\newcommand{\ncdof}{\texttt{ncdof}\xspace}
\newcommand{\nnze}{\texttt{nnze}\xspace}
\newcommand{\ndof}{\texttt{ndof}\xspace}
\newcolumntype{L}[1]{>{\raggedright\let\newline\\\arraybackslash\hspace{0pt}}m{#1}}
\newcolumntype{C}[1]{>{\centering\let\newline\\\arraybackslash\hspace{0pt}}m{#1}}
\newcolumntype{R}[1]{>{\raggedleft\let\newline\\\arraybackslash\hspace{0pt}}m{#1}}
\newenvironment{eqs} %
 { \begin{equation} \begin{aligned} } %
 { \end{aligned} \end{equation} \ignorespacesafterend } %
\declaretheorem[numberwithin=section]{theorem}
\declaretheorem[sibling=theorem]{lemma}
\declaretheorem[sibling=theorem]{corollary}
\declaretheorem[sibling=theorem]{remark}
\declaretheorem[name=Assumption]{assumption}
\crefname{theorem}{theorem}{theorems}
\Crefname{theorem}{Theorem}{Theorems}
\crefname{lemma}{lemma}{lemmas}
\Crefname{lemma}{Lemma}{Lemmas}
\crefname{proposition}{proposition}{propositions}
\Crefname{proposition}{Proposition}{Propositions}
\crefname{corollary}{corollary}{corollaries}
\Crefname{corollary}{Corollary}{Corollaries}
\crefname{definition}{definition}{definitions}
\Crefname{definition}{Definition}{Definitions}
\crefname{example}{example}{examples}
\Crefname{example}{Example}{Examples}
\crefname{remark}{remark}{remarks}
\Crefname{remark}{Remark}{Remarks}
\crefname{assumption}{assumption}{assumptions}
\Crefname{assumption}{Assumption}{Assumptions}
\let\cref\Cref
\newcommand{\bpm}{\begin{pmatrix}}
\newcommand{\epm}{\end{pmatrix}}
\newcommand{\V}{{\mathbb{V}}}
\newcommand{\W}{{\mathbb{W}}}
\newcommand{\Vh}{{\mathbb{V}_h}}
\newcommand{\Wh}{{\mathbb{W}_h}}
\newcommand{\Vsh}{\hyperlink{def:Vsh}{\V_{\!\star \hspace*{-0.2mm}h}}}
\newcommand{\Wsh}{W_{\!\star \hspace*{-0.2mm}h}}
\newcommand{\Tnorm}[1]{|\!|\!|#1|\!|\!|}
\newcommand{\normDG}[1]{\norm{#1}_{V_h}}
\newcommand{\TnormDG}[1]{{\Vert#1\Vert}_{\Vsh}}
\newcommand*{\abs}[1]{\left|#1\right|}
\newcommand{\jmp}[1]{[\![ #1 ]\!]}
\newcommand{\avg}[1]{\{\!\!\{#1\}\!\!\}}
\newcommand{\Th}{{\calK_h}}
\newcommand{\Fh}{{\calF_h}}
\newcommand{\Fhi}{{\calF_h^i}}
\newcommand{\nf}{{n_F}}
\newcommand{\alphamin}{{\alpha_{\min}}}
\newcommand{\VKh}{\mathbb{V}_h(K)}
\newcommand{\Bh}{\hyperlink{def:Bh}{B_h}}
\newcommand{\Wth}{\hyperlink{def:Wth}{\tilde{W}_h}}
\newcommand{\Uh}{\hyperlink{def:Uh}{\mathbb{L}_h}}
\newcommand{\Uhnl}{\mathbb{L}_h}
\newcommand{\UKh}{\hyperlink{def:Uh}{\mathbb{L}_h(K)}}
\newcommand{\QT}{\hyperlink{def:QT}{\mathbb{Q}(\Th)}}
\newcommand{\QTp}{\hyperlink{def:QTp}{\mathbb{Q}(\Th)'}}
\newcommand{\QTh}{\hyperlink{def:QTh}{\mathbb{Q}_h(\Th)}}
\newcommand{\QThp}{\hyperlink{def:QThp}{\mathbb{Q}_h(\Th)'}}
\newcommand{\AK}{A_K}
\newcommand{\ATh}{\hyperlink{def:ATh}{A_{\Th}}}
\newcommand{\AThnorm}[2]{\hyperref[eq:AThnorm]{\vert} #1 \hyperref[eq:AThnorm]{\vert_{A_{\Th}}^{#2}}}
\newcommand{\ahnorm}[2]{\hyperref[eq:ahnorm]{\vert} #1 \hyperref[eq:ahnorm]{\vert_{a_h}^{#2}}}
\newcommand{\Bhnorm}[2]{\hyperref[eq:Bhnorm]{\Vert} #1 \hyperref[eq:Bhnorm]{\Vert_{B_h}^{#2}}}
\newcommand{\QK}{\mathbb{Q}(K)}
\newcommand{\QKh}{\mathbb{Q}_h(K)}
\DeclareMathOperator{\tc}{T}
\DeclareMathOperator{\tch}{\hyperlink{def:tch}{T_h}}
\DeclareMathOperator{\Div}{div}
\DeclareMathOperator{\Lap}{\Delta}
\DeclareMathOperator{\id}{id}
\DeclareMathOperator{\diam}{diam}
\DeclareMathOperator{\diag}{diag}
\renewcommand{\dim}{\operatorname{dim}}
\newcommand{\inner}[1]{( #1 )}
\newcommand*{\norm}[1]{\left\|#1\right\|}
\newcommand\restr[2]{{ \left.\kern-\nulldelimiterspace #1 \vphantom{\big|} \right|_{#2} }}
\newcommand{\IH}{\mathbb{H}}
\newcommand{\IL}{\mathbb{L}}
\newcommand{\IN}{\mathbb{N}}
\newcommand{\IP}{\mathbb{P}}
\newcommand{\IR}{\mathbb{R}}
\newcommand{\IT}{\mathbb{T}}
\newcommand{\ITh}{\hyperlink{def:ITh}{\mathbb{T}_h}}
\newcommand{\calF}{\mathcal{F}}
\newcommand{\calH}{\mathcal{H}}
\newcommand{\calK}{\mathcal{K}}
\newcommand{\calT}{\mathcal{T}}
\newcommand{\bi}{\mathbf{i}}
\newcommand{\bell}{{\boldsymbol{\ell}}}
\newcommand{\bA}{\mathbf{A}}
\newlength{\squeezedstackrelawd}
\newlength{\squeezedstackrelbwd}
\def\squeezedstackrel#1#2{\settowidth{\squeezedstackrelawd}%
{${{}^{#1}}$}\settowidth{\squeezedstackrelbwd}{$#2$}%
\addtolength{\squeezedstackrelawd}{-\squeezedstackrelbwd}%
\leavevmode\ifthenelse{\lengthtest{\squeezedstackrelawd>0pt}}%
{\kern-.5\squeezedstackrelawd}{}\mathrel{\mathop{\vphantom{|}#2}\limits^{\text{\footnotesize$\underbrace{#1}$}}}\ifthenelse{\lengthtest{\squeezedstackrelawd>0pt}}%
{\kern-.5\squeezedstackrelawd}{}}
\definecolor{pscol}{rgb}{0.8,0,0}
\pgfplotsset{compat=newest}
\pgfplotsset{
	discard if not/.style 2 args={
	x filter/.append code={
	\edef\tempa{\thisrow{#1}}
	\edef\tempb{#2}
	\ifx\tempa\tempb
	\else
	
	\fi
	}
	}
}
	\def\pgfplotstable@loc@TMPd{\pgfplotstablegetelem{##1}{#1}\of}
	\edef\tempa{\pgfplotsretval}
	\edef\tempb{#2}
\title{
Embedded Trefftz DG framework for the analysis of discretizations with local-global decompositions
}
\author{ 
Philip L. Lederer\thanks{Faculty of Mathematics, Informatics and Natural Sciences, University of Hamburg, Germany} 
\and 
Christoph Lehrenfeld\thanks{Institut für Numerische und Angewandte Mathematik, Georg-August Universität Göttingen, Germany}
\and 
Paul Stocker\thanks{Faculty of Mathematics, University of Vienna, Austria} 
\and 
Igor Voulis\footnotemark[2]
}
\date{}
\begin{document}
\maketitle

\begin{abstract}
This paper presents a framework for the analysis of discretization methods based on the decomposition into local and global problems. 
We apply the framework to provide a comprehensive error analysis for the embedded Trefftz discontinuous Galerkin method, for a wide range of second-order scalar elliptic partial differential equations and a scalar reaction-advection problem.
We also analyze quasi-Trefftz methods with our framework, presenting the first optimal error bounds in weaker norms.
\end{abstract}

\noindent\textbf{Keywords:} {Trefftz methods, discontinuous Galerkin methods, error analysis, embedded Trefftz, quasi-Trefftz}\\
\noindent\textbf{MSC:} 
65N12, 
65N30, 
65M12, 
65M60, 
65J05 

\section{Introduction}
Trefftz methods, named after Erich Trefftz \cite{trefftz1926}, are a class of numerical methods that use solutions of the governing linear partial differential equation (PDE) as basis functions for the discretization.
While various approaches exist for implementing this ansatz, it is particularly well-suited for application within the framework of discontinuous Galerkin (DG) methods. In this context, the Trefftz space is utilized as the local test and trial spaces, see e.g. \cite{HMPS14,mope18,bgl2016,bcds20,SpaceTimeTDG,KSTW2014,PSSW_CMA_2020,HLSW_M2AN_2022,LLS_NM_2024,GMPS_AML_2023,gomo22}.
Consequently, it offers a promising alternative to other approaches for reducing unknowns, especially in the context of DG methods.
On polytopal meshes strong advantages over other DG approaches
such as standard DG, Hybrid-DG and Hybrid High Order methods as well as the Virtual Element Method are observed, see \cite{LSZ_PAMM_2024}.
Unfortunately, depending on the governing PDE, the resulting (local) Trefftz spaces are not always polynomial.
In most cases in the literature only PDE problems with constant coefficients and homogeneous right-hand sides are considered as only then  
a suitable Trefftz space can be constructed. 

Efforts have been made to extend the Trefftz paradigm to more general PDEs, including non-constant coefficients and non-zero right-hand sides by relaxing the condition on the discrete spaces: the quasi-Trefftz method and the embedded Trefftz method.
Both approaches allow to extend the class of PDE problems that can efficiently be treated by Trefftz DG methods significantly.

The quasi-Trefftz method, see \cite{imbert2014generalized,IGMS_MC_2021,2408.00392,gomo24}, relies on a relaxation of the Trefftz condition, i.e. that basis functions are solutions of the governing PDE, by demanding a condition based on the Taylor expansion of the PDE coefficients and the right-hand side on selected points.

The recently proposed embedded Trefftz method \cite{LS_IJMNE_2023, lozinski19} introduces a more general relaxation of the Trefftz condition based on projections, resulting in \emph{weak Trefftz spaces}, and only requires easy-to-compute local problems to be solved, while completely avoiding the explicit construction of Trefftz functions.
The embedded Trefftz method has been successfully applied to a wide range of PDEs, including Poisson equation, acoustic wave equation, Helmholtz equation, and a linear transport equation \cite{LS_IJMNE_2023}, as well as Stokes equation \cite{LLS_NM_2024}.
However, the analysis of the embedded Trefftz method is still in its infancy, and only a few error estimates are available. 
See \cite{lozinski19} for the Poisson problem with varying coefficients, and \cite{LLS_NM_2024} for the Stokes problem with inhomogeneous right-hand sides but constant coefficients.

The earlier work \cite{lozinski19} by A. Lozinski already provides a method similar to \cite{LS_IJMNE_2023} under the name of `discontinuous Galerkin method with static condensation' for the Poisson problem with varying coefficients. 
    It includes an a-priori error analysis for this case.
    Up to now, this work has sadly been overlooked by the Trefftz community.
    One key difference between the methods is the construction of the local spaces, which in \cite{lozinski19} are constructed by solving local problems and orthonormalizing the basis vectors using Gram-Schmidt.

Across all three approaches, the full problem splits into element-wise local problems and a global problem formulated on the respective global -- respectively, generalized \emph{Trefftz} -- space.

\paragraph{Contributions.}
It turns out that the overall setting of the different approaches can be unified in a general framework.
The main contribution of this work is as follows:

\begin{itemize}

    \item We develop a general analysis framework for discretization methods with a local-global decomposition and provide precise assumptions for the a-priori error analysis, along with tools to verify them.
    Crucially, the framework derives its error bounds from the approximation properties of the full discrete function space -- comprising both the collection of local spaces and the global (or \emph{Trefftz}) space -- rather than from the Trefftz space alone.

    \item The framework enables a unified analysis of embedded Trefftz DG methods, revealing their structure and supporting extensions to more complex settings. 
        It allows us to extend the analysis in \cite{lozinski19} to a wider class of PDEs.
        Based on the construction of a stable space decomposition, we propose a generalization of the embedded Trefftz DG method.

    \item Applying the framework to the quasi-Trefftz DG approach we provide new error bounds in weaker norms.
        Compared to existing analysis, we avoid restrictive interpolation arguments used in standard Trefftz analyses, as the error bounds in the framework lead to optimal error bounds in standard polynomial spaces.
\end{itemize}

The framework applies broadly to Trefftz DG and other methods that can be decomposed into local and global problems.
It lays the groundwork for future (embedded) Trefftz DG generalizations and applications to more complex PDEs.
Its application to Trefftz collocation methods and other spectral schemes \cite{llhc08,kk95} is limited and not the focus of this work.

We emphasize that the strength of the framework lies in the derivation of error bounds by the best approximation of an \emph{underlying discrete space}, such as the full polynomial space.
This is paramount whenever \emph{best approximation estimates for the Trefftz space are not available}, e.g. for the embedded Trefftz DG method.

\paragraph{Outline.}
We start by motivating the framework, introducing several methods that fit into it, in \cref{sec:Tlike}.
Specific choices and possible constructions of a splitting of the discretization space into local and global parts are discussed.
Especially, several variants of \emph{Trefftz} DG methods are discussed.

The theoretical backbone of the paper is presented in \cref{sec:framework} where we introduce the general framework for discretization methods that can be decomposed into two parts in the following manner: 
One part consists of a set of \emph{local} subproblems and corresponding local unknowns, both associated to elements in an index set, typically the elements in a computational mesh. 
The remainder consists of a \emph{global} problem with globally coupled unknowns.

In our general approach we assume that both problems are coupled with each other and form a $2\times 2$ block system.
However, in many cases, such as the embedded Trefftz DG method, the problems can be decoupled, i.e. the local subproblems can be solved independently of the global problem.

The key result in \cref{sec:framework} is the stability analysis for this coupled system (\cref{thm:Tcoercivity:coupled}) which is based on three essential assumptions of the framework: 
\begin{itemize}
\item Stability of the local subproblems (\cref{ass:local}).
\item Stability of the global problem (\cref{ass:assumption_Th}).
\item An assumption on a sufficiently weak coupling between the local and global problems -- at least in one direction (\cref{ass:inexacttrefftz}).
\end{itemize}
Depending on the discretization setting these three assumptions can be difficult to verify, especially the stability of the local subproblems (\cref{ass:local}). 

In \cref{sec:tools} we hence present a set of more accessible sufficient conditions that allow to deduce the stability of the local subproblems. 
The main components of the analysis covered in \cref{sec:framework,sec:tools} and their interdependence is outlined in \cref{fig:overview}.
\begin{figure}[!tb]
    \begin{center}
        \scalebox{0.69}{\pgfdeclarelayer{background}%
\pgfsetlayers{background,main}%
\begin{tikzpicture}[
    line/.style={draw, -latex'},
    block/.style={rectangle, draw, fill=gray!20, text width=8em, text centered, minimum height=6em}
]

\node[block, fill=green!20] (A2) {
stability of local problems \\[1ex]    
(\cref{ass:local})
};

\node[block, fill=blue!20, below=of A2] (A4) {
stability of global problems \\[1ex]
(\cref{ass:assumption_Th}) 
};

\node[block, fill=orange!20, below=of A4] (A6) {
sufficiently weak coupling \\[1ex]
(\cref{ass:inexacttrefftz})
};

\node[block, fill=red!20, right=1cm of A2] (Bhstab) { 
    stability of coupled problem \\[1ex] 
    (\cref{thm:Tcoercivity:coupled})
};

\node[block, fill=blue!20, below=of Bhstab] (A5) {
continuity of global operator \\[1ex]    
(\cref{ass:cont_Th})
};

\node[block, fill=green!20, below=of A5] (A3) {
    continuity of local operator \\[1ex]   
    (\cref{ass:local_strong})
};

\node[block, fill=red!20, right=1cm of $(Bhstab.south east)!0.5!(A5.north east)$] (Cea) { 
    Strang-type result \\[1ex]
    (\cref{cor:cea})
};


\node[block, fill=green!20, left=1cm of A2] (eq20a) {
local decomposition of local part \\[1ex]
\eqref{eq:quasiortho:a}
};

\node[block, fill=green!20, below=of eq20a] (eq20b) {
continuous space decomposition of local part \\[1ex]
    \eqref{eq:quasiortho:b} 
};

\node[block, fill=green!20, below=of eq20b] (eq20c) {
local stability of local problems \\[1ex]
\eqref{eq:quasiortho:c} 
};

\node[block, fill=orange!20, left=1cm of eq20a] (A8) {
locality of local operator \\[1ex]
(\cref{ass:locality})
};

\node[block, fill=orange!20, below=of A8] (DG) {
DG setting \\[1ex]
(\cref{ssec:DGsetting}) 
};

\node[block, fill=green!20, below=of DG] (AK0) {
    Existence of suitable prototype operator \\[1ex]
    (\cref{lem:abstractneumann})
};





\begin{pgfonlayer}{background}
\node[fill=gray!0, scale=1, below=0.02cm of $(eq20c.south west)!0.5!(AK0.south east)$] (tools) { 
    \cref{sec:tools}
};
\node[fill=gray!0, scale=1, below=0.02cm of $(A3.south)$] (framework) { 
    \cref{sec:framework}
};
\node[draw=gray, dashed, fill=violet, opacity=0.1, draw opacity=1.0,fit=(A2) (A3) (A4) (A5) (A6) (Bhstab) (Cea),inner sep=0.5cm] (G1) {};
\node[draw=gray, dashed, fill=teal, opacity=0.15, draw opacity=1.0, fit=(DG) (AK0) (eq20a) (eq20b) (eq20c),inner sep=0.5cm] (eq20) {};
\end{pgfonlayer}


\node[block, fill=green!20, minimum height=2em, below=.25cm of $(eq20.290)$] (legend1) { local problems };
\node[block, fill=blue!20, minimum height=2em, right=.5cm of $(legend1.east)$] (legend2) { global problems };
\node[block, fill=orange!20, minimum height=2em, right=.5cm of $(legend2.east)$] (legend3) { coupling};
\node[block, fill=red!20, minimum height=2em, right=.5cm of $(legend3.east)$] (legend4) { result };
\node[block, fill=none, text width=19.5em, draw=none, minimum height=2em, left=.5cm of $(legend1.east)$] (legend0) { \textbf{legend:} };

\path[line] (A2.east) to[in=180,out=0] (Bhstab.170);
\path[line] (A4.20) to[in=210,out=60] (Bhstab.190);
\path[line] (A6.east) to[in=230,out=65] (Bhstab.210);

\path[line] (Bhstab.east) to[in=195,out=-10] (Cea.170);
\path[line] (A5.east) to[in=210,out=45] (Cea.190);
\path[line] (A3.east) to[in=230,out=60] (Cea.210);

\path[line] (eq20a.east) to[in=180,out=0]  (A2.170);
\path[line] (eq20b.20) to[in=210,out=60] (A2.190);
\path[line] (eq20c.east) to[in=230,out=65] (A2.210);

\path[line] (A8.0) to[] (eq20a.180);
\path[line] (DG.0) to[] (eq20b.180);

\path[line] (AK0.0) to[in=180,out=0] (eq20c.180);

\end{tikzpicture}}
    \end{center} 
    \vspace*{-0.6cm}
    \caption{Overview of the main components in the analysis of the considered framework.} \label{fig:overview}
\end{figure}
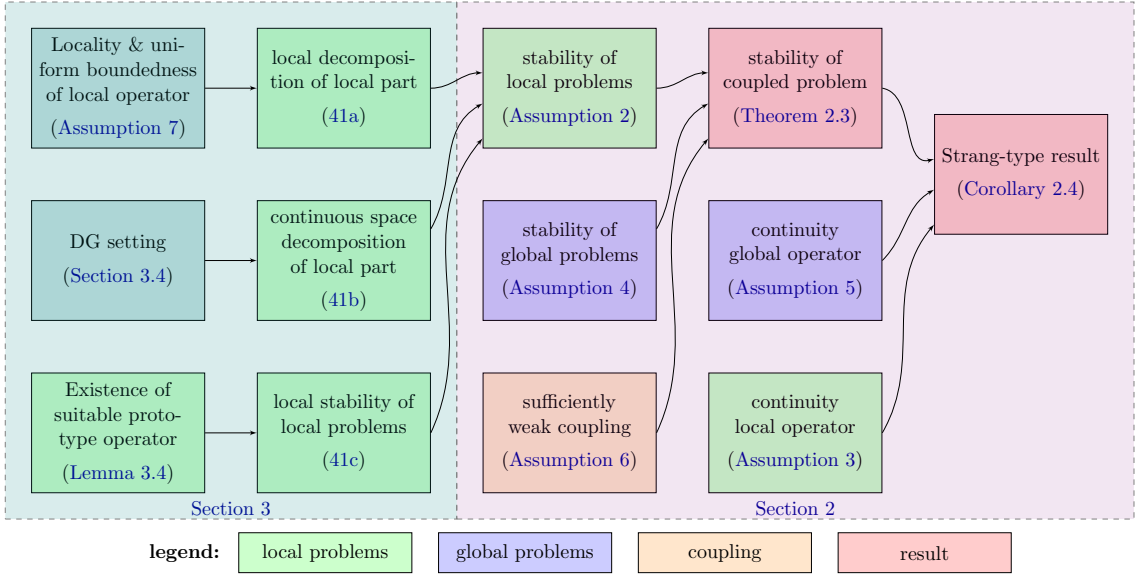

While the discussion of methods within the framework in \cref{sec:Tlike} is on a rather conceptual level, in \cref{sec::examples} we consider several applications of Trefftz DG methods.
We verify the assumptions of the framework and hence derive optimal a-priori error bounds for concrete PDE problems.

\section{Examples of methods within the framework} \label{sec:Tlike}

In this section we discuss methods that fit naturally within the presented framework.
The popularity of such methods stems from the reduction of unknowns by considering suitable subspaces of well known CG and DG spaces.
By applying the results of this work, these methods inherit the optimal approximation properties of the underlying spaces.

To motivate the methods, we consider a mesh $\Th$ consisting of non-overlapping elements $K$, more precise assumptions on the mesh are given in \cref{sec::examples}.
In this section we consider $\Vh$ to be the piecewise polynomial space of degree $p$, $\Vh = \{v\in L^2(\Omega) \mid v|_K \in \IP^p(K),\ \forall K\in\Th\}$.

\subsection{Motivation} 

We want to briefly motivate our interest in Trefftz methods by comparing the number of degrees of freedom (\ndof) and the number of non-zero matrix entries (\nnze) in the linear system for several DG-based methods in \cref{tab:motivation}, even though we are of course aware that these measures reflect only one of several aspects relevant to the performance of a discretization method. An in-depth comparison in terms of degrees of freedom and number of non-zero entries between Trefftz DG methods and other polytopal methods such as standard DG, Virtual Element Method, and Hybrid-DG can be found in \cite{LSZ_PAMM_2024}. For methods involving static condensation\footnote{cf. \cref{ssec:staticcond} for a brief explanation of static condensation} the numbers are computed for the condensed system where we denote the remaining number of coupling degrees of freedom as \ncdof.

The methods considered are standard discontinuous Galerkin (DG), Trefftz DG (TDG), hybrid DG (HDG) methods.
We consider two Trefftz DG versions, one for a first order PDE operator (TDG1), e.g. for an advection-reaction equation and one for a second order PDE operator (TDG2), e.g. for the diffusion equation.

For the DG method we further distinguish two realizations w.r.t. the choice of degrees of freedom or basis functions, respectively.  
In the realization denoted as standard DG (DG), we assume the worst case in terms of couplings, i.e. every degree of freedom is coupled with all element degrees of freedom of the same and all neighboring elements.
Also for the Trefftz DG methods we assume this worst case.
In the \emph{compact} DG method, cf.  
\cite{zbMATH05582044}, the number of basis functions with support on facets is minimized which minimizes the couplings across element interfaces alongside. 
Element bubbles are then condensated.

\begin{table}[!ht]
\centering
\begin{NiceTabular}{L{2.65cm}@{}R{1.0588235294117647cm}@{}R{1.1176470588235294cm}@{}R{1.1764705882352942cm}@{}R{1.2352941176470589cm}@{}R{1.2941176470588236cm}@{}R{1.3529411764705883cm}@{}R{1.4117647058823528cm}@{}R{1.4705882352941178cm}@{}R{1.5294117647058822cm}@{}}[colortbl-like]
    \toprule
\text{method}$~\!\downarrow$ $\!\setminus\!~k\!\! \rightarrow$ & \multicolumn{1}{r}{1}& \multicolumn{1}{r}{2}& \multicolumn{1}{r}{3}& \multicolumn{1}{r}{4}& \multicolumn{1}{r}{5}& \multicolumn{1}{r}{6}& \multicolumn{1}{r}{7}& \multicolumn{1}{r}{8}& \multicolumn{1}{r}{9}\\
   \midrule
$\texttt{ndof}_{\text{DG}}/N_{\text{El}}$&         4\hphantom{.0} &         10\hphantom{.0} &         20\hphantom{.0} &         35\hphantom{.0} &         56\hphantom{.0} &         84\hphantom{.0} &        120\hphantom{.0} &        165\hphantom{.0} &        220\hphantom{.0}\\
\rowcolor{gray!20}$\texttt{ncdof}_{\text{CDG}}/N_{\text{El}}$&         4\hphantom{.0} &         10\hphantom{.0} &         20\hphantom{.0} &         34\hphantom{.0} &         52\hphantom{.0} &         74\hphantom{.0} &        100\hphantom{.0} &        130\hphantom{.0} &        164\hphantom{.0}\\
$\texttt{ncdof}_{\text{HDG}}/N_{\text{El}}$&         6\hphantom{.0} &         12\hphantom{.0} &         20\hphantom{.0} &         30\hphantom{.0} &         42\hphantom{.0} &         56\hphantom{.0} &         72\hphantom{.0} &         90\hphantom{.0} &        110\hphantom{.0}\\
\rowcolor{gray!20}$\texttt{ndof}_{\text{TDG2}}/N_{\text{El}}$&         4\hphantom{.0} &          9\hphantom{.0} &         16\hphantom{.0} &         25\hphantom{.0} &         36\hphantom{.0} &         49\hphantom{.0} &         64\hphantom{.0} &         81\hphantom{.0} &        100\hphantom{.0}\\
$\texttt{ndof}_{\text{TDG1}}/N_{\text{El}}$&         3\hphantom{.0} &          6\hphantom{.0} &         10\hphantom{.0} &         15\hphantom{.0} &         21\hphantom{.0} &         28\hphantom{.0} &         36\hphantom{.0} &         45\hphantom{.0} &         55\hphantom{.0}\\
\rowcolor{gray!20}$\texttt{ncdof}_{\text{FEM}}/N_{\text{El}}$&       0.2 &        3.3 &        8.5 &       15.7 &       24.8 &         36\hphantom{.0} &       49.2 &       64.3 &       81.5\\
    \bottomrule
\end{NiceTabular}

\begin{NiceTabular}{L{2.65cm}@{}R{1.0588235294117647cm}@{}R{1.1176470588235294cm}@{}R{1.1764705882352942cm}@{}R{1.2352941176470589cm}@{}R{1.2941176470588236cm}@{}R{1.3529411764705883cm}@{}R{1.4117647058823528cm}@{}R{1.4705882352941178cm}@{}R{1.5294117647058822cm}@{}}[colortbl-like]
    \toprule
\text{method}$~\!\downarrow$ $\!\setminus\!~k\!\! \rightarrow$ & \multicolumn{1}{r}{1}& \multicolumn{1}{r}{2}& \multicolumn{1}{r}{3}& \multicolumn{1}{r}{4}& \multicolumn{1}{r}{5}& \multicolumn{1}{r}{6}& \multicolumn{1}{r}{7}& \multicolumn{1}{r}{8}& \multicolumn{1}{r}{9}\\
   \midrule
$\texttt{nnze}_{\text{DG}}/N_{\text{El}}$&        80\hphantom{.0} &        500\hphantom{.0} &       2000\hphantom{.0} &       6125\hphantom{.0} &      15680\hphantom{.0} &      35280\hphantom{.0} &      72000\hphantom{.0} &     136125\hphantom{.0} &     242000\hphantom{.0}\\
\rowcolor{gray!20}$\texttt{nnze}_{\text{CDG}}/N_{\text{El}}$&        64\hphantom{.0} &        340\hphantom{.0} &       1200\hphantom{.0} &       3196\hphantom{.0} &       7072\hphantom{.0} &      13764\hphantom{.0} &      24400\hphantom{.0} &      40300\hphantom{.0} &      62976\hphantom{.0}\\
$\texttt{nnze}_{\text{HDG}}/N_{\text{El}}$&       126\hphantom{.0} &        504\hphantom{.0} &       1400\hphantom{.0} &       3150\hphantom{.0} &       6174\hphantom{.0} &      10976\hphantom{.0} &      18144\hphantom{.0} &      28350\hphantom{.0} &      42350\hphantom{.0}\\
\rowcolor{gray!20}$\texttt{nnze}_{\text{TDG2}}/N_{\text{El}}$&        80\hphantom{.0} &        405\hphantom{.0} &       1280\hphantom{.0} &       3125\hphantom{.0} &       6480\hphantom{.0} &      12005\hphantom{.0} &      20480\hphantom{.0} &      32805\hphantom{.0} &      50000\hphantom{.0}\\
$\texttt{nnze}_{\text{TDG1}}/N_{\text{El}}$&        45\hphantom{.0} &        180\hphantom{.0} &        500\hphantom{.0} &       1125\hphantom{.0} &       2205\hphantom{.0} &       3920\hphantom{.0} &       6480\hphantom{.0} &      10125\hphantom{.0} &      15125\hphantom{.0}\\
\rowcolor{gray!20}$\texttt{nnze}_{\text{FEM}}/N_{\text{El}}$&       2.5 &      103.5 &        488\hphantom{.0} &       1408\hphantom{.0} &     3187.5 &     6222.5 &      10981\hphantom{.0} &      18003\hphantom{.0} &    27900.5\\
    \bottomrule
\end{NiceTabular}

\caption{\ncdof and \nnze per element for different methods on periodic tetrahedral mesh for
$k\!\!=\!\!1,\!..,\!10$ (rounded up to one decimal place).}
\label{tab:motivation}
\end{table}

In \cref{tab:motivation} we present a comparison of the \ndof on 
a tetrahedral mesh for problems with periodic boundary condition.
While all methods considered reduce the order of the dimension of the standard DG method from $\mathcal{O}(p^d)$ to $\mathcal{O}(p^{d-1})$ the constant varies significantly.
Clearly HDG and Trefftz DG methods are the most efficient methods in terms of \ndof behind FEM.

\subsection{Classical polynomial Trefftz DG}

Traditionally, Trefftz DG methods are considered for homogeneous PDEs, where the right-hand side is zero. 
Polynomial Trefftz DG methods reduce the standard piecewise polynomial space $\Vh$ by considering only basis functions that are locally in the kernel of the differential operator of the PDE.
Then, the Trefftz space on a mesh element $K$ is given by
\begin{equation}\label{eq:ITh}
    \IT_h = \{ u \in \Vh \text{ s.t. } \AK  \restr{u}{K} = 0\quad \forall K\in\Th\},
\end{equation}
where $A_K$ denotes the (local) differential operator.
As an example, consider the Laplace equation $-\Delta u = 0$, then $A_K = -\Delta|_K$. Hence $\IT_h$ is given by the harmonic polynomials. 

Our framework allows to analyze the approximation properties of these Trefftz spaces using the best approximation in $\Vh$.

\begin{remark}[Plane wave DG]
    A popular application of Trefftz methods are time-harmonic wave problems, 
    see the survey \cite{TrefftzSurvey}.
    There, the Trefftz space $\IT_h$ is given by non-polynomial wave functions. 
    This space is not know to be a subspace of a larger space $\Vh$ with well-studied approximation properties.    
    Thus, our framework does not immediately provide any additional insight.
\end{remark}

\subsection{Embedded Trefftz DG}\label{sec:embtrefftz}

The embedded Trefftz method, presented in \cite{LS_IJMNE_2023}, solves a Trefftz DG problem by constructing an embedding into a standard polynomial DG method. Earlier, in \cite{lozinski19}, a very similar method was proposed under the name of discontinuous Galerkin method with static condensation.
The method can be applied to challenging PDE operators of varying order and non-constant coefficients by constructing the embedding for a Trefftz-like space with a weaker Trefftz property.
In \cite{LS_IJMNE_2023} promising numerical results for Laplace equation, Poisson equation, acoustic wave equation with piecewise constant and also with smooth coefficient, Helmholtz equation, and a linear transport equation are presented.
In \cite{lozinski19} the method was applied to the Poisson problem with varying coefficients and an a-priori error analysis was provided for this case.

Similar to \eqref{eq:ITh}, the method solves the PDE over (weak) Trefftz spaces $\IT_h$ given by 
\begin{equation}\label{eq:IThproto}
    \IT_h = \{ u \in \Vh \text{ s.t. } \langle \AK  u, q \rangle_K = 0\quad \forall q \in \QKh,\ K\in\Th\},
\end{equation}
for a suitably chosen space $\QKh$, which is typically a polynomial space of lower order than the polynomial space $\Vh(K)$.
The operator $\AK $ is the local differential operator of the PDE.
The pairing $\langle \cdot,\cdot\rangle_K$ is a suitable dual pairing on the element $K$, usually the $L^2$-inner product.

If the choice $\QKh = \AK \Vh(K)$ leads to suitable Trefftz space, then this method is equivalent to the classical Trefftz DG method. By relaxing the condition with $\QKh$, the embedded Trefftz method generalizes the classical Trefftz DG method to a wider class of PDEs, including non-constant coefficients. 

Furthermore, the embedded Trefftz method allows a non-vanishing right-hand side. In this case, a suitable affine shift of the Trefftz space is constructed by solving local problems on each mesh element $K$. This is done by taking a local complement $\IL_h(K)$ of the Trefftz space $\IT_h$ in the polynomial space $\Vh$ for each element $K\in\Th$, and solving the local problem
\[
    \langle \AK  u, q \rangle_K = \langle \ell_K, q \rangle_K\quad \forall q \in \QKh,\ K\in\Th,
\]
where $\ell_K$ is the restriction of the right-hand side to the element $K$.

\subsection{Quasi-Trefftz DG}\label{sec:qtrefftz}
Quasi-Trefftz DG methods, see e.g. \cite{yang2020trefftz,IGMS_MC_2021,2408.00392}, are another generalization of the Trefftz DG method to PDEs with varying coefficients and right-hand sides. 
Basis functions for a quasi-Trefftz spaces $\IT_h$ and solutions of the local problems are constructed methodically to satisfy
\begin{equation}\label{eq:qtspace}
    \IT_h =\big\{ u\in \Vh \text{ s.t. } D^{\bi} A_K u (x_K) = 0\quad  \forall \bi\in \IN^d_0,\ |\bi|\leq p-m,\ K\in\Th \big\},
\end{equation}
where $A_K$ is the local differential operator of the PDE and $m$ is the order of the PDE.
Here $D^{\bi}$ is the partial derivative with respect to the multi-index\footnote{  Multi-indices are denoted $\bi:=(i_{1},\ldots,i_{d}) \in \IN_0^{d}$ and their length $|\bi|:=i_{1}+\cdots+i_{d}$.} $\bi$.
Here the Trefftz-condition is weakened by requiring that a truncated Taylor expansion of the PDE operator $A_K$ vanishes at a given point $x_K$ in the element $K$.

Like the embedded Trefftz method, the quasi-Trefftz method allows to treat non-homogeneous PDEs by solving local problems on each element $K$. In this case, the local problem reads
\[
D^{\bi} A_K u_{\IL,K}(x_K)  = D^\bi f(x_K) \quad \forall \bi\in \IN^d_0,\ |\bi|\leq p-m,\ K\in \Th, 
\]
which is solved by a recursive procedure which relies on the Taylor expansion of the PDE operator $A_K$ and the right-hand side $f$.

\subsection{A new perspective on static condensation} \label{ssec:staticcond}
The static condensation method is a technique to reduce the size of the linear system of equations by eliminating the degrees of freedom associated with the interior of the elements and is commonly used in the context of classical (continuous and mixed) finite element methods, see \cite{boffi2013mixed}, or e.g. hybridized discontinuous Galerkin methods, see \cite{MR3585789}.
The main idea is to eliminate degrees of freedom by considering local problems and to assemble a global system of equations only for the remaining degrees of freedom.

Classically, this is done on a linear algebra level using a Schur complement strategy.
Ordering unknowns by interior (i) and skeleton/interface (s) degrees of freedom yields
\begin{subequations}
    \label{eq:staticcond:linalg}
\begin{equation}
\left(
    \begin{matrix}
A_{ii} & A_{is} \\
A_{si} & A_{ss}
\end{matrix}
\right)
\cdot
\left(
\begin{matrix}
u_i \\ u_s
\end{matrix}
\right)
=
\left(
\begin{matrix}
f_i \\ f_s
\end{matrix}
\right).
\label{eq:staticcond:linalg1}
\end{equation}
Here, $A_{ii}$ is block diagonal and cheap to invert so that we can express $u_i = u_i^f + u_i^h$ with $u_i^f = A_{ii}^{-1} f_i$ the load-driven bubble part and $u_i^h = - A_{ii}^{-1} A_{is} u_s$ the suitable discrete \emph{harmonic extension} of the skeleton unknowns to the interior. Plugging this identity into the equation for $u_s$ results in the Schur complement equation
\begin{equation}
\label{eq:staticcond:linalg2}
S\,u_s = g,\quad \text{ with }\quad S := A_{ss} - A_{si}A_{ii}^{-1}A_{is} \quad \text{ and } \quad g := f_s - A_{si}A_{ii}^{-1}f_i.
\end{equation}
\end{subequations}
After the solution for the skeleton unknowns $u_s$, the interior unknowns can then be recovered by solving the independent local problems for the harmonic extension and the load-driven bubble part.

We now present an algebraically equivalent approach that reflects the local–global decomposition by means of the local space $\IL_h$ and the global space $\IT_h$.
The global degrees of freedom span a Trefftz space $\IT_h$, namely the space of \emph{harmonic extensions} of the skeleton functions.

To demonstrate this in more detail we consider a classical continuous finite element method applied to a PDE with a self-adjoint differential operator. If we take $\Vh$ to be the space of continuous finite element functions, then the space allows the splitting $ \Vh = \IL_h \oplus \IT_h$.
Here, the global space is given by 
\begin{equation}
    \IT_h = \{ v_h \in \Vh \text{ s.t. } \langle A_K v_h, q_h \rangle_K = 0\quad \forall q_h \in \QKh,\ K\in\Th \} ,
\end{equation}
where $\QKh= \{ q_h \in \Vh \text{ s.t. } q_h(x) = 0 \text{ for all } x \notin K\}$ are the bubbles and $A_K$ is the (local) differential operator of the PDE.
Note that $\IT_h$ is the orthogonal complement of the sum of the local spaces $\IL_h = \bigoplus_{K}\QKh$ with respect to the scalar product induced by the PDE operator. 

This splitting allows one to solve for $u_h=u_\IL + u_\IT$, with $u_\IL \in \IL_h$ and $u_\IT \in \IT_h$.
The local component $u_\IL$ is obtained by solving the independent local problems
\begin{align*}
    \langle A_K u_\IL, q_h \rangle_K = \langle \ell_K, q_h \rangle_K \quad \forall q_h \in \QKh,
\end{align*}
on each element $K\in\Th$, where $\ell_K$ is the restriction of the right-hand side to the element $K$.
The global component $u_\IT$ is then obtained by solving a global problem on $\IT_h$; since the basis functions of $\IT_h$ already incorporate the discrete harmonic extension from the skeleton, no separate extension step is required after the global solve.

In terms of the linear algebra objects in \eqref{eq:staticcond:linalg}, if we reinterpret the block structure in the orthogonal decomposition basis (i.e., identify $i\equiv \IL_h$ and $s\equiv \IT_h$ after a change of basis), the orthogonality yields $A_{is} = A_{si} = 0$ with a globally coupled matrix $A_{ss}$ and a block diagonal matrix $A_{ii}$. In other words: due to the orthogonality of the subspaces, the discrete harmonic extension from global/Trefftz/skeleton unknowns to interior bubbles becomes trivial ($u_i^h=0$), while at the same time the global problem no longer depends on $u_i$ (so $g=f_s$ in \eqref{eq:staticcond:linalg2}). The Schur complement $S$ is then precisely the stiffness matrix restricted to $\IT_h$,  $A_{ss}$.

\subsection{Towards a local--global framework}
All Trefftz-type discretizations discussed above can be interpreted within a common local--global decomposition. Concretely, we split the discrete trial space as
\[
\Vh = \IL_h \oplus \IT_h,
\]
with \emph{local} components $\IL_h$ and \emph{global}/Trefftz components $\IT_h$. The resulting discrete system can then be written in block form
\begin{equation}\label{eq:block-lg}
\begin{pmatrix}
A_{\IL\IL} & A_{\IL\IT} \\
A_{\IT\IL} & A_{\IT\IT}
\end{pmatrix}
\begin{pmatrix}
u_\IL\\ u_\IT
\end{pmatrix}
=
\begin{pmatrix}
\ell_\IL\\ \ell_\IT
\end{pmatrix},
\end{equation}
in analogy with the interface/interior partitioning in static condensation, cf.~\eqref{eq:staticcond:linalg1}.
Different Trefftz-type methods correspond to particular structural properties of the blocks in \eqref{eq:block-lg}:
\begin{itemize}
    \item Classical polynomial Trefftz DG selects $\IT_h$ so that all elementwise PDE constraints are exactly satisfied in the trial space and is typically formulated for homogeneous PDEs. In this case the local equation vanishes and one has $u_\IL=0$; the global problem is solved solely in $\IT_h$.
    \item Embedded Trefftz DG and quasi-Trefftz DG admit non-homogeneous problems through elementwise local solves in $\IL_h$ (particular solutions/affine shifts). By construction of $\IT_h$ via weak or Taylor–based Trefftz conditions, the coupling from global to local vanishes,
    \[
    A_{\IL\IT}=0,
    \]
    so that local problems are independent and can be solved in parallel, followed by a global solve restricted to $\IT_h$. In general, however, the converse coupling block $A_{\IT\IL}$ does not vanish automatically, even for self-adjoint PDE operators: unlike the pure static condensation setting, the discrete operators imposed on $\IL_h$ and on $\IT_h$ are not identical, and the resulting block structure need not be symmetric.
\end{itemize}

This viewpoint motivates a unified analysis in the subsequent sections, where we formalize local--global discretizations with an even more general structure that especially includes all the cases considered above.

\section{The framework for local-global discretizations} \label{sec:framework}
In this section we introduce the general framework for discretizations with a local-global decomposition.
We start with a generic definition of a well-posed continuous problem in \cref{ssec:continuousproblem}.
Then we set up notation and function spaces of a generic discretization in \cref{ssec:discretization}.
This is based on local subproblems, introduced in \cref{ssec:localprob}, a global problem on the remainder space, discussed in \cref{ssec:globalprob}, and the coupling of these problems \cref{ssec:globallocalprob}.
We present the a-priori error analysis of a local-global discretization in \cref{ssec:globallocalprobanal} and derive the main result of this section, \cref{thm:Tcoercivity:coupled}, which is the stability of the coupled system.
Finally, we discuss error estimates in weaker norms in \cref{ssec:weakernorms}.

\subsection{Continuous problem} \label{ssec:continuousproblem}
Let $\V$ and $\W$ be Hilbert spaces. We typically think about a Sobolev space on an open bounded Lipschitz domain $\Omega\subset \IR^d$ with $d=2,3$ (where a PDE problem may be posed). 
We consider the following abstract problem: Find $u\in \V$ such that
\begin{equation}\label{eq:abstract} 
    \addtocounter{equation}{1} \tag{$\texttt{PDE}|\theequation$}
    a(u,v) = \ell(v) \quad \forall v\in \W,
\end{equation}
where $a:\V\times \W\to\IR$ is a continuous bilinear form and $\ell\in \W'$. 
We assume that the problem is \hypertarget{def:tc}{$\tc$-coercive for some bounded bijective linear operator $\tc:\V\to \W$ such that for all $u\in \V$}
\begin{equation}\label{eq:coercivity}
    \addtocounter{equation}{1} \tag{$\tc\!|\theequation$}
    a(u,\tc u) \geq \norm{u}_\V^2.
\end{equation}
We note that the existence of such a $\tc$ operator is equivalent to the usual inf-sup condition for stability, cf. \cite[Thm. 1]{Ciarlet2012}. 
Especially, if $\V=\W$ and $a(\cdot,\cdot)$ is coercive, then $\tc$ is a scalar $\tc \in (0,\infty)$ and we have the usual coercivity property  $ a(u,u) \geq \frac1\tc \norm{u}_\V^2$.

\subsection{Underlying discrete spaces and other notation} \label{ssec:discretization}

As a starting point for the discussion we consider a family of index sets $\Th$, and Hilbert spaces $\Vh$ and $\Wh$, indexed by $h\in\calH$.
We will use the notation $\lesssim$ to denote inequalities up to a constant that is independent of the choice of $(\Vh, \Wh)$ in this family. 
These (finite dimensional) Hilbert spaces $\Vh$ and $\Wh$ define a generic (underlying) discretization of the problem \eqref{eq:abstract}.
We further assume that $\Vh$ is equipped with a suitable discrete norm $\norm{\cdot}_{\Vh}$.
We assume that $a_h(\cdot,\cdot)$ is defined on $\Vsh \times \Wh$ where \hypertarget{def:Vsh}{$\Vsh := \Vh + \V$} and $\Wh$ are equipped with suitable norms $\norm{\cdot}_{\Vsh}$ and $\norm{\cdot}_{\Wh}$, respectively.
We assume that $\norm{\cdot}_{\Vh}$ is also defined on $\Vsh$ and weaker than $\TnormDG{\cdot}$, i.e. $\norm{\cdot}_{\Vh} \lesssim \TnormDG{\cdot}$.

\subsection{Local subproblems}\label{ssec:localprob}

We consider a subspace \hypertarget{def:Uh}{ $\Uh \subseteq \Vh$ which can be decomposed into a disjoint sum of spaces 
\begin{align} \label{eq::Vhsplitting}
    \Uh := \bigoplus_{K\in\Th} \UKh \subseteq \Vh.
    \addtocounter{equation}{1} \tag{$\Uhnl|\theequation$}
\end{align}
}

Further, we consider a set of \hypertarget{def:AK}{linear maps representing (scaled) local versions of the operator $a(\cdot,\cdot)$ from \eqref{eq:abstract}, e.g. the differential operator in a PDE, 
\begin{equation}
    \addtocounter{equation}{1} \tag{$A_K|\theequation$}
\AK: \Vh \to \QK'\quad\text{for each $K\in\Th$},
\end{equation}
where $\QK$ is a suitable space.} We want to emphasize that although $\AK$ maps into the local space $\QK'$, it is defined on the whole space $\Vh$. 
Now let $\Vert \cdot \Vert_{\QK'}$ denote the usual dual norm on $\QK$, i.e. $\Vert \cdot \Vert_{\QK'} = \sup_{q\in \QK, \Vert q \Vert_{\QK} = 1} \langle \cdot, q \rangle$.
For the simultaneous application of $\AK,~K\in\Th$, to an element in $\Uh$ we introduce the notation 
\begin{equation}
    \hypertarget{def:ATh}{\ATh : \Uh \to \QTp} \text{ with }\ATh u_h = (\AK u_h)_{K\in\Th},
    \addtocounter{equation}{1} \tag{$A_{\Th}\!|\theequation$}
\end{equation}
with \hypertarget{def:QT}{$\QT := \Pi_{K\in\Th} \QK$} and \hypertarget{def:QTp}{$\QTp$} its dual.
The maps $\AK$ are assumed to be locally stable in the following sense.

\begin{assumption}[Simultaneous local stability]\label{ass:local}
There exist spaces $\QKh \subset \QK$ such that the linear maps $\AK$ restricted to $\UKh$, i.e.  $\AK : \UKh \to \QKh'$ define bijective maps and further there holds
\begin{equation}\label{eq:local_solveability}
    \norm{\ATh  u_h }_{\QThp}^2 = \sum_{K\in\Th} \norm{\AK  u_h }_{\QKh'}^2 \gtrsim \norm{u_h}_{\Vh}^2 \quad \forall u_h \in \Uh,
    \addtocounter{equation}{1} \tag{${A_{\Th}}{\texttt{-stab}}|\theequation$}
\end{equation}
with \hypertarget{def:QTh}{$\QTh := \Pi_{K\in\Th} \QKh$} and \hypertarget{def:QThp}{$\Vert q_h \Vert_{\QTh} = \big(\sum_K \Vert q_h|_K \Vert_{\QK}^2\big)^{1/2},\ q_h\in\QKh$}.
\end{assumption}

We further require continuity of the maps $\AK $ on the whole space $\Vh$ in a suitable norm.
\begin{assumption}[Simultaneous $\V_{\!\star \hspace*{-0.2mm}h}$-continuity]\label{ass:local_strong}
We assume
simultaneous continuity of $\ATh$ on $\Vsh$, i.e.
\begin{equation}\label{eq:triple_continuity_h}
    \norm{\ATh u }_{\QThp} \lesssim \TnormDG{u} \quad \forall u\in \Vsh.
    \addtocounter{equation}{1} \tag{${A_{\Th}}{\texttt{-cont}^{\!\ast}}|\theequation$}
\end{equation}
\end{assumption}
We note that if $\Th$ corresponds to an overlapping domain decomposition, \cref{ass:local_strong} reads  essentially as an assumption of a finite overlap of subdomains. 

The two previous assumptions together render the well-posedness of the following local subproblem(s):
Find $u_\IL \in \Uh$ so that with 
$\ell_{\Th}(\cdot) = (\ell_K(\cdot)_K)_{K\in\Th}$ for some suitable functio\-nals $\ell_K(\cdot)$ there holds 
\begin{equation} 
    \langle \ATh u_\IL, q_h \rangle = \ell_{\Th}(q_h) \quad \forall q_h\in \QTh.    \addtocounter{equation}{1} 
    \tag{$\texttt{loc}|$\theequation}
    \label{eq:global-onT}
\end{equation}
Note that in general \eqref{eq:global-onT} does not decompose into a set of local subproblems.
However, in a large class of discretization methods, cf. \cref{ssec:DGsetting} this is the case and \eqref{eq:global-onT} can be solved embarrassingly parallel.

Motivated by the previous two assumptions, we introduce the following semi-norm on $\Vsh$:
\begin{equation} \label{eq:AThnorm}
\AThnorm{v}{} := \norm{\ATh v}_{\QThp}\quad \forall v \in \Vsh,
\addtocounter{equation}{1} \tag{$|{\cdot}|_{A_{\Th}}|\theequation$}
\end{equation}
and note that under \cref{ass:local,ass:local_strong} $| \cdot |_{A_\Th}$ defines a norm on $\Uh$.

For the analysis in the later subsection we rely on both the previous assumptions. 
To streamline the verification of the assumptions, especially \cref{ass:local}, for many cases, we will provide sufficient conditions in \cref{sec:tools}.

\subsection{Global problem} \label{ssec:globalprob}
We define \hypertarget{def:ITh}{$\ITh$ as a complementary space of $\Uh$ in $\Vh$, thus we have the decomposition $\Vh = \ITh \oplus \Uh$.}
This allows us to formulate a global problem. 
Find $u_{\IT} \in \IT_h$ such that
\begin{align}
    a_h(u_{\IT},v_{\IT}) =  \ell_h(v_{\IT})  \quad \forall v_{\IT}\in \tch \ITh,
    \addtocounter{equation}{1} 
     \tag{$\texttt{glob}|$\theequation}
    \label{eq:global-onIT}
\end{align}
for some suitable discrete bilinear form $a_h(\cdot,\cdot)$, a discrete functional $\ell_h(\cdot)$, and a linear operator $\tch:\ITh\to\Wh$. 
We note that the bilinear form $a_h(\cdot,\cdot)$ is defined on $\Vsh \times \tch \ITh$ and not on $\Vh \times \Wh$.
Naturally, an underlying discretizations on $\Vsh\times\Wh$ can simply be restricted to $\Vsh \times \tch \ITh$.

For the stability of \eqref{eq:global-onIT} we require the following assumption.
\begin{assumption}\label{ass:assumption_Th}
There exists a uniformly bounded injective linear operator $\tch:\ITh \to \Wh$ such that for all $u_h\in \ITh$
    \begin{equation}\label{eq:coercivityTh}
        \addtocounter{equation}{1} \tag{$\IT_h\texttt{-stab}|$\theequation}
        a_h(u_h,\tch u_h) \geq \norm{u_h}_{\Vh}^2 
        \text{ with } \norm{\tch u_h}_{\Wh}\lesssim \norm{u_h}_{\Vh}.
    \end{equation}
\end{assumption}

The condition \eqref{eq:coercivityTh} poses the discrete version of the $\tc$-coercivity condition \eqref{eq:coercivity} and is equivalent to a discrete version of the inf-sup condition, see e.g. \cite[Thm. 2]{Ciarlet2012}.
For coercive problems the operator $\tch$ in \eqref{eq:global-onIT} is simply the scaled identity operator.

With the help of the $\tch$ operator in \cref{ass:assumption_Th}
we can also define an energy norm on $\ITh$ by
\begin{equation}\label{eq:ahnorm}
    \ahnorm{u_{\IT}}{} := \big(a_h(u_{\IT}, \tch u_\IT)\big)^{1/2} ~~\big( \geq \norm{u_{\IT}}_{\Vh} \big), \quad u_{\IT} \in \ITh.
    \addtocounter{equation}{1} \tag{$\Vert\cdot\Vert_{a_h}|$\theequation}
\end{equation}

We further assume that $a_h(\cdot,\cdot)$ is continuous in the following sense. 
\begin{assumption}\label{ass:cont_Th}
We assume that $a_h(\cdot,\cdot)$ is defined on $\Vsh \times \tch \ITh$ and continuous in the sense that 
\begin{equation}\label{eq:continuity_h}
    \addtocounter{equation}{1} \tag{$\IT_h\texttt{-cont}|$\theequation}
    a_h(u,v_h) \lesssim \TnormDG{u} \norm{v_h}_{\Wh} \quad \forall u\in \Vsh, \forall v_h\in \tch \ITh.
\end{equation}
\end{assumption}

We highlight that \cref{ass:assumption_Th,ass:cont_Th} represent a relaxation of the standard assumptions of $\tch$-coercivity and continuity.
Continuity is only require for test functions in $\tch \ITh$ as compared to the usual assumption for all test functions in $\Wh$.
The $\tch$-coercivity is only required for functions in $\ITh$. 

If the discrete formulation is derived by restricting a discretization that is well-posed on $\Vh \times \Wh$, we can inherit its $\tch$-coercivity and continuity.
In the special case that $\tch\ITh = \ITh$, we immediately inherit the $\ITh$-coericvity of the underlying discretization.

Otherwise, we must use the test space $\tch \ITh \subset \Wh$ to inherit the desired property.
The space $\tch \ITh$ might be tedious to construct. 
Therefore, in practice, it might be convenient to prove \cref{ass:assumption_Th} new, for a suitable $\tch$ operator that satisfies $\tch\ITh=\ITh$. 
An example of this can be found in \cite{LLS_NM_2024}, where the inf-sup condition for the Stokes problem is proved specifically on the Trefftz space.

\subsection{Coupled local-global problem} \label{ssec:globallocalprob}
In general the local and the global discretization problems may not fully decouple, but the global problem will depend on the local problems and vice versa, i.e. $\tilde \ell_K$ in \eqref{eq:global-onT} will depend on the global solution and $\tilde \ell_h$ in \eqref{eq:global-onIT} will depend on the local solution.
In this subsection we will discuss the coupling of these problems.

With the splitting of $\Vh$ into local and global spaces, we split the solution to the overall problem as 
$u_h = u_\IL + u_{\IT} \in \Uh \oplus \ITh$. 
We then define the overall discretization as the following coupled system:
\begin{align} \label{eq:block}
    \left(
    \begin{array}{c@{\quad\quad}c}
         \langle \ATh  \cdot, q_h \rangle &  \langle \ATh  \cdot, q_h \rangle  \\
        a_h(\cdot,v_h) & a_h(\cdot,v_h) 
    \end{array}
    \right) 
    \left(
     \begin{array}{c}
        u_\IL  \\
        u_{\IT}
     \end{array}
     \right)
    = \left(
    \begin{array}{c}
        \ell_{\Th}(q_h) \\
        \ell_h(v_h)
    \end{array}
    \right), 
\addtocounter{equation}{1} \tag{$\texttt{block}|$\theequation}
\end{align}
for all $q_h \in \QTh$ and $v_h \in \tch \ITh$.

We want to emphasize, that the test functions for the second row in \eqref{eq:block} are in the image of the $\tch$ operator (restricted to $\ITh$), which implies invertibility of the lower right block in the block system, assuming \cref{ass:assumption_Th}. Similarly, \cref{ass:local} assures invertibility of the upper left block in the block system.
We still need a suitable assumption on the off-diagonal blocks to ensure that the coupled system \eqref{eq:block} is solvable. 
This motivates the following assumption.
\begin{assumption}[Weak coupling]\label{ass:inexacttrefftz}
    We assume that the space $\ITh$, respectively $\Uh$, is chosen in such a way that there exists a constant $\rho\in [0,C_\IT^{-1})$ such that
    \begin{equation}\label{eq:inexacttrefftz}
        \AThnorm{u_{\IT}}{} 
        \le  \rho 
        \ahnorm{u_{\IT}}{} \quad \forall u_{\IT}\in \ITh,
    \quad \text{ where }\quad 
        C_\IT := \!\!\!\sup_{\substack{u_\IL\in \Uh, v_{\IT}\in \ITh}} \!\!\frac{|a_h(u_\IL,\tch v_{\IT})|}{ |u_\IL|_{\ATh} 
        \ahnorm{v_{\IT}}{}}.
        \addtocounter{equation}{1} \tag{$\rho, C_\IT|$\theequation}
    \end{equation}
\end{assumption}
A special, but practically very important, case is the case $\rho = 0$, i.e. where the upper right block vanishes.
Several Trefftz DG methods share this property, e.g. the Trefftz methods discussed in \cref{sec:Tlike}.

\subsection{A-priori error analysis of the coupled local-global problem} \label{ssec:globallocalprobanal}

For the analysis of the coupled problem \eqref{eq:block}, we define for $u_h \in \Vh$ the bilinear form
\hypertarget{def:Bh}{
\begin{equation}\label{eq:Bh}
    \Bh (u_h, z_h ) := \langle \ATh u_h, q_h \rangle + a_h(u_h,v_h)= \sum_{K\in \Th} \langle \AK  u_h, q_K \rangle + a_h(u_h,v_h),
    \addtocounter{equation}{1} \tag{$B_h|$\theequation}
\end{equation}
}
where $z_h = (q_h, v_h) \in \hypertarget{def:Wth}{\Wth := \QTh \times \tch \ITh}$. 
Using the unique decomposition $u_h = u_\IL + u_{\IT}$, equation 
$\Bh (u_h, z_h ) = \Bh (u_\IL + u_{\IT}, (q_h, v_h))$ reflects the structure of the block system~\eqref{eq:block}. 

Solving the coupled problem \eqref{eq:block} is equivalent to the following problem: Find $u_h\in \Vh$ such that
\begin{equation}\label{eq:Bh-sys}
    \Bh(u_h, z_h) = \ell_h(v_h) + \ell_{\Th}(q_h) \quad \forall z_h = (q_h, v_h) \in \Wth.
    \addtocounter{equation}{1} \tag{$B_h\texttt{-sys}|$\theequation}
\end{equation}

\begin{remark}\label{rem:indepL}
It is crucial to note that problem \eqref{eq:Bh-sys} only depends on the choice $\QTh$ and $\tch \ITh$ and not on the choice of $\Uh$.
For any choice of $\Uh$ such that $\Vh=\Uh \oplus \ITh$ $u_h=u_\IL + u_{\IT}$ solves \eqref{eq:Bh-sys} if and only if $u_\IL \in \Uh$ and $u_{\IT} \in \ITh$ solve the coupled system \eqref{eq:block}.
In particular, solving \eqref{eq:block} for two different choices $\Uh$ and $\widetilde{\Uh}$ will ultimately yield the same solution $u_h=u_\IL + u_{\IT}= \tilde u_{\widetilde{\IL}} + \tilde u_{\IT}$ of \eqref{eq:Bh-sys}.
\end{remark}

The solvability of \eqref{eq:Bh-sys} is discussed next.
\begin{theorem} \label{thm:Tcoercivity:coupled}
    Assume that \cref{ass:local,ass:assumption_Th,ass:inexacttrefftz} hold.
    Then, there exist maps 
    $\tc_{\Th}:$ $\Vh \to \QTh$, 
    and 
    $\tc_{\ITh}:$ $\Vh \to \ITh$, 
    so that $\Bh(\cdot,\cdot)$ 
     is $\tc$-coercive on $\Vh\times \Wth$
    for the $\tc$-coercivity map $\tc_{\Vh}:$ $\Vh \to \Wth$, $u_h \mapsto  (\tc_{\Th} u_h , \tc_{\ITh} u_h )$, i.e. 
    for all $u_h\in \Vh$
    \begin{equation}\label{eq:Bhstab}
    \Bh(u_h, \tc_{\Vh} \! u_h) = \langle \ATh  u_h, \!\tc_{\Th}\! u_h \rangle + a_h(u_h, \!\tc_{\ITh}\! u_h )\gtrsim 
    \Bhnorm{u_h}{2}
    \left(\gtrsim \normDG{u_h}^2\right),
    \addtocounter{equation}{1} \tag{$B_h\texttt{-stab}|$\theequation}
    \end{equation}
    \begin{equation}\label{eq:Bhnorm}
        \text{where }\quad \Bhnorm{u_h}{2} := \AThnorm{u_\IL}{2} + \ahnorm{u_{\IT}}{2} \text{ for } u_h \in \Vh,~ u_\IL \in \Uh, u_{\IT} \in \ITh,
        \addtocounter{equation}{1} \tag{$\Vert \cdot \Vert_{B_h}|$\theequation}
    \end{equation}
    where we made use of the unique decomposition $u_h = u_{\IT} + u_\IL \in \ITh \oplus \Uh = \Vh$.
    Moreover, the maps $\tc_{\Th}$ and $\tc_{\ITh}$ and hence $\tc_{\Vh}$ are continuous w.r.t. the $\Bhnorm{\cdot}{}$-norm, i.e.:
    \begin{equation}\label{eq:Bhstab_Tbound}
        \norm{\tc_\Th u_h }_{\QTh} + \norm{\tc_{\ITh} u_h }_{\Wh} \lesssim 
        \Bhnorm{u_h}{} 
        \quad \forall u_h \in \Vh.
        \addtocounter{equation}{1} \tag{$T_{\Vh}\texttt{-cont}|$\theequation}
    \end{equation}
\end{theorem}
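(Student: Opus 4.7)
The plan is to exhibit the test function $\tc_{V_h} u_h = (\tc_{\Th} u_h, \tc_{\ITh} u_h)$ explicitly. Starting from the unique decomposition $u_h = u_\IL + u_{\IT}$, I would set $\tc_{\ITh} u_h := \tch u_{\IT}$, where $\tch$ is the global operator supplied by \cref{ass:assumption_Th}. For $\tc_{\Th} u_h$ I would take a suitably scaled Riesz representer of $\ATh u_\IL \in \QThp$: namely, the unique $q^\ast \in \QTh$ (exists because each $\QKh$ is a Hilbert space) with
\[
 \langle \ATh u_\IL, q^\ast \rangle \;=\; \AThnorm{u_\IL}{2}, \qquad \norm{q^\ast}_{\QTh} \;=\; \AThnorm{u_\IL}{},
\]
and then set $\tc_{\Th} u_h := \eta \, q^\ast$ for a parameter $\eta>0$ to be chosen.

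Plugging this into $\Bh(u_h, \tc_{V_h} u_h)$ and splitting $u_h = u_\IL + u_\IT$ in each slot yields two ``diagonal'' contributions and two ``cross'' contributions. The diagonal terms give $\eta \AThnorm{u_\IL}{2}$ (by construction of $q^\ast$) and $\ahnorm{u_\IT}{2}$ (by \cref{ass:assumption_Th} and the definition of $\Vert\cdot\Vert_{a_h}$). For the cross terms, I would bound $|\eta\langle \ATh u_\IT, q^\ast\rangle|$ by Cauchy--Schwarz in $\QTh$ and then apply \cref{ass:inexacttrefftz} to replace $\AThnorm{u_\IT}{}$ by $\rho\, \ahnorm{u_\IT}{}$, while $|a_h(u_\IL,\tch u_\IT)|$ is controlled directly by $C_\IT \AThnorm{u_\IL}{}\ahnorm{u_\IT}{}$ from the definition of $C_\IT$. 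Writing $X := \AThnorm{u_\IL}{}$ and $Y := \ahnorm{u_\IT}{}$, this yields the lower bound
\[
 \Bh(u_h, \tc_{V_h} u_h) \;\geq\; \eta X^2 - (\eta \rho + C_\IT)\, X Y + Y^2 .
\]

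The main -- if elementary -- obstacle is to choose $\eta$ so that the above quadratic form is uniformly positive definite. This reduces to the discriminant condition $(\eta \rho + C_\IT)^2 < 4\eta$. For $\rho = 0$ any $\eta > C_\IT^2/4$ works. For $\rho>0$, minimizing the left-hand side over $\eta$ gives a minimum value proportional to $\rho C_\IT - 1$, which is strictly negative precisely under the hypothesis $\rho C_\IT < 1$ of \cref{ass:inexacttrefftz}. Picking such an $\eta$ (depending only on $\rho, C_\IT$) delivers a constant $c>0$ with $\Bh(u_h, \tc_{V_h} u_h) \geq c\,(X^2+Y^2) = c \Bhnorm{u_h}{2}$, which is \eqref{eq:Bhstab}.

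It then remains to establish the continuity bound \eqref{eq:Bhstab_Tbound} and the parenthetical estimate. For the first, $\norm{\tc_\Th u_h}_{\QTh} = \eta \AThnorm{u_\IL}{} \leq \eta \Bhnorm{u_h}{}$ is immediate from the construction, while $\norm{\tc_{\ITh} u_h}_{W_h} = \norm{\tch u_\IT}_{W_h} \lesssim \norm{u_\IT}_{V_h} \leq \ahnorm{u_\IT}{} \leq \Bhnorm{u_h}{}$ follows from \cref{ass:assumption_Th} and the definition of $\Vert\cdot\Vert_{a_h}$. Finally, $\Bhnorm{u_h}{2} \gtrsim \norm{u_h}_{V_h}^2$ follows from the triangle inequality combined with $\norm{u_\IL}_{V_h} \lesssim \AThnorm{u_\IL}{}$ (from \cref{ass:local}) and $\norm{u_\IT}_{V_h} \leq \ahnorm{u_\IT}{}$, which completes the argument.
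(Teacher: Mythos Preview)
Your proof is correct, but the route differs from the paper's. The paper does not simply take $\tc_{\ITh} u_h = \tch u_{\IT}$ and $\tc_{\Th} u_h$ proportional to the Riesz representer of $\ATh u_\IL$; instead it uses the ``antisymmetrized'' choices
\[
\tc_K u_h := R_K \AK(u_\IL - u_{\IT}), \qquad \tc_{\ITh} u_h := \beta_\rho\, \tch(u_{\IT} - P_{\IT} u_\IL),
\]
where $P_{\IT}:\Uh\to\ITh$ is defined so that $a_h(w_\IT,\tch P_\IT u_\IL)=a_h(u_\IL,\tch w_\IT)$ for all $w_\IT\in\ITh$. This construction is engineered so that the two off-diagonal contributions $-a_h(u_\IT,\tch P_\IT u_\IL)+a_h(u_\IL,\tch u_\IT)$ cancel exactly, and the remaining terms combine to a \emph{diagonal} quadratic form
\[
\Bh(u_h,\tc_{V_h} u_h)\;\ge\; (1-\beta_\rho C_\IT^2)\,\AThnorm{u_\IL}{2} + (\beta_\rho-\rho^2)\,\ahnorm{u_\IT}{2},
\]
for which any $\beta_\rho\in(\rho^2,C_\IT^{-2})$ does the job. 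Your approach instead accepts a cross term $-(\eta\rho+C_\IT)XY$ and verifies the discriminant condition $(\eta\rho+C_\IT)^2<4\eta$, which you correctly observe is solvable precisely when $\rho C_\IT<1$. The trade-off: your argument is shorter and avoids the auxiliary projection $P_{\IT}$ entirely, while the paper's diagonalization makes the positivity and the admissible parameter range completely transparent and yields somewhat cleaner constants.
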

\begin{proof}
    We define $\tc_{\Th}$ and $\tc_{\ITh}$, starting with $\tc_{\Th}$ through its components $\tc_K$, $K\in\Th$: 
    \begin{align}
    \tc_K u_h &:=  R_K \AK (u_\IL - u_{\IT}) \in \QKh.
    \addtocounter{equation}{1} \tag{$\tc_{K}\!|$\theequation}
    \end{align}
    Here, $R_K: \QKh' \to \QKh$ denotes the Riesz operator in $\QKh'$  so that
    $\langle q_h, R_K p_h \rangle = (q_h,p_h)_{\QKh'}$ for $p_h,q_h \in \QKh'$ for $K\in\Th$. 
    With the definition of norms  \eqref{eq:AThnorm}, a triangle inequality and \eqref{eq:inexacttrefftz}, we obtain the following continuity estimate for $\tc_{\Th}$:
    \begin{align} \label{eq:tcTh-cont}
    \norm{\tc_{\Th} u_h}_{\QTh} & \!\lesssim\! \AThnorm{u_\IL - u_{\IT}}{} \!\!\leq \AThnorm{u_\IL}{} \!\!+\! \AThnorm{u_{\IT}}{}\!\stackrel{}{\lesssim}\! \AThnorm{u_\IL}{} \!\!+\! \ahnorm{u_{\IT}}{} ~
    \forall u_h \in \Vh.
    \addtocounter{equation}{1} \tag{$\tc_{\Th}\!\!\texttt{-cont}|$\theequation}
    \end{align}
    Next, for a constant $\beta_\rho  > 0$ to be chosen later (in dependence of $\rho$ and $C_\IT$) we define $\tc_{\ITh}$ as
    \begin{align}
        \tc_{\ITh} u_h &:= \beta_\rho  \tch (u_{\IT} - P_{\IT} u_\IL) \in \tch \ITh,
        \addtocounter{equation}{1} \tag{$\tc_{\IT_h}\!|$\theequation}
    \end{align}
     where 
    $P_\IT: \Uh \!\to\! \ITh$ is the solution map $P_{\IT}\!:\! u_\IL \!\mapsto\! v_{\IT} \!\in\! \ITh$ to the problem: Find $v_{\IT}\!\in\!\ITh$ s.t.
    \begin{equation}
    a_h(w_{\IT}, \tch v_{\IT}) = a_h(u_\IL, \tch w_{\IT})\quad \forall w_{\IT} \in \ITh.
    \addtocounter{equation}{1} \tag{$P_{\IT}|$\theequation} \label{eq:PIT}
    \end{equation}
    We note that \eqref{eq:PIT} is well-posed due to the coercivity of $a_h(\cdot,\tch \cdot)$ on $\ITh$, cf. \cref{ass:assumption_Th}.
    
    Next, we check for the continuity of $\tc_{\ITh}$ and begin with the continuity of $P_\IT$. 
    Taking $w_{\IT} = P_\IT u_\IL$ in \eqref{eq:PIT}, together with the definition of $C_\IT$  yields \vspace*{-1em}
    \begin{equation}
        |P_{\IT} u_\IL|_{a_h}^2 = a_h(P_\IT u_\IL, \tch P_\IT u_\IL ) \!\!\!\!\stackrel{\eqref{eq:PIT}}{=}\!\!\!\! a_h(u_\IL, \tch P_\IT u_\IL ) \!\!\!\!\!\!\stackrel{\eqref{eq:inexacttrefftz}}{\leq} \!\!\!\!\!\!C_\IT 
        |u_\IL|_{A_{\Th}} |P_{\IT} u_\IL|_{a_h}. \label{eq:ast}
        \addtocounter{equation}{1} \tag{$P_{\IT}\texttt{-cont}|$\theequation}
    \end{equation}
    Dividing by $\ahnorm{P_{\IT} u_\IL}{}$ yields $\ahnorm{P_{\IT} u_\IL}{} \leq C_{\IT} \AThnorm{u_\IL}{}$\!. 
    From continuity of $\tc_h$ (cf. \cref{ass:assumption_Th}), a triangle inequality and  \eqref{eq:ast} we obtain continuity for $\tc_{\ITh}$:
    \begin{equation} \label{eq:tcITh-cont}
        \norm{\tc_{\ITh} u_h}_{\Wh} \!\lesssim\! \norm{u_{\IT} \!- P_{\IT} u_\IL}_{\Vh} \!\leq\!  \ahnorm{u_{\IT}}{} \!\!+ \ahnorm{P_{\IT} u_\IL}{} \stackrel{}{\lesssim} \ahnorm{u_{\IT}}{} \!\!+ \AThnorm{u_{\IL}}{}
        ~ \forall u_h \in \Vh.
        \addtocounter{equation}{1} \tag{$\tc_{\IT_h}\!\!\texttt{-cont}|$\theequation}
    \end{equation}

    Having defined the $\tc_{\Vh}$-coercivity map through its two components, we prepare a bound for the contribution of $u_\IL$ in the $a_h(\cdot,\cdot)$ bilinear form. 
    Repeating the first two equalities (in opposite direction) in \eqref{eq:ast} and plugging in the resulting continuity bound for $P_{\IT}$ yields
    \begin{equation}
        a_h(u_\IL, \tch P_\IT u_\IL ) = a_h(P_\IT u_\IL, \tch P_\IT u_\IL ) \leq C_\IT^2 |u_\IL|_{A_{\Th}}^2.
        \addtocounter{equation}{1} \tag{$\ast$} \label{eq:astast}
    \end{equation}
    Finally, we can plug in the definitions of $\tc_{\Th}$ and $\tc_{\ITh}$ into the bilinear form $\Bh$ to obtain the desired coercivity estimate \eqref{eq:Bhstab} which we do in two steps corresponding to the two parts.
    Considering the $\tc_{\ITh}$-part of $\Bh(u_h,T_{\Vh}u_h)$ we obtain the estimate
    \begin{align*}
        \beta_\rho^{-1} a_h(u_h, \tc_{\ITh} u_h) &= a_h(u_\IT + u_\IL, \tch (u_\IT - P_{\IT} u_\IL))\\[-3.3ex]
                                                        &= a_h(u_\IT, \tch u_\IT) 
                                                        - a_h(u_\IL, \tch P_{\IT} u_\IL)
                                                        \overbrace{- a_h(u_\IT, \tch P_{\IT} u_\IL) 
                                                        + a_h(u_\IL, \tch u_\IT)}^{=0 \text{ with }\eqref{eq:PIT}} \\
                                                        &\stackrel{\eqref{eq:astast}}{\geq} |u_{\IT}|_{a_h}^2 - C_\IT^2 
                                                        |u_\IL|_{A_{\Th}}^2.
    \end{align*}
Now estimating the $\tc_\Th$ part of $\Bh(u_h,T_{\Vh}u_h)$ we obtain 
\begin{align*}
    \langle \ATh  u_h, \tc_\Th u_h \rangle &= \sum_{K\in \Th} \overbrace{\langle \AK  (u_\IL + u_{\IT}), R_K \AK  (u_\IL - u_\IT)\rangle }^{=(\AK  (u_\IL + u_{\IT}), \AK  (u_\IL - u_\IT))_{\QKh'}}
     \\
    & = \AThnorm{u_\IL}{2} - \AThnorm{u_\IT}{2} 
    \geq \AThnorm{u_\IL}{2} - \rho^2 \ahnorm{u_\IT}{2},
\end{align*}
where in the last step we used \eqref{eq:inexacttrefftz}.
Summing up the two estimates we obtain
\begin{align*}
    \Bh(u_h, \tc_{\Vh} u_h) &\geq \beta_\rho
     \ahnorm{u_\IT}{2} - \beta_\rho  C_\IT^2 
    \AThnorm{u_\IL}{2} + \AThnorm{u_\IL}{2} - \rho^2 \ahnorm{u_\IT}{2}
    \\
                            &\geq (\beta_\rho -\rho^2) \ahnorm{u_\IT}{2} + (1-C_\IT^2 \beta_\rho)  \AThnorm{u_\IL}{2}.
\end{align*}
Now choosing $\beta_\rho \in ( \rho^2, C_\IT^{-2})$, e.g. $\beta_\rho = \frac{C_\IT^{-2}+\rho^2}{2}$ for $C_\IT >0$ or $\beta_\rho = 2\rho^2$ for $C_\IT=0$, we obtain the desired coercivity.
\end{proof}

With \cref{thm:Tcoercivity:coupled} and additionally assuming that \cref{ass:local_strong,ass:cont_Th} holds, we can relate the norms $\norm{\cdot}_{\Vh}$,  $\norm{\cdot}_{\Vsh}$ and $\Bhnorm{\cdot}{}$ for functions in $u_h \in \Vh$:
\begin{align*}
\Vert u_h & \Vert_{\Vh} \!
\stackrel{\triangle}{\leq} \!
\norm{u_\IL}_{\Vh} +
\norm{u_{\IT}}_{\Vh}
\!\!\!\squeezedstackrel{\eqref{eq:local_solveability}, \eqref{eq:coercivityTh}}{\lesssim} \!\!\!
\AThnorm{u_\IL}{} + \ahnorm{u_\IT}{} 
\!\!\squeezedstackrel{\eqref{eq:Bhnorm}}{=}
 \Bhnorm{u_h}{}~~
\!\!\squeezedstackrel{\eqref{eq:Bhstab}}{\lesssim} \
\Bhnorm{u_h}{-1} \Bh(u_h, \tc_{\Vh} u_h) 
\\
&
\squeezedstackrel{\eqref{eq:Bh}}{=} 
\Bhnorm{u_h}{-1} \big( \langle \ATh  u_h, \!\tc_{\Th}\! u_h \rangle + a_h(u_h, \!\!\tc_{\ITh}\! u_h ) \big)
\!\!\!\!\squeezedstackrel{\eqref{eq:triple_continuity_h}, \eqref{eq:tcITh-cont}, \eqref{eq:tcTh-cont}}{\lesssim} \!\!\!
\Bhnorm{u_h}{-1} \norm{u_h}_{\Vsh} (\AThnorm{u_\IL}{}\!\! +\! \ahnorm{u_\IT}{} ) 
\!=\! \norm{u_h}_{\Vsh},
\end{align*}
i.e. in total we have 
\begin{align}  \label{eq:VhBhVhstar_normequi} 
    \norm{u_h}_{\Vh} \lesssim \Bhnorm{u_h}{} \lesssim \norm{u_h}_{\Vsh}.
\end{align}
Next, we exploit the stability result to obtain error bounds.
\begin{corollary}[Strang-type result]\label{cor:cea}
    Let $u\in V$ be the solution to \eqref{eq:abstract} and $u_h\in \Vh$ be the solution to \eqref{eq:block}. Assume that 
    \cref{ass:local,ass:local_strong,ass:assumption_Th,ass:inexacttrefftz,ass:cont_Th}
    hold. Then there holds the bound
    \begin{equation}\label{eq:global-approx}
        \norm{u-u_h}_{\Vh} \!\lesssim\! \inf_{v_h \in \Vh}\!\TnormDG{u - v_h} + \norm{a_h(u,\cdot) - \ell_h}_{\tch\ITh'} + \norm{A_\Th u - \ell_\Th}_{\QThp},
        \addtocounter{equation}{1} \tag{$B_h\texttt{-Str}|$\theequation}
    \end{equation}
    where the hidden constants depend only on the constants in the assumptions. 
\end{corollary}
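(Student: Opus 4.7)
The plan is to follow a classical Strang-type argument: insert an arbitrary discrete function $v_h \in V_h$, split the error via the triangle inequality into approximation and a ``discrete error'', and estimate the discrete error using the coercivity result \cref{thm:Tcoercivity:coupled} together with the continuity assumptions \cref{ass:local_strong,ass:cont_Th} and the norm chain \eqref{eq:VhBhVhstar_normequi}.

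Concretely, first I would write $\norm{u - u_h}_{V_h} \leq \norm{u - v_h}_{V_h} + \norm{v_h - u_h}_{V_h}$ for any $v_h \in V_h$. The first summand is controlled by $\TnormDG{u-v_h}$ via the assumed norm comparison $\norm{\cdot}_{V_h} \lesssim \TnormDG{\cdot}$. For the second summand, I would use $\norm{v_h - u_h}_{V_h} \lesssim \Bhnorm{v_h - u_h}{}$ from \eqref{eq:VhBhVhstar_normequi} and then apply \eqref{eq:Bhstab} to obtain
\[
\Bhnorm{v_h - u_h}{2} \lesssim B_h\bigl(v_h - u_h, \tc_{V_h}(v_h - u_h)\bigr).
\]

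Next, I would exploit the fact that $u_h$ solves \eqref{eq:Bh-sys}, so with $z_h = (q_h, v_{\IT}) := \tc_{V_h}(v_h - u_h)$,
\[
B_h(v_h - u_h, z_h) = \langle A_\Th v_h, q_h\rangle + a_h(v_h, v_{\IT}) - \ell_\Th(q_h) - \ell_h(v_{\IT}).
\]
I would insert $\pm A_\Th u$ and $\pm a_h(u,\cdot)$, grouping the terms into a ``consistency part'' $\langle A_\Th u - \ell_\Th, q_h\rangle + (a_h(u, v_{\IT}) - \ell_h(v_{\IT}))$ and an ``approximation part'' $\langle A_\Th(v_h - u), q_h\rangle + a_h(v_h - u, v_{\IT})$. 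The first two bounded by $\norm{A_\Th u - \ell_\Th}_{\QThp} \norm{q_h}_{\QTh}$ and $\norm{a_h(u,\cdot)-\ell_h}_{\tch\ITh'} \norm{v_{\IT}}_{W_h}$ by definition of the dual norms, and the approximation part bounded by $\TnormDG{u - v_h}(\norm{q_h}_{\QTh} + \norm{v_{\IT}}_{W_h})$ using \eqref{eq:triple_continuity_h} and \eqref{eq:continuity_h}.

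Finally, the continuity bound \eqref{eq:Bhstab_Tbound} on $\tc_{V_h}$ yields $\norm{q_h}_{\QTh} + \norm{v_{\IT}}_{W_h} \lesssim \Bhnorm{v_h - u_h}{}$, so dividing by $\Bhnorm{v_h - u_h}{}$ gives
\[
\Bhnorm{v_h - u_h}{} \lesssim \TnormDG{u - v_h} + \norm{a_h(u,\cdot) - \ell_h}_{\tch\ITh'} + \norm{A_\Th u - \ell_\Th}_{\QThp},
\]
and taking the infimum over $v_h \in V_h$ concludes \eqref{eq:global-approx}. The argument is fully routine once \cref{thm:Tcoercivity:coupled} and the norm-equivalence \eqref{eq:VhBhVhstar_normequi} are in hand; the only mild care needed is in properly identifying that the test component in $W_h$ indeed lies in $\tc_h \ITh$ (so that \cref{ass:cont_Th} applies) and that the choice of $v_h$ need not belong to either $\Uh$ or $\ITh$, since \eqref{eq:VhBhVhstar_normequi} and \cref{thm:Tcoercivity:coupled} operate on arbitrary elements of $V_h$.
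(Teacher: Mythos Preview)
Your proposal is correct and follows essentially the same route as the paper's proof: both use the $\tc$-coercivity of $B_h$ from \cref{thm:Tcoercivity:coupled}, split $B_h(v_h-u_h,\cdot)$ into an approximation part (handled via \cref{ass:local_strong,ass:cont_Th}) and a consistency part (handled via the dual norms), apply the continuity bound \eqref{eq:Bhstab_Tbound} on $\tc_{V_h}$, divide by $\Bhnorm{v_h-u_h}{}$, and close with the triangle inequality and \eqref{eq:VhBhVhstar_normequi}. The only cosmetic difference is that the paper writes the splitting as $B_h(v_h-u,\cdot)+B_h(u-u_h,\cdot)$ before unpacking, whereas you first substitute the equation for $u_h$ and then insert $\pm u$; the resulting estimates are identical.
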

\begin{proof}
    The solution $u_h$ of $\eqref{eq:block}$ solves $\eqref{eq:Bh-sys}$ and hence we can apply the previous theorem.
    Let $v_h \in \Vh$. Let $\tc_{\Vh}:$ $\Vh \to \Wth$, $u_h \mapsto  (\sum_K \tc_{K} u_h , \tc_{\ITh} u_h )$ be as in the previous theorem. By \eqref{eq:Bhstab} we have
    \begin{align*}
        \Bhnorm{v_h-u_h}{2}
        & \lesssim B_h( v_h-u_h, \tc_{\Vh} (v_h - u_h))\\
        & \lesssim B_h( v_h-u, \tc_{\Vh} (v_h - u_h))
        + B_h( u-u_h, \tc_{\Vh} (v_h - u_h))
        \\ &= \langle \ATh  (v_h - u), \tc_{\Th} (v_h - u_h) \rangle + a_h(v_h - u, \tc_{\ITh} (v_h- u_h) )
        \\ & \qquad \quad+ \langle \ATh  (u - u_h), \tc_{\Th} (v_h - u_h) \rangle + a_h(u - u_h, \tc_{\ITh} (v_h- u_h) )
        \\ & \lesssim
        \big( \AThnorm{v_h - u}{} + \norm{\ATh u - \ell_\Th }_{\QThp} \big) \norm{\tc_\Th(v_h - u_h)}_{\QTh} 
        \\ & \qquad \quad + \big( \norm{v_h - u}_{\Vsh} + \norm{a_h(u,\cdot) - \ell_h}_{\tch\ITh'} \big) \norm{\tc_{\ITh} (v_h- u_h)}_{\Wh},
    \end{align*}
    where we have used \cref{ass:cont_Th} in the third step.
    Using \cref{ass:local_strong} to bound $\AThnorm{v_h - u}{}$ by $\norm{v_h - u}_{\Vsh}$ and the continuity bound \eqref{eq:Bhstab_Tbound} we conclude 
    \begin{align*}
        \Bhnorm{v_h-u_h}{2}
        \lesssim &
        \big( \norm{v_h - u}_{\Vsh} 
         + \norm{a_h(u,\cdot) - \ell_h}_{\tch\ITh'} 
         + \norm{\ATh u - \ell_\Th }_{\QThp}  \big) \cdot 
         \Bhnorm{v_h-u_h}{},
    \end{align*}                
    Dividing by the latter factor we obtain
    \begin{align*}
        \Bhnorm{v_h-u_h}{}
        \lesssim &
        \norm{v_h - u}_{\Vsh} 
         + \norm{a_h(u,\cdot) - \ell_h}_{\tch\ITh'} 
         + \norm{\ATh u - \ell_\Th }_{\QThp}.
    \end{align*}                
    Now with  \eqref{eq:VhBhVhstar_normequi} we have $\norm{v_h - u_h}_{\Vh} \lesssim          \Bhnorm{v_h-u_h}{}$, 
    and thus bound \eqref{eq:global-approx} follows by the triangle inequality 
    $
    \norm{u_h - u}_{\Vh} 
    \leq
    \norm{u - v_h}_{\Vh}
    +
    \norm{u_h - v_h}_{\Vh},
    $
    bounding $\norm{u - v_h}_{\Vh} \lesssim \norm{u - v_h}_{\Vsh}$
    and finally taking the infimum over all $v_h\in \Vh$.

\end{proof}

\begin{remark}
    Let us stress that the error bound in \eqref{eq:global-approx} depends on an approximation bound over the whole space $\Vh$. This is a major feature of this analysis framework and is in contrast to the standard analyses on polynomial Trefftz methods, 
    with the exception of the work \cite{lozinski19} where a Trefftz method is proposed without identifying it as such.
    The usual analysis in the literature is on the global problem on the Trefftz space $\ITh$, 
    and employs the (averaged) Taylor polynomials as interpolation operators.
    The presented framework allows for a generic error analysis of Trefftz methods in terms of the approximation error in the whole space $\Vh$.
    Best approximation results from the underlying space $\Vh$ can be directly transferred to the Trefftz space $\ITh$. 
\end{remark}

\subsection{Aubin-Nitsche arguments for error estimates in weaker norms}\label{ssec:weakernorms}
In this section we discuss the possibility of obtaining error estimates in weaker norms associated to some Hilbert space\footnote{A famous example is $\IH = L^2(\Omega) \supset H^1(\Omega) = \V$ for elliptic PDEs with $L^2-H^2$ regularity, e.g. the Poisson problem on domains with smooth or convex boundaries.} $\IH\supset \V$. We make the usual assumption that $\V$ is dense in $\IH$ and the additional assumption that also $\Vsh \subset \IH$ and that $\normDG{\cdot}$ is stronger than the norm
$\norm{\cdot}_\IH$. 
As common, in order to derive the estimates in this section we consider the dual problem. For this we introduce the space $\Wsh:=W+\Wh$ with a suitable norm $\norm{\cdot}_{\Wsh}$. We assume that this norm is stronger than $\norm{\cdot}_W$ and $\norm{\cdot}_{\Wh}$. A crucial ingredient for the error bound is a suitable regularity condition of certain functions in $W$ for which the estimates \eqref{eq:H-reg-bound} and \eqref{eq:H-reg-bound2} below can be derived. More precisely, we want to
consider functions $z \in W$ such that $a(\cdot,z)\in \IH'$, i.e. $z$ such that 
\[
    \norm{a(\cdot,z)}_{\IH'} = \sup_{v\in \V} \frac{|a(v,z)|}{\norm{v}_{\IH}} < \infty. 
\]

Equipped with this regularity notion, we are able to formulate the following error bound in the $\IH$-norm. 

\begin{theorem} \label{th:aubinnitsche}
    Additionally to \cref{ass:local,ass:local_strong,ass:assumption_Th,ass:inexacttrefftz,ass:cont_Th}, we assume that the norms $\norm{\cdot}_{\Vh}$ and  $\TnormDG{\cdot}$ are equivalent on $\Vh$ and that $\TnormDG{\cdot}$ is stronger than $\norm{\cdot}_\V$ on $\V$. Furthermore, we assume the following consistency 
    \begin{equation}
        \label{eq:a-bit-consistent}
        a_h(v, z) = a(v,z) \quad \forall v \in \Vsh, z \in \{y\in W\mid a(\cdot,y)\in \IH' \},
        \addtocounter{equation}{1} \tag{$a_h\texttt{-adj\!.\!cons\!.}|$\theequation}
    \end{equation}
    as well as the following continuity for the consistent extension of $a_h$
    \begin{equation}
        \label{eq:a-bit-cont}
        a_h(v, z) \lesssim \TnormDG{v}\norm{z}_{\Wsh} \quad \forall v \in \Vsh, z \in \Wh + \{y\in W\mid a(\cdot,y)\in \IH' \}. 
        \addtocounter{equation}{1} \tag{$a_h\texttt{-adj\!.\!cont\!.}|$\theequation}
    \end{equation}
    Assume that there exists a constant $h_H>0$ such that for all $z\in W$ with $a(\cdot,z)\in \IH'$ 
    \begin{align}\label{eq:H-reg-bound}
        \inf_{z_h \in \tch \ITh} \norm{z -z_h}_{\Wh} &\lesssim
        h_H \sup_{v\in \V} \frac{|a(v,z)|}{\norm{v}_{\IH}},\\
        \label{eq:H-reg-bound2}
        \inf_{z_h \in \Wh} \norm{z -z_h}_{\Wsh} &\lesssim
        h_H \sup_{v\in \V} \frac{|a(v,z)|}{\norm{v}_{\IH}}.
    \end{align}
    Let $u\in \V$ be the solution to \eqref{eq:abstract} and $u_h\in \Vh$ be the solution to \eqref{eq:block}. Then, we have the approximation bound in the $\norm{\cdot}_H$-norm
\[
\norm{u-u_h}_H \lesssim h_H \inf_{v_h\in \Vh}\TnormDG{v_h-u} + (1+h_H)\norm{a_h(u,\cdot) - \ell_h}_{\tch\ITh'} + h_H \norm{\ATh u - \ell_\Th}_{\QThp}.
\]
\end{theorem}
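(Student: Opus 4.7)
The plan is a standard Aubin--Nitsche duality argument tailored to the block structure of \eqref{eq:block}, using the two regularity hypotheses \eqref{eq:H-reg-bound} and \eqref{eq:H-reg-bound2} to handle the $\Wsh$-continuity of $a_h$ and the discrete Galerkin-type relation on $\tch\ITh$ separately.

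First, I would represent the error by duality: since $u-u_h\in\Vsh\subset H$,
\[
\norm{u-u_h}_H \;=\; \sup_{\phi\in H',\,\norm{\phi}_{H'}\le 1}\; \langle \phi, u-u_h\rangle_{H'\times H}.
\]
For a fixed such $\phi$, the $\tc$-coercivity \eqref{eq:coercivity} makes the adjoint problem well posed, yielding a unique $z\in W$ with $a(v,z)=\langle \phi,v\rangle_{H'\times H}$ for all $v\in V$, satisfying $\norm{z}_W\lesssim 1$ and (by density of $V$ in $H$) $\norm{a(\cdot,z)}_{H'}=\norm{\phi}_{H'}\le 1$. The adjoint-consistency \eqref{eq:a-bit-consistent} then extends $a(\cdot,z)$ from $V$ to $\Vsh$ as $a_h(\cdot,z)$, and this extension still agrees with $\langle\phi,\cdot\rangle$ on $\Vsh$. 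Testing with $u-u_h\in\Vsh$ gives the key identity $\langle\phi,u-u_h\rangle=a_h(u-u_h,z)$.

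Next, combining the two regularity hypotheses I would pick $z_h\in\tch\ITh$ from \eqref{eq:H-reg-bound} and $\tilde z_h\in W_h$ from \eqref{eq:H-reg-bound2} with $\norm{z-z_h}_{W_h}\lesssim h_H$ and $\norm{z-\tilde z_h}_{\Wsh}\lesssim h_H$, and decompose
\[
a_h(u-u_h,z) \;=\; a_h(u-u_h,z-\tilde z_h) \;+\; a_h(u-u_h,\tilde z_h-z_h) \;+\; a_h(u-u_h,z_h).
\]
The first term is bounded directly via \eqref{eq:a-bit-cont} by $\lesssim h_H\TnormDG{u-u_h}$. For the second, since $\tilde z_h-z_h\in W_h$, the standard continuity \eqref{eq:ahcont} together with $\norm{\tilde z_h-z_h}_{W_h}\le\norm{\tilde z_h-z}_{\Wsh}+\norm{z-z_h}_{W_h}\lesssim h_H$ again gives $\lesssim h_H\TnormDG{u-u_h}$. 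For the third, testing the second block row of \eqref{eq:block} with $q_h=0$ and $v_h=z_h\in\tch\ITh$ yields the discrete equation $a_h(u_h,z_h)=\ell_h(z_h)$, hence
\[
a_h(u-u_h,z_h) \;=\; a_h(u,z_h)-\ell_h(z_h) \;\le\; \norm{a_h(u,\cdot)-\ell_h}_{\tch\ITh'}\,\norm{z_h}_{W_h},
\]
and $\norm{z_h}_{W_h}\lesssim 1+h_H$ follows from $\norm{z_h}_{W_h}\le\norm{z_h-\tilde z_h}_{W_h}+\norm{\tilde z_h}_{\Wsh}\lesssim h_H+\norm{z}_{\Wsh}$ together with the natural compatibility $\norm{z}_{\Wsh}\lesssim\norm{z}_W\lesssim 1$ of the sum-space norm on $W$.

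To close, summing the three estimates gives
\[
\langle\phi,u-u_h\rangle \;\lesssim\; h_H\TnormDG{u-u_h}\;+\;(1+h_H)\norm{a_h(u,\cdot)-\ell_h}_{\tch\ITh'}.
\]
Using the assumed equivalence $\TnormDG{\cdot}\sim\norm{\cdot}_{V_h}$ on $V_h$, a triangle inequality, and the Strang-type bound \eqref{eq:global-approx} from \cref{cor:cea}, I would estimate
\[
\TnormDG{u-u_h} \;\lesssim\; \inf_{v_h\in V_h}\TnormDG{u-v_h}+\norm{a_h(u,\cdot)-\ell_h}_{\tch\ITh'}+\norm{\ATh u-\ell_\Th}_{\QThp},
\]
and plugging in and then taking the supremum over $\phi$ produces the claim. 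The main obstacle is the interplay between two distinct approximation spaces for the dual solution: $\tch\ITh\subset W_h$ is required to activate the Galerkin-type identity from the second row of \eqref{eq:block}, while the larger $W_h$ is required so that the continuity \eqref{eq:a-bit-cont} may be applied in the sharper $\Wsh$-norm. This forces the two-step decomposition with the auxiliary test function $\tilde z_h$, and requires careful bookkeeping of $W_h$- versus $\Wsh$-norms of the various approximants, in particular for the $(1+h_H)$ prefactor of the $\tch\ITh'$-consistency term.
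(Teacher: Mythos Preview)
Your duality set-up and the three-term splitting are fine, but the argument breaks down at the step where you bound $\norm{z_h}_{W_h}$. You write
\[
\norm{z_h}_{W_h}\le\norm{z_h-\tilde z_h}_{W_h}+\norm{\tilde z_h}_{\Wsh}\lesssim h_H+\norm{z}_{\Wsh}
\]
and then invoke a ``natural compatibility'' $\norm{z}_{\Wsh}\lesssim\norm{z}_W\lesssim 1$. That inequality is not available: the standing assumption in the paper (stated just before the theorem) is exactly the opposite, namely that $\norm{\cdot}_{\Wsh}$ is \emph{stronger} than $\norm{\cdot}_W$ on $W$. In typical DG realizations $\norm{\cdot}_{\Wsh}$ carries mesh-dependent face terms, and $\norm{z}_{\Wsh}$ is in general \emph{not} bounded uniformly in $h$ by $\norm{z}_W$. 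With this step missing you have no uniform control on $\norm{z_h}_{W_h}$, and hence no control on your third term $a_h(u-u_h,z_h)=a_h(u,z_h)-\ell_h(z_h)$ beyond the trivial (useless) bound.

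The paper circumvents this difficulty by not bounding $\norm{z_h}_{W_h}$ at all. It first defines $e_h\in\ITh$ via $a_h(e_h,w_h)=a_h(u_h-u,w_h)$ for all $w_h\in\tch\ITh$, so that $\norm{e_h}_{V_h}\lesssim\norm{a_h(u,\cdot)-\ell_h}_{\tch\ITh'}$, and then applies duality to $e=u_h-u-e_h$ instead of to $u-u_h$. The point is that $e$ satisfies \emph{exact} Galerkin orthogonality $a_h(e,z_h)=0$ for every $z_h\in\tch\ITh$, so one can freely subtract $z_h$ from $z$ and only differences $z-z_h$, $z-w_h$, $w_h-z_h$ appear, all of which are bounded by $h_H$ via \eqref{eq:H-reg-bound} and \eqref{eq:H-reg-bound2}. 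The consistency error then re-enters additively through $\norm{e_h}_H$ and $\TnormDG{e_h}$, which is where the $(1+h_H)$ prefactor comes from. Your decomposition cannot replicate this because it tries to handle the consistency error \emph{inside} the duality pairing rather than splitting it off beforehand.
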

\begin{proof} 
    We start by considering the consistency error. Let $e_h \in \ITh$ be the unique solution to the problem
    \[
    a_h(e_h,w_h) = a_h(u_h - u,w_h) \quad \forall w_h \in \tch\ITh,
    \]
    which exists and satisfies the bound $\norm{e_h}_H \lesssim \normDG{e_h} \lesssim \norm{a_h(u,\cdot) - \ell_h}_{\tch\ITh'}$ due to \cref{thm:Tcoercivity:coupled}. It remains to bound $e = u_h - u - e_h$ in the $\IH$-norm.  
 Due to the $\tc$-coercivity property \eqref{eq:coercivity} there exists a unique $y\in \V$ such that 
\[
a(v,Ty)= (e,v)_H\quad\text{for all }v\in \V \text{ with } \norm{y}_{\V}  \lesssim \norm{(e,\cdot)_H}_{\V'}\lesssim \norm{e}_H.
\]
Let $z:=\tc y$.
By construction we have that $a(\cdot, z) \in \IH'$, thus by \eqref{eq:a-bit-consistent} and the definition $e_h$, we have 
\begin{align*}
\norm{e}_H^2 & = a(e, z) =  a_h(u_h - u - e_h, z) 
\\ & =  a_h(u_h - u - e_h, z) - a_h(u_h - u - e_h, z_h) =  a_h(u_h - u - e_h, z - z_h)
\\ & =  a_h(u_h - u - e_h, w_h - z_h) +  a_h(u_h - u - e_h, z - w_h),
\end{align*}
for any $z_h \in \tch \ITh$ and any $w_h \in \Wh$. Using the continuity of $\bar a_h$, we obtain 
\begin{align*}
\norm{e}_H^2 &\lesssim \norm{e }_{\Vsh} ( \norm{w_h - z_h}_{\Wh} + \norm{z - w_h}_{\Wsh} )\\
& \lesssim \norm{e }_{\Vsh} (\norm{w_h - z}_{\Wh} + \norm{z-w_h}_{\Wsh}+ \norm{z-w_h}_{\Wsh}).
\end{align*}
Taking the infimum over all $z_h\in \tch \ITh$ and $w_h \in \Wh$ and using $\sup_{v\in \V} \frac{|a(v,z)|}{\norm{v}_{\IH}}= \norm{e}_H$ in \eqref{eq:H-reg-bound} and \eqref{eq:H-reg-bound2} we obtain 
\[ \norm{e}_H    \lesssim \norm{u_h - u - e_h}_{\Vsh} h_H .
\]
We now use $\TnormDG{\cdot}\lesssim \norm{\cdot }_{\Vh}$ on $\Vh$ and $\normDG{\cdot}\lesssim \TnormDG{\cdot }$ on $\Vsh$ to obtain
\begin{align*}
    \norm{u_h -u }_H & \lesssim \norm{e_h}_H  + \norm{e}_H  \lesssim (1+h_H) \TnormDG{e_h} +  h_H \TnormDG{u_h-u} 
    \\ & \lesssim (1+h_H) \normDG{e_h} + h_H \normDG{u_h-v_h} + h_H \TnormDG{u-v_h} 
    \\ & \lesssim (1+h_H) \normDG{e_h} +  h_H \normDG{u_h-u} +  h_H \normDG{u-v_h}+  h_H \TnormDG{u-v_h} 
    \\ & \lesssim (1+h_H) \norm{a_h(u,\cdot) - \ell_h}_{\tch\!\ITh'} +  h_H \normDG{u_h-u} +   h_H \TnormDG{u-v_h},
\end{align*}
for all $v_h\in \Vh$. Combining this with \eqref{eq:global-approx} and taking the infimum over all $v_h\in \Vh$ we obtain the desired bound.
\end{proof}

The bound \eqref{eq:H-reg-bound2} is common for estimates in weaker norms, however the additional assumption that \eqref{eq:H-reg-bound} holds, is also necessary. To obtain 
\eqref{eq:H-reg-bound} from \eqref{eq:H-reg-bound2} we need to assume additionally that $z$ is suitably approximated by $z_h = z_\IL + z_\IT$ for some $z_\IT \in \tch \ITh$ and $z_\IL$ which satisfies 
$\norm{z_\IL } \lesssim h_H \sup_{v\in \V} \frac{|a_h(v,z)|}{\norm{v}_{\IH}}$. 
In the special case that $\V=\W$, $\tch \ITh = \ITh$ and that \cref{ass:inexacttrefftz} holds with $\rho= 0$ the following Lemma provides a simple way to bound $h_H$ in \eqref{eq:H-reg-bound}.

\begin{lemma}\label{lemma:H-reg-bound}
    Assume that \cref{ass:local,ass:local_strong,ass:assumption_Th,ass:cont_Th}
    hold. Assume that \cref{ass:inexacttrefftz} holds with $\rho=0$ and that $\V=\W$.

    If $h_H >0$ is chosen such that for all $z \in \V$ with $a(\cdot,z) \in \IH'$
    \begin{equation} \label{eq:H-reg-bound-weak}
        \inf_{z_h \in \Vh} \norm{z - z_h}_{\Vsh} \lesssim h_H \sup_{v\in \V} \frac{|a(v,z)|}{\norm{v}_{\IH}} \quad \text{ and } \quad \AThnorm{z}{}  \lesssim h_H \sup_{v \in \V} \frac{|a(v,z)|}{\norm{v}_\IH},
        \addtocounter{equation}{1} \tag{$h_H\texttt{-weak}|$\theequation}
    \end{equation}
    then 
    \begin{equation}\label{eq:A-h-reg}
        \inf_{z_h \in  \ITh} \norm{z -z_h}_{\Vh} \lesssim 
         h_H \sup_{v\in \V} \frac{|a(v,z)|}{\norm{v}_{\IH}}.
    \end{equation}
\end{lemma}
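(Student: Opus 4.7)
The plan is to start from a near-optimal approximant $z_h^\ast \in V_h$ of $z$ in the $\TnormDG{\cdot}$-norm, which is granted by the first bound in \eqref{eq:H-reg-bound-weak}, decompose it along $V_h = \Uh \oplus \ITh$ as $z_h^\ast = z_\IL^\ast + z_\IT^\ast$, and then show that the local component $z_\IL^\ast$ is already $O(h_H)$-small in $\norm{\cdot}_{V_h}$. The desired approximant in $\ITh$ is then $z_\IT^\ast$, and its error is controlled by a triangle inequality.

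The crucial observation is that \cref{ass:inexacttrefftz} with $\rho=0$ forces $\AThnorm{u_\IT}{} = 0$ for every $u_\IT \in \ITh$, so $\ITh \subseteq \ker \ATh$. Consequently $\AThnorm{z_h^\ast}{} = \AThnorm{z_\IL^\ast}{}$, and combining this with the local stability of \cref{ass:local} yields $\norm{z_\IL^\ast}_{V_h} \lesssim \AThnorm{z_\IL^\ast}{} = \AThnorm{z_h^\ast}{}$. A triangle inequality together with the simultaneous $\Vsh$-continuity in \cref{ass:local_strong} and the second half of \eqref{eq:H-reg-bound-weak} then gives
\begin{align*}
\AThnorm{z_h^\ast}{} \le \AThnorm{z_h^\ast - z}{} + \AThnorm{z}{} \lesssim \TnormDG{z_h^\ast - z} + h_H \sup_{v\in V}\frac{|a(v,z)|}{\norm{v}_H}.
\end{align*}
Using the first bound in \eqref{eq:H-reg-bound-weak} to control $\TnormDG{z_h^\ast-z}$ (by choosing $z_h^\ast$ within a factor of the infimum) yields $\norm{z_\IL^\ast}_{V_h} \lesssim h_H \sup_{v\in V} |a(v,z)|/\norm{v}_H$.

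Finally, using $\norm{\cdot}_{V_h} \lesssim \TnormDG{\cdot}$ on $\Vsh$ and the triangle inequality
\begin{align*}
\norm{z - z_\IT^\ast}_{V_h} \le \norm{z - z_h^\ast}_{V_h} + \norm{z_\IL^\ast}_{V_h} \lesssim h_H \sup_{v\in V}\frac{|a(v,z)|}{\norm{v}_H},
\end{align*}
and noting that $z_\IT^\ast \in \ITh$, passing to the infimum concludes the argument. I do not expect a substantial obstacle here: the whole proof is a short algebraic manipulation unlocked by the exact Trefftz property $\rho=0$, and the only conceptual point is recognising that this property collapses $\ITh$ into $\ker \ATh$, so that controlling the local part of any $V_h$-approximant reduces to one application of local stability together with the hypothesized decay of $\AThnorm{z}{}$.
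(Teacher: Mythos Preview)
Your proof is correct and, in fact, more elementary than the paper's. The paper constructs the approximant $z_h$ as the solution of the coupled block system \eqref{eq:block} with right-hand side $(0,\,a(z,\cdot))$, then invokes the full Strang-type estimate \eqref{eq:global-approx} (which rests on the $T$-coercivity machinery of \cref{thm:Tcoercivity:coupled}) to bound $\normDG{z-z_h}$ by $\inf_{v_h}\TnormDG{z-v_h} + \AThnorm{z}{}$; the observation $\rho=0$ then forces $z_h\in\ker\ATh=\ITh$. Your argument bypasses all of this: you take \emph{any} near-best approximant $z_h^\ast\in V_h$, split off its local part, and control $\norm{z_\IL^\ast}_{V_h}$ directly via \cref{ass:local} and \cref{ass:local_strong}. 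In particular, your route never touches \cref{ass:assumption_Th}, \cref{ass:cont_Th}, or the hypothesis $V=W$, so it actually proves a slightly stronger statement than the one in the lemma. The paper's version has the advantage of reusing already-established results verbatim, but your direct argument is cleaner and isolates precisely which ingredients are doing the work.
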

\begin{proof}
    Let $z_h \in \Vh$ be the solution of \eqref{eq:block} with right-hand side given by
    $$\left(
        \begin{array}{c}
            \ell_{\Th}(\cdot ) \\
            \ell_h( \cdot )
        \end{array}
        \right) = \left(
            \begin{array}{c}
                0 \\
                a(z,\cdot)
            \end{array}
            \right).
        $$
        The bound \eqref{eq:global-approx} shows that 
        \[
            \normDG{z-z_h} \lesssim \inf_{v_h\in \Vh }\TnormDG{z - v_h} + \AThnorm{z}{}.
        \]
        Using \eqref{eq:H-reg-bound-weak}, we obtain
        \[
        \normDG{z-z_h} \lesssim h_H \sup_{v\in \V} \frac{|a(v,z)|}{\norm{v}_{\IH}}.
        \]
        Noting that due to $\rho=0$ in \cref{ass:inexacttrefftz} we have that $z_h \in \ker \ATh = \ITh$ completes the proof. 
\end{proof}

\section{Tools to verify the framework assumptions} \label{sec:tools}

The abstract framework in \cref{sec:framework} relies on several assumptions. Most of them are quite natural and easy to check for Trefftz-like methods that are based on an underlying standard discretization. Less common and obvious is perhaps \cref{ass:local}. 
In this section we hence provide tools that make the analysis framework more accessible by providing alternative sufficient conditions for some of the previous assumptions. 

In \cref{ssec:suffa1} we give sufficient conditions of the stability of the operator $\ATh$.
To prove local stability we introduce the helpful notion of a prototype operator in \cref{sec:protop}.
In \cref{ssec:DGsetting} we consider discontinuous Galerkin discretizations, which allows us to simplify several assumptions due to its local, element-wise, approach.
In \cref{ssec:genericembt} we discuss a generic recipe for the embedded Trefftz method, that guarantees local stability.

\subsection{Sufficient conditions for \texorpdfstring{\cref{ass:local}}{Assumption 1}}\label{ssec:suffa1}

From \cref{rem:indepL} it is clear that it suffices to verify \cref{ass:local} for a convenient choice of $\Uh$.
To simplify this further, we can also formulate the following alternative to \cref{ass:local}
 which is stronger, but may be easier to check in many cases:
\begin{lemma}\label{lemma:quasiortho}
    If there exists a family of projections $P_K: \Uh \to \UKh$ 
    satisfying
    \begin{subequations}
    \begin{alignat}{3}
    \AK  &= \AK  P_K \text{ in } \QKh' && \quad  \forall K\in\Th,  \label{eq:quasiortho:a}\\
    \textstyle{\sum_{K}} \big\Vert P_K u \Vert_{\Vh}^2 
        &\gtrsim \norm{u}_{\Vh}^2
        &&\quad\forall u \in \Uh,
    \label{eq:quasiortho:b}
    \\
    c_{\text{\tiny\eqref{eq:quasiortho:c}}} \norm{u}_{\Vh}  &\le \norm{\AK  u}_{\QKh'} &&\quad  \forall K\in \Th, \forall u\in \UKh, &&
    \label{eq:quasiortho:c}
    \end{alignat}
\end{subequations}
for some $c_{\text{\tiny\eqref{eq:quasiortho:c}}}$,
    then \cref{ass:local} holds.
\end{lemma}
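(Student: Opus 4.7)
The proof is a direct chaining of the three hypotheses, with essentially no additional work required beyond verifying the bijectivity part of \cref{ass:local}.

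The plan is the following. Fix an arbitrary $u \in \Uh$ and write $u_K := P_K u \in \UKh$ for every $K \in \Th$. For each $K$, property \eqref{eq:quasiortho:a} states that $\AK$ and $\AK P_K$ agree as elements of $\QKh'$, so in particular
\[
\AThnorm{u}{}^2 = \sum_{K\in\Th} \norm{\AK u}_{\QKh'}^2 = \sum_{K\in\Th} \norm{\AK P_K u}_{\QKh'}^2.
\]
Since $P_K u \in \UKh$, property \eqref{eq:quasiortho:c} gives $\norm{\AK P_K u}_{\QKh'} \geq c_{\text{\tiny\eqref{eq:quasiortho:c}}} \norm{P_K u}_{V_h}$ for each $K$. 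Squaring, summing over $K \in \Th$, and invoking \eqref{eq:quasiortho:b} yields
\[
\sum_{K\in\Th} \norm{\AK u}_{\QKh'}^2 \geq c_{\text{\tiny\eqref{eq:quasiortho:c}}}^2 \sum_{K\in\Th} \norm{P_K u}_{V_h}^2 \gtrsim \norm{u}_{V_h}^2,
\]
which is precisely the norm bound \eqref{eq:local_solveability} claimed in \cref{ass:local}.

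It remains to verify the bijectivity statement in \cref{ass:local}, namely that $\AK \colon \UKh \to \QKh'$ is a bijection. Injectivity follows immediately from \eqref{eq:quasiortho:c}: if $\AK u = 0$ for some $u \in \UKh$, then $\norm{u}_{V_h} \leq c_{\text{\tiny\eqref{eq:quasiortho:c}}}^{-1} \norm{\AK u}_{\QKh'} = 0$ and thus $u = 0$. Surjectivity is then a consequence of the finite-dimensionality of the spaces involved together with the dimensional compatibility of $\UKh$ and $\QKh$ that is built into the setup of the local problems (cf.\ the assumption that $\AK \colon \UKh \to \QKh'$ is well-posed in the formulation of \eqref{eq:global-onT}); equivalently, one may note that \eqref{eq:quasiortho:c} shows $\AK$ has closed range of maximal dimension in the finite-dimensional space $\QKh'$.

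The main — and really only — nontrivial point is the interplay between \eqref{eq:quasiortho:a} and \eqref{eq:quasiortho:b}: condition \eqref{eq:quasiortho:a} asserts that $P_K$ captures all information of $u$ that $\AK$ can ``see'', while \eqref{eq:quasiortho:b} guarantees that the collection $\{P_K u\}_K$ collectively controls the full $V_h$-norm of $u$. Together they let a pointwise (in $K$) stability estimate \eqref{eq:quasiortho:c} be lifted to the global bound of \cref{ass:local}, bypassing any need to directly invert $\ATh$ on $\Uh$ as a whole.
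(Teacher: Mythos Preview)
Your chain of inequalities for the norm bound \eqref{eq:local_solveability} is exactly the paper's argument (just read in the reverse direction): the paper writes $\norm{u}_{V_h}^2 \lesssim \sum_K \norm{P_K u}_{V_h}^2 \le c_{\text{\tiny\eqref{eq:quasiortho:c}}}^{-2}\sum_K \norm{\AK P_K u}_{\QKh'}^2 = c_{\text{\tiny\eqref{eq:quasiortho:c}}}^{-2}\sum_K \norm{\AK u}_{\QKh'}^2$, invoking \eqref{eq:quasiortho:b}, \eqref{eq:quasiortho:c}, \eqref{eq:quasiortho:a} in that order.

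One remark on the bijectivity discussion, which the paper's proof actually omits entirely: your injectivity argument from \eqref{eq:quasiortho:c} is fine, but the surjectivity step is not justified by the stated hypotheses. Nothing in \eqref{eq:quasiortho:a}--\eqref{eq:quasiortho:c} forces $\dim\UKh = \dim\QKh$, and neither ``closed range of maximal dimension'' nor an appeal to well-posedness of \eqref{eq:global-onT} supplies this (well-posedness of \eqref{eq:global-onT} is a \emph{consequence} of \cref{ass:local}, not an input to it). In practice the dimension match is arranged when $\QKh$ is chosen, but as written the lemma only delivers the stability estimate \eqref{eq:local_solveability} and injectivity; surjectivity needs the additional (tacit) dimensional compatibility.
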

\begin{proof}
    Let $u \in \Uh$, using, in order, \eqref{eq:quasiortho:b}, \eqref{eq:quasiortho:c}, and then \eqref{eq:quasiortho:a} we have
    \begin{align*}
        \norm{u}_{\Vh}^2 \lesssim \sum_{K\in\Th} \big\Vert P_K u \Vert_{\Vh}^2 \leq \frac{1}{c_{\text{\tiny\eqref{eq:quasiortho:c}}}^2} \sum_{K\in\Th} \big\Vert \AK  P_K u \Vert_{\QKh'}^2
        = \frac{1}{c_{\text{\tiny\eqref{eq:quasiortho:c}}}^2} \sum_{K\in\Th} \big\Vert \AK  u \Vert_{\QKh'}^2.
    \end{align*}
\end{proof}
In some settings, such as the non-conforming setting considered in \cref{ssec:DGsetting} the projections $P_K$ are naturally restriction operators of functions and 
conditions \eqref{eq:quasiortho:a} and \eqref{eq:quasiortho:b} are easily fulfilled and it remains only to \emph{locally} check \eqref{eq:quasiortho:c}. 
In other situations it may be more difficult to construct the projections $P_K$. In the next subsection we give a generic construction of $P_K$ based on an additional continuity assumption.

\subsection{Local stability properties inherited from prototype operators}\label{sec:protop}

\cref{ass:local}, respectively \eqref{eq:quasiortho:c}, may be challenging to verify for a general operator $\AK $. 
To verify the assumptions needed for the theory it is often beneficial to choose a well-understood operator $A_{K,0}$ that approximates $\AK$.
The following lemma shows that if \eqref{eq:local_solveability} holds for a prototype operators $A_{K,0}$, then it also holds for $\AK $ if the distance between them is small enough.

\begin{lemma}\label{lem:abstractneumann}
    If for some invertible  prototype operator $A_{K,0}:\UKh \to \QKh'$ there exist constants $\omega \ne 0$ and $\gamma \in (0,1)$ such that
    \begin{equation}\label{eq:nearlycontant}
        \norm{\omega \AK  A_{K,0}^{-1} - \id}_{\QKh' \to \QKh'}    = \sup_{u\in \UKh} \frac{\norm{\omega \AK  u - A_{K,0}u}_{\QKh'}}{\norm{A_{K,0}u}_{\QKh'}}\le\gamma,
    \addtocounter{equation}{1} \tag{$A_{K,0}|\theequation$}
    \end{equation}
    then the restriction $\AK : \UKh \to \QKh'$ is invertible, with
    \[
        c_{\text{\tiny\eqref{eq:quasiortho:c}}}^{-1} = \sup_{u\in \UKh} \frac{\norm{u}_{\Vh}}{\norm{\AK  u}_{\QKh'}} \le \frac1\omega \frac{1}{1-\gamma} \sup_{u\in \UKh} \frac{\norm{u}_{\Vh}}{\norm{A_{K,0} u}_{\QKh'}} .
    \]
\end{lemma}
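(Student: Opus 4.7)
The plan is a standard Banach perturbation (Neumann series) argument. Hypothesis \eqref{eq:nearlycontant} says that the operator $T := \omega \AK A_{K,0}^{-1}\colon \QKh' \to \QKh'$ (well-defined since $A_{K,0}$ is invertible by assumption) differs from the identity by an operator of operator norm at most $\gamma < 1$. First I would invoke the classical Banach lemma in the Banach algebra $\mathcal{L}(\QKh')$: writing $T = \id - (\id - T)$ with $\norm{\id - T}_{\QKh' \to \QKh'} \le \gamma$, the Neumann series $\sum_{n \geq 0} (\id - T)^n$ converges in operator norm, showing $T$ is invertible with
\[
\norm{T^{-1}}_{\QKh' \to \QKh'} \le \frac{1}{1-\gamma}.
\]

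Next, since $A_{K,0}\colon \UKh \to \QKh'$ is invertible and $\omega \neq 0$, I would deduce that $\AK|_{\UKh} = \tfrac{1}{\omega} T \circ A_{K,0}$ is a composition of bijections and hence itself a bijection $\UKh \to \QKh'$, with $(\AK|_{\UKh})^{-1} = \omega\, A_{K,0}^{-1}\, T^{-1}$.

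For the stability bound, given $u \in \UKh$, apply the inverse relation to $q := \AK u$ to get $u = \omega\, A_{K,0}^{-1} T^{-1} \AK u$, i.e.\ $A_{K,0} u = \omega\, T^{-1} \AK u$. Taking the $\QKh'$-norm and using the Neumann bound on $T^{-1}$ yields
\[
\norm{A_{K,0} u}_{\QKh'} \le \frac{|\omega|}{1-\gamma}\, \norm{\AK u}_{\QKh'}.
\]
Combining with the trivial inequality $\norm{u}_{V_h} \le \sup_{u'\in \UKh} \frac{\norm{u'}_{V_h}}{\norm{A_{K,0} u'}_{\QKh'}} \cdot \norm{A_{K,0} u}_{\QKh'}$, dividing by $\norm{\AK u}_{\QKh'}$, and taking the supremum over $u \in \UKh$ delivers the claimed inequality (up to replacing $\omega$ by $|\omega|$, which is immaterial since $\omega \neq 0$ and otherwise one may absorb the sign in $\omega$).

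There is no real obstacle here: the argument is essentially the textbook Neumann series, and the only point to check carefully is that $\QKh'$, being the dual of a Hilbert space, is complete so that the Neumann series converges in $\mathcal{L}(\QKh')$. The composition argument is immediate because both factors in $T = \omega \AK A_{K,0}^{-1}$ have been made sense of on the correct spaces, and the scalar $\omega \neq 0$ can be absorbed harmlessly in the final bound.
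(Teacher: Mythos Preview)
Your argument is correct and essentially identical to the paper's: both define the perturbation $T=\omega\AK A_{K,0}^{-1}$ (the paper first absorbs $\omega$ by a WLOG), invoke the Neumann series to invert $T$ with bound $1/(1-\gamma)$, and then compose with $A_{K,0}^{-1}$ to obtain the inverse of $\AK$ and the stability estimate. Your remark about $|\omega|$ versus $\omega$ is a fair nitpick on the paper's stated bound, but as you note it is harmless.
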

\begin{proof}
    Without loss of generality, by replacing $\AK $ by $\omega \AK $, we assume that $\omega=1$. We define $Y = \id - \AK  A_{K,0}^{-1}$ and in the remainder of the proof we will use the short hand notation $\Tnorm{\cdot}$ for the operator norm 
    $\norm{\cdot}_{\QKh' \to \QKh'}$ for operators mapping from $\QKh'$ to $\QKh'$. Then 
    \[
    \begin{aligned}
        \Tnorm{Y} & = \Tnorm{\AK  A_{K,0}^{-1} - \id}     
        = \sup_{q\in \QKh'}{\norm{(\AK  - A_{K,0})A_{K,0}^{-1} q }_{\QKh'}} \Big/{\norm{q}_{\QKh'}} \\
        & \!\!\!\!\!\!\stackrel{u = A_{K,0}^{-1}q}{=} \sup_{u\in \UKh}{\norm{(\AK  - A_{K,0})u }_{\QKh'}} \Big/ {\norm{A_{K,0}u}_{\QKh'}} \le \gamma <1.
    \end{aligned}
    \]
    Hence, the corresponding Neumann series converges, i.e.
    \begin{align*}
        (\id - Y)^{-1} = \sum_{k=0}^\infty Y^k \quad \text{and}\quad  \sum_{k=0}^\infty \Tnorm{Y^k} \le \sum_{k=0}^\infty \gamma^k = \frac{1}{1-\gamma},
    \end{align*}
    and it follows that $\AK  A_{K,0}^{-1} = (\id - Y)^{-1}$ is invertible. Further we get with 
    $\Tnorm{(\id - Y)^{-1}} \le \sum_{k=0}^\infty \Tnorm{Y^k}$ the bound  $\Tnorm{( \AK A_{K,0}^{-1})^{-1}} \le \frac{1}{1-\gamma}$.
    Finally, we conclude that 
    \[\AK ^{-1}= A_{K,0}^{-1} (\AK  A_{K,0}^{-1} )^{-1}, \] 
    exists and satisfies 
    \begin{align*}
        c_{\text{\tiny\eqref{eq:quasiortho:c}}}^{-1} & = \sup_{u\in \UKh} \frac{\norm{u}_{\Vh}}{\norm{\AK  u}_{\QKh'}} = \norm{\AK ^{-1}}_{\QKh' \to \UKh}
         \le \frac{1}{1-\gamma} \norm{A_{K,0}^{-1}}_{\QKh' \to \UKh},
    \end{align*} 
    which implies the claim.
\end{proof}

\subsection{The discontinuous Galerkin setting} \label{ssec:DGsetting}

Let $\Omega \subset \mathbb{R}^{d}, d=2,3$ be a bounded Lipschitz domain and $\Th$ be a partition of $\Omega$ into non-overlapping elements $K$.
We assume that we are interested in approximating the solution $u$ of a partial differential equation of the form $ A u = f $ in $\Omega$ with suitable boundary conditions and further assume that function spaces $V$ and $W$ over $\Omega$ (e.g. Sobolev spaces) are given such that the PDE problem is well-posed in a weak form: Find $u \in V$ s.t. $ a (u, \cdot) = \ell(\cdot)$ in $W'$.  

In the discontinuous Galerkin setting, we consider discrete spaces that are allowed to be non-conforming, i.e. $\Vh\not\subset V$ and $\Wh\not\subset W$. Further, in the following we only consider the case $\Wh = \Vh$. The space $\Vh$ is constructed from local spaces $\Vh(K)$ on each element $K\in\Th$, then
\[
\Vh = \bigoplus_{K\in \Th} \Vh(K).
\]
We assume that a corresponding localization also holds for the decomposition $\Vh =\ITh \oplus \Uh$, which is accordingly translated to the local space, i.e. we have  $\Vh(K) = \ITh(K) \oplus \UKh$ with $\ITh = \bigoplus_{K\in \Th} \ITh(K)$ (and $\Uh = \bigoplus_{K\in \Th} \UKh$).
For $P_K$ we choose the restriction operator\footnote{The restriction $u|_{K}\in\Vh$ is extended by zero outside of $K$.} $P_K u = \restr{u}{K}$ for all $u\in \Vh$ and $K\in\Th$ which guarantees \eqref{eq:quasiortho:b}.
Further, with the next assumption we assume that the operators $\AK $, often the (scaled) restriction of the strong form operator $A$ on $K$, effectively act only on $\Vh(K)$.
\begin{assumption}[Locality of $\AK $] \label{ass:locality}
    For all $v_h \in \Vh$  
    we have the strong locality property of $\AK $
    \begin{equation}\label{eq:simple_ass_strloc}
        \AK  ~ (v_h|_{K'}) = 0 \text{ in } \QKh' \quad \forall K' \neq K,
    \end{equation}
    for a suitable local space $\QKh$.
\end{assumption}
\cref{ass:locality}   together with $P_K \cdot = \restr{\cdot}{K}$ ensures \eqref{eq:quasiortho:a}. 
We note that $\QKh$ should be chosen such that \cref{ass:locality} holds. 
This choice is still open and depends on the choice of $\AK $.

From an algorithmic point of view it is worth noting that \eqref{eq:global-onT} decomposes into a set of local problem: 
Find $u_\IL \in \Uh$ with $u_\IL = \sum_{K \in \Th} u_{\IL,K},~ u_{\IL,K} \in \UKh$ such that
\begin{equation*}
    \langle \AK  u_{\IL,K}, q_K\rangle = \tilde\ell_K(q_K) \quad \forall K\in \Th, q_K\in \QKh.
\end{equation*}

This assumption allows us to simplify the requirements that we need to verify in \cref{lemma:quasiortho} and therefore we get the following simple corollary.
\begin{corollary}\label{cor:locality}
   Assume that \eqref{eq:quasiortho:c} and \cref{ass:locality} hold, then \cref{ass:local} also holds true.
\end{corollary}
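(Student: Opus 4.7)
The plan is to apply \cref{lemma:quasiortho} with the natural choice of projections $P_K u := \restr{u}{K}$ dictated by the DG setting, and to verify its three hypotheses \eqref{eq:quasiortho:a}, \eqref{eq:quasiortho:b} and \eqref{eq:quasiortho:c} one by one. Hypothesis \eqref{eq:quasiortho:c} is part of the assumption of the corollary, so the only work is to establish \eqref{eq:quasiortho:a} and \eqref{eq:quasiortho:b}.

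For \eqref{eq:quasiortho:a}, I would take an arbitrary $u \in \Uh$ and decompose it along the direct sum $\Uh = \bigoplus_{K' \in \Th} \UKhp$, writing $u = \sum_{K' \in \Th} u_{K'}$ with $u_{K'} \in \UKhp$ supported on $K'$. Applying $\AK$ and using \cref{ass:locality}, every contribution with $K' \neq K$ vanishes in $\QKh'$, so $\AK u = \AK u_K = \AK(\restr{u}{K}) = \AK P_K u$, which is exactly \eqref{eq:quasiortho:a}.

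For \eqref{eq:quasiortho:b}, the point is that the discrete norm $\norm{\cdot}_{V_h}$ on the DG space $V_h = \bigoplus_{K \in \Th} V_h(K)$ can (and here must) be assumed broken, so that $\sum_K \norm{\restr{u}{K}}_{V_h}^2 = \norm{u}_{V_h}^2$ for every $u \in V_h$, in particular for every $u \in \Uh$. Under this standard DG compatibility of the norm, \eqref{eq:quasiortho:b} holds with constant $1$; this is essentially the statement already claimed in the paragraph introducing the DG setting.

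With \eqref{eq:quasiortho:a}, \eqref{eq:quasiortho:b} and \eqref{eq:quasiortho:c} all in place, \cref{lemma:quasiortho} immediately yields \cref{ass:local}, completing the proof. No step presents a genuine obstacle: the only subtlety is making explicit the conventional assumption that the $V_h$-norm splits additively across element contributions, which is the whole point of adopting the DG framework in \cref{sec:DGsetting}.
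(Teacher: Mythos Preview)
Your approach is essentially identical to the paper's: set $P_K u = u|_K$, derive \eqref{eq:quasiortho:a} from the strong locality \eqref{eq:simple_ass_strloc} of \cref{ass:locality}, note that \eqref{eq:quasiortho:c} is assumed, and invoke \cref{lemma:quasiortho}.

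The one small overstatement is your claim that $\sum_K \norm{\restr{u}{K}}_{V_h}^2 = \norm{u}_{V_h}^2$ holds with equality. In the DG norms used throughout the paper (see e.g.\ \eqref{eq:darnorms}) there are facet jump terms $\norm{\jmp{u}}_F^2$ coupling neighbouring elements, so the norm is not literally broken. What one actually gets is $\norm{\jmp{u}}_F^2 = \norm{u_{K_1}-u_{K_2}}_F^2 \le 2\norm{u_{K_1}}_F^2 + 2\norm{u_{K_2}}_F^2$, hence $\norm{u}_{V_h}^2 \lesssim \sum_K \norm{\restr{u}{K}}_{V_h}^2$; this is precisely why the paper attributes \eqref{eq:quasiortho:b} to ``the Cauchy--Schwarz inequality'' rather than to an exact splitting. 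This does not affect the validity of your argument, since only the inequality $\gtrsim$ is needed.
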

\begin{proof}
To apply \cref{lemma:quasiortho} we use that $P_K u_h= \restr{u_h}{K}$, for all $K\in\Th$. 
For this choice \eqref{eq:quasiortho:b} follows from the Cauchy-Schwarz inequality.
Additionally, equation \eqref{eq:simple_ass_strloc} implies \eqref{eq:quasiortho:a}. 
Hence, since we considered that \eqref{eq:quasiortho:c}, \cref{lemma:quasiortho} can be applied, and thus we obtain \cref{ass:local}. 

\end{proof}

\subsection{Generic construction for the embedded Trefftz method}\label{ssec:genericembt}

Typically, when constructing the embedded Trefftz space \eqref{eq:IThproto} we have a suitable test space $\QKh$ in mind.
In this subsection we present a generic approach to obtain candidates that provide \cref{ass:local} in an embedded Trefftz DG method. 

Let $\QK$ be given with a proper inner product, often this will be $L^2(K)$, and 
$q_1,\ldots,q_m$ be an orthonormal basis of a sufficiently rich subspace of $\QK$. 
We aim to distinguish functions in $\Vh(K)$ between \emph{kernel}-like functions and remainder with respect to the operator $\AK $ based on suitable SVD decomposition.
Let $v_1,\ldots,v_n$ be an orthonormal basis of $\Vh(K)$.
Consider an SVD decomposition of
\[
(\langle \AK  v_i, q_j\rangle)_{ij} = Q_v \Sigma Q_q^T.
\]
This gives us a new orthonormal basis $\tilde v_1,\ldots,\tilde v_n$ of $\UKh$ and orthonormal vectors $\tilde q_1,\ldots, \tilde q_m$ corresponding to descending singular values.
We obtain the structure
\[
(\langle \AK  \tilde v_i, \tilde q_j\rangle)_{ij} = \Sigma 
 = \diag(\sigma_1,...,\sigma_{\min(n,m)},0,...,0) \in \mathbb{R}^{n\times m},
\]
with $\sigma_1\geq \sigma_2 \geq \ldots \geq \sigma_{\min(n,m)} \geq 0$.

We fix a threshold $\tau$ and choose $k$ such that $\sigma_{k+1} < \tau \leq \sigma_{k}$
with $\sigma_k$ being the smallest singular value representing the local problem. 
As a basis of $\UKh$, we choose the first $k$ vectors $\tilde v_1,\ldots,\tilde v_k$ and as a basis of $\QKh$, we choose the first $k$ vectors $\tilde q_1,\ldots,\tilde q_k$. As the complement of $\UKh$ we take $\ITh(K)  = \langle \tilde v_{k+1},\ldots,\tilde v_n\rangle$. Since by construction we always have $\langle \AK  \tilde v_{i}, \tilde q_j \rangle = 0$ for $i \neq j $, we get for the space $\ITh(K)$ the desired property
\[
\langle \AK  u, q \rangle = 0 \quad \forall u\in \ITh(K), q\in \QKh.
\]
\begin{lemma}\label{lem:weirdconstruction:nice}
    Choosing the threshold $\tau = c_{\text{\tiny\eqref{eq:quasiortho:c}}}$, the spaces $\ITh(K)$, $\UKh$ and $\QKh$ constructed above together with $A_K$ yield 
    \begin{equation*}
        c_{\text{\tiny\eqref{eq:quasiortho:c}}} \norm{u}_{\Vh}  \le \norm{\AK  u}_{\QKh'} ,
        \end{equation*}      
               which implies \cref{ass:local}.
\end{lemma}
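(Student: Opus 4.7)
The plan is to exploit the diagonal structure produced by the SVD: on the chosen bases, $A_K$ acts as a diagonal map between $\UKh$ and $\QKh$ with singular values $\sigma_1 \ge \dots \ge \sigma_k \ge \tau$, and the estimate \eqref{eq:quasiortho:c} reduces to a one-line computation with $c_{\text{\tiny\eqref{eq:quasiortho:c}}} = \sigma_k \ge \tau$. Once \eqref{eq:quasiortho:c} is established for this choice, \cref{ass:local} follows from \cref{cor:locality}, provided the strong locality \cref{ass:locality} holds for $A_K$, which is built into the DG setting assumed at the outset of \cref{sec:DGsetting} (the construction is purely elementwise, so each $A_K$ sees only $\restr{v_h}{K}$).

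\textbf{Step 1: Expansion in the SVD basis.} Given $u\in \UKh$, expand $u = \sum_{i=1}^k c_i \tilde v_i$. Since $\tilde v_1,\dots,\tilde v_n$ is orthonormal with respect to the inner product inducing $\norm{\cdot}_{V_h}$ on $V_h(K)$, one has $\norm{u}_{V_h}^2 = \sum_{i=1}^k c_i^2$. Similarly, any $q\in\QKh$ decomposes as $q = \sum_{j=1}^k d_j \tilde q_j$ with $\norm{q}_{Q(K)}^2 = \sum_{j=1}^k d_j^2$ by orthonormality of $\tilde q_1,\dots,\tilde q_m$ in $Q(K)$.

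\textbf{Step 2: Computation of the dual norm.} By the SVD identity $(\langle A_K \tilde v_i, \tilde q_j\rangle)_{ij} = \Sigma$ we have $\langle A_K \tilde v_i, \tilde q_j\rangle = \sigma_i \delta_{ij}$ for $i,j \le k$, so
\[
\langle A_K u, q\rangle = \sum_{i,j=1}^k c_i d_j \sigma_i \delta_{ij} = \sum_{i=1}^k \sigma_i c_i d_i.
\]
Taking the supremum over $q\in\QKh$ with $\norm{q}_{Q(K)}=1$ (i.e.\ over $d\in\IR^k$ with $\sum d_i^2 = 1$) yields by Cauchy--Schwarz (with equality attained)
\[
\norm{A_K u}_{\QKh'}^2 = \sum_{i=1}^k \sigma_i^2 c_i^2 \ge \sigma_k^2 \sum_{i=1}^k c_i^2 = \sigma_k^2 \norm{u}_{V_h}^2.
\]

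\textbf{Step 3: Threshold choice and conclusion.} By the choice of $k$, $\sigma_k \ge \tau = c_{\text{\tiny\eqref{eq:quasiortho:c}}}$, so $c_{\text{\tiny\eqref{eq:quasiortho:c}}}\norm{u}_{V_h} \le \norm{A_K u}_{\QKh'}$, which is exactly \eqref{eq:quasiortho:c}. Applying \cref{cor:locality} (the locality \cref{ass:locality} being inherent to the elementwise construction in the DG setting), we conclude \cref{ass:local}.

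I do not expect a serious obstacle here; the only point that deserves care is the matching of norms: we must be explicit that the orthonormality of $\{\tilde v_i\}$ and $\{\tilde q_j\}$ is taken with respect to the inner products inducing the local $V_h$-norm on $V_h(K)$ and the $Q(K)$-norm, respectively, so that the SVD singular values are precisely the optimal constants in \eqref{eq:quasiortho:c}. If instead the bases were orthonormal in unrelated inner products, one would pick up additional equivalence constants; these should therefore be fixed from the start of the construction. With this understood, Steps 1--3 are essentially a restatement of the standard fact that the smallest positive singular value equals the smallest norm-ratio on the range of the pseudoinverse.
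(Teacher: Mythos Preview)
Your proof is correct and follows essentially the same approach as the paper: both establish \eqref{eq:quasiortho:c} directly from the fact that the restricted SVD matrix has smallest singular value $\sigma_k \ge \tau = c_{\text{\tiny\eqref{eq:quasiortho:c}}}$, and then deduce \cref{ass:local}. The only cosmetic difference is in the final step: you invoke \cref{cor:locality} (relying on \cref{ass:locality} from the DG setting), while the paper invokes \cref{lemma:quasiortho} directly, taking $P_K$ to be restriction to $K$ followed by orthogonal projection onto $\UKh$, and checking \eqref{eq:quasiortho:a} via the SVD orthogonality $\langle A_K \tilde v_i, \tilde q_j\rangle = 0$ for $i>k$, $j\le k$. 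Since for $u\in\Uh$ restriction already lands in $\UKh$, the two choices of $P_K$ agree there, and both routes implicitly use locality of $A_K$; your packaging is equally valid and arguably more transparent in making the singular-value computation explicit.
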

\begin{proof}
    By construction we have that for the singular values of $(\langle \AK  \tilde v_i, \tilde q_j\rangle)_{i,j=1,..,k}$ there holds
    \[
        \sigma_1 \geq \ldots \geq \sigma_k \geq c_{\text{\tiny\eqref{eq:quasiortho:c}}},
    \]
    which implies the stability bound of \eqref{eq:quasiortho:c}.

    Now choose $P_K$ with $P_K u = P_K u|_K$ as the orthogonal projection to $\UKh$. By construction of $\UKh$ and $\QKh$, we have that $\langle \AK  P_K u, q \rangle = \langle \AK  u, q \rangle$ for all $u\in \Vh$, $q\in \QKh$. Hence we can apply \cref{lemma:quasiortho}.

\end{proof}

\section{Applications of the framework} \label{sec::examples}
In this section we present some examples of Trefftz DG methods that can be analyzed within the framework of this paper.
We recall from the discussion in \cref{ssec:DGsetting} that the main assumptions that need to be verified are \cref{ass:locality} and equation \eqref{eq:quasiortho:c} to obtain \cref{ass:local}.
Several examples are accompanied by numerical studies.
For the implementation of the methods we are using \texttt{NGSolve} \cite{ngsolve} and \texttt{NGSTrefftz} \cite{ngstrefftz}\footnote{Reproduction material is available in \cite{llsv_replication}.}.

We start by introducing some notation and assumptions on the index set $\Th$.
Let $\Th = \{K\}$ be a division of $\Omega$ into non-overlapping elements $K$. 
The local mesh size of a mesh element $K\in\Th$ is defined as $h_K = \diam(K) \coloneqq \sup_{x_1, x_2 \in K} |x_1 - x_2|$.
In a slight abuse of notation we write $h = \sup_{K \in \Th} h_K$.
We assume to work with mesh sequences that satisfy the following properties:
\begin{enumerate}[label={M\arabic*.},ref=M\arabic*]
    \item\label{ass:mesh1} There are two balls $b_{K'} \subset {K'} \subset B_{K'}$, such that ${K'}$ is star-shaped with respect to the ball $b_{K'}$ and $\diam(B_{K'})/\diam(b_{K'}) \lesssim 1$.
    \item\label{ass:mesh2} The element boundary can be divided into mutually exclusive subsets $\{ F_i \}_{i=0}^{n_{K'}}$ with $\diam(K') \leq c \diam(F_i)$, $i=0,...,n_{K'}$, where $n_{K'}$ and $c$ are uniformly bounded, satisfying
  \begin{itemize}
    \item[(i)] There exists a sub-element ${K'}_{F_i}$ of $K'$ with $d$ planar facets meeting at a
    vertex $x^0_i\in K'$, such that  ${K'}_{F_i}$ is star-shaped with respect to $x^0_i$ and $h_{K_{F_i}'} \simeq h_{K'}$.
    \item[(ii)] There exists a uniform constant $c_{\eqref{eq:gmesh}}$, such that 
        \begin{equation}\label{eq:gmesh}
      (x - x^0_i) \cdot n_{F_i}(x) \geq c_{\eqref{eq:gmesh}} h_{K'} \quad \forall x \in F_i.
    \end{equation}
  \end{itemize}
\end{enumerate}

Next, we introduce some standard notation for the DG method.
We denote by $\calF_h$ the set of all facets of $\calT_h$, i.e. the union of all $(d-1)$ dimensional parts of the element boundaries.
We assume that each $F\in\calF_h $ is either an \textit{interior facet} for which there exist two distinct elements $K_1, K_2\in \calT_h$ such that $F=\partial K_1\cap \partial K_2$, or a \textit{boundary facet} for which there exists an element $K_1\in\calT_h$ such that $F\subset \partial K\cap \partial \Omega$.
The sets of interior facets are denoted by $\Fhi$.
On each face $F\in\Fh$, we define the normal vector $n_F$ as the outward unit normal vector to $K_1$. 
On interior facets jump and average operators are defined as $\jmp{u} = u|_{K_1} - u|_{K_2}$ and $\avg{u} = \frac{1}{2}(u|_{K_1} + u|_{K_2})$, respectively.
On boundary facets we set $\jmp{u} = \avg{u} = u|_{K_1}$.

In this section we consider the underlying DG methods to be defined on the element-wise polynomial space, thus we set
\begin{equation*}
    \Vh = \IP^p(\Th) = \{v\in L^2(\Omega) \mid v|_K \in \IP^p(K) \quad \forall K\in\Th\},
\end{equation*}

For the reader's convenience, we identify $L^2$ with its dual, however, this identification cannot be done on subspaces.

\begin{remark}[Implementation of the embedded Trefftz method for known \texorpdfstring{$\QKh$}{QKh}]
Assume that a suitable choice for $\QKh$ is given, i.e. it is chosen such that for $\ITh(K)$ as in \eqref{eq:IThproto} there exists a space $\UKh$ such that \cref{ass:local,ass:local_strong} are fulfilled.

The Trefftz space $\ITh(K)$, given by \eqref{eq:IThproto}, is constructed as the kernel of 
the matrix $\bA=(\langle \AK  v_j, q_i \rangle)_{ij}$ for a basis $v_1,\dots,v_n$ of $\VKh$ and a basis $q_1,\dots , q_k$ of $\QKh$.
Note that, under the assumptions, the matrix has full row rank.
Thus, the dimension of the Trefftz space is given by $\dim \ITh(K) = \dim \Vh(K) - \dim \QKh$.

The kernel and a pseudo-inverse of $\bA$ can be computed using singular value decomposition.
The pseudo-inverse can be used to solve the local problem \eqref{eq:global-onT} and to obtain a particular solution.
It guarantees that the solution will be in a complementary space to the kernel $\ITh(K)$.
Note that, due to \cref{rem:indepL}, it is irrelevant whether the image of the pseudo-inverse is in the exact space $\UKh$ used for the analysis or a different complementary space. 
\end{remark}

\subsection{Embedded Trefftz DG for the advection-reaction equation}\label{sec:ar}
We briefly recall the DG discretization of the advection-reaction equation, following along the lines of \cite[Section 2]{DiPietroErn}.
Let us consider the advection-reaction equation 
\begin{eqs}\label{eq:arpde}
    \beta\cdot \nabla u + \gamma u &=f &&\quad \text{in } \Omega,\\
    u &= g_D &&\quad \text{on } \partial \Omega^-,
\end{eqs}
where $\partial \Omega^-:=\{x\in\partial\Omega\mid\beta\cdot\nf<0\}$ is the inflow boundary of $\Omega$
and the inflow boundary data $g_D\in H^{\frac12}(\partial\Omega^-)$.
Furthermore, let $\beta$ be Lipschitz in each component, i.e. $\beta\in [\mathrm{Lip}(\Omega)]^d$, and assume that there exists a constant $\gamma_0>0$ such that 
\begin{equation}\label{assumptiondiv}
    \gamma(x)-\frac12\mathrm{div}\big(\beta(x)\big)\ge \gamma_0  \quad \text{ a.e. } x \in\Omega.
\end{equation}
For simplicity we also assume $c_\gamma \leq \gamma(x) \leq C_\gamma$ and $c_\beta \leq \|\beta(x)\|_2 \leq C_\beta$ for $x\in\Omega$ for some constants $c_\gamma, C_\gamma, c_\beta, C_\beta>0$ and do not track the dependence of the constants on $c_\beta$, $C_\beta$, $c_\gamma$, $C_\gamma$ in the following.
Here, $\|\cdot\|_2$ denotes the Euclidean norm in $\mathbb{R}^d$.
Considering a source term $f\in L^2(\Omega)$, there exists a unique solution $u\in L^2(\Omega)$ with $\beta\cdot\nabla u\in L^2(\Omega)$.

The upwinding DG discretization of the advection-reaction equation then reads as follows: 
\begin{equation}\label{eq:arvar}
\text{Find } u_h \in \Vh \text{ such that }
a_h(u_h,v_h)=\ell_h(v_h) \quad \forall v_h \in \Vh,
\end{equation}
with the DG bilinear form for the advection-reaction equation
\begin{eqs} \label{eq:arah}
    a_h(u_h,v_h) := 
    &\inner{\beta \cdot \nabla u_h, v_h}_{\Th} + \inner{\gamma u_h, v_h}_{\Th}
    - \inner{(\beta\cdot\nf)\jmp{u_h},\avg{v_h}}_{\Fhi} \\
    &
    -\inner{(\beta\cdot\nf) u_h,v_h}_{\partial\Omega^-}
    + \frac12 \inner{|\beta\cdot \nf| \jmp{u_h},\jmp{v_h}}_{\Fhi},
\end{eqs}
and the linear form 
\begin{equation}\label{eq:arlh}
    \ell_h(v_h):=
    \inner{f,v_h}_\Th - \inner{g_D \beta\cdot\nf,v_h}_{\partial\Omega^-}.
\end{equation}

We now present the embedded Trefftz DG method for this setting. 
Based on the discussion in \cref{sec:embtrefftz} we choose for $v_h \in \Vh$ the operator $\AK $ as the localized strong form operator of the advection-reaction equation, i.e.
\begin{equation}\label{eq:arak}
\AK  v_h = h_K^{1/2} (\beta \cdot \nabla v_h + \gamma v_h) \quad \text{and} \quad 
\ell_K = h_K^{1/2} f.
\end{equation}
We set $\QKh = \IP^{p-1}(K)$ as the local test space, a choice motivated by a suitable prototype operator, which will be further elaborated in \cref{rem:ar_qkh}.
We note that $\QKh$ and the scaling in $\AK $ are chosen so that \cref{ass:local_strong} holds.
Now, as in \cref{sec:embtrefftz}, the local Trefftz space is given by \eqref{eq:IThproto}, resulting in
\begin{align*}
    \ITh(K)&= \{v_h \in \IP^p(K) \mid \inner{\AK  v_h, q_h}_K=0,\ \forall q_h\in \IP^{p-1}(K) \} .
\end{align*}
As discussed in \cref{sec:embtrefftz} this guarantees \cref{ass:inexacttrefftz} with $\rho = 0$. 

The embedded Trefftz DG method for the advection-reaction problem then reads:
\begin{equation}\label{eq:Tarvar}
\begin{alignedat}{3}
    \text{Find } u_h \in \Vh 
    \text{ such that } &&\quad\inner{\AK  u_h, q_h}_K&=\inner{h_K^{1/2}\!f,q_h}_K &&\quad \forall q_h\in \IP^{p-1}(K), K\in\Th,
 \\ \text{ and }  && \quad a_h(u_h,v_h)&=\ell_h(v_h) &&\quad \forall v_h \in \ITh,
\end{alignedat}
\end{equation}
with $a_h(\cdot,\cdot)$ given in \eqref{eq:arah} and $\ell_h(\cdot)$ given in \eqref{eq:arlh}.

For the analysis we define the norms
\begin{eqs}
\norm{v}^2_{\Vh}:=& \norm{v}^2_{\Omega}+ \sum_{F\in\calF_h} \norm{|\beta_r\cdot\nf|^{\frac12} \jmp{v}}_F^2 + 
\sum_{K\in\Th} h_K \norm{\beta_r\cdot\nabla v}^2_{K},
\\
\norm{v}^2_{\Vsh}:=&\norm{v}^2_{\Vh}+\sum_{K\in\Th} (h_K^{-1}\norm{v}^2_{K} + \norm{v}^2_{\partial K}),
\end{eqs}
where  $\beta_r = \beta / \norm{\beta}_{L^\infty(\Omega)}$. From the definition of the norms \cref{ass:locality} is obvious. Together with the DG setting this implies \eqref{eq:quasiortho:a} and \eqref{eq:quasiortho:b}. To obtain \eqref{eq:quasiortho:c}, and hence \cref{ass:local}, we make use of a prototype operator.

\subsubsection{The prototype operator for \texorpdfstring{\cref{lem:abstractneumann}}{}}\label{sec:arproto}

We introduce a prototype operator in order to make use of \cref{lem:abstractneumann}. 
This will allow us to show that \eqref{eq:quasiortho:c} holds for $\AK $.
We define the prototype operator for the advection-reaction problem as the version of $\AK $ that corresponds to locally constant coefficients in the PDE and neglects the lowest order term, i.e.
\begin{eqs}\label{eq:acak0}
    A_{K,0} v_h = h_K^{1/2} \bar \beta \cdot\nabla v_h ,
\end{eqs}
where $\bar \beta=\beta(x_K)$, where $x_K$ in a given point in the element $K\in\Th$.

\begin{remark}[Choice of $\QKh$ and optimality of $\IT_h$]\label{rem:ar_qkh}
    We observe that 
    \[
        \{A_{K,0} v\mid \forall v\in\Vh\}= \QKh=\IP^{p-1}(K),
    \] 
    which finally explains the choice of $\QKh$ in the definition of $\ITh$.
    Further, we note that the minimal dimension of the Trefftz space for the operator $A_{K,0}$ is $\dim \Vh(K)-\dim\QKh$.
    The dimension of $\ITh$ is optimal in that sense.
\end{remark}

\begin{figure}[!ht]
\begin{center}
\begin{tikzpicture}[>=latex, x=1.5cm, y=1.5cm, scale=.9, font=\footnotesize]
    \def\length{10*sqrt(1+(x+y)^2)}
    \begin{axis}[
        axis line style={draw=none},
        tick style={draw=none},
        xticklabel=\empty,
        yticklabel=\empty,
        view={0}{90},
        domain=0:1,
        samples=10,
        axis equal image,
    ]
    \addplot3[
        gray,
        quiver={
            u={1/(\length)},
            v={(x+y)/(\length)},
        scale arrows=1.0,
        every arrow/.append style={-latex}},
    ] {x};
    \addplot3[
        black,
        quiver={
            u={0.012/0.184},
            v={0.013/0.184},
        scale arrows=1.0,
        every arrow/.append style={-latex}},
    ] {x};
    \draw (0.0,0.2) node[anchor=north]{}
       -- (1.0,0.0) node[anchor=north]{}
       -- (0.6,1.0) node[anchor=south]{}
       -- cycle;
   \draw[gray, fill=teal, opacity=0.2] (0.5204,0.3988) circle[radius=0.297];
   \node[fill,circle,red,inner sep=1.5pt] (a) at (0.5204,0.3988) {};
   \node[] (b) at (-0.013/0.184 + 0.5204, 0.012/0.184 + 0.3988) {};
   \draw[red] (a) -- ($(a)!3cm!(b)$) -- ($(b)!4cm!(a)$);
    \end{axis}

    \draw[red] (4,3.5) -- (4.2,3.5);
    \node[anchor=west] at (4.2,3.5) {$\Gamma_{K,\beta}$};
    \draw[arrows=-stealth,gray] (4,3) -- (4.2,3);
    \node[anchor=west] at (4.2,3) {$\beta$};
    \draw[arrows=-stealth,black] (4,2.5) -- (4.2,2.5);
    \node[anchor=west] at (4.2,2.5) {$\bar\beta$};
    \draw[gray, fill=teal, opacity=0.2] (4.1,2.0) circle[radius=0.1];
    \node[anchor=west] at (4.2,2) {$B(K)$};
    \node[fill,circle,red,inner sep=1.5pt] (a) at (4.1,1.5) {};
    \node[anchor=west] at (4.2,1.5) {$x_K$};

\end{tikzpicture}
\begin{tikzpicture}[>=latex, x=1.5cm, y=1.5cm, scale=.9, font=\footnotesize]
    \def\length{10*sqrt(1+(x+y)^2)}
    \begin{axis}[
        axis line style={draw=none},
        tick style={draw=none},
        xticklabel=\empty,
        yticklabel=\empty,
        view={0}{90},
        domain=0:1.2,
        samples=9,
        axis equal image,
    ]
    \addplot3[
        black,
        quiver={
            u={1},
            v={0},
        scale arrows=0.08,
        every arrow/.append style={-latex}},
    ] {x};
   \draw[gray, fill=teal, opacity=0.2] (0.6,0.6) circle[radius=0.5];
   \draw[gray, thick, ->] (0.6,0) -- (0.6,1.2) node[anchor=north west] {$x_2$};
   \draw[gray, thick, ->] (0,0.6) -- (1.2,0.6) node[anchor=south east] {$x_1$};
   \draw[red, thick] (0.6,1.2) -- (0.6,0);
   \node[fill,circle,red,inner sep=2.5pt] (a) at (0.6,0.6) {};
   \node[anchor=north west] at (0.6,0.6) {$0$};

   \draw[gray]
   (0.6,0.6) -- (0.95355339,0.95355339);   
   \draw[decoration={brace,raise=0pt},decorate]
   (0.6,0.6) -- node[above left =0.05 pt] {$1$} (0.95355339,0.95355339);   
    \end{axis}

    \draw[red,thick] (4.2,3.5) -- (4.4,3.5);
    \node[anchor=west] at (4.4,3.5) {$\hat \Gamma$};
    \draw[arrows=-stealth,black] (4.2,3) -- (4.4,3);
    \node[anchor=west] at (4.4,3) {$\hat\beta = (1,0,..)$};
    \draw[gray, fill=teal, opacity=0.2] (4.3,2.5) circle[radius=0.1];
    \node[anchor=west] at (4.4,2.5) {$B_1$};

\end{tikzpicture}
\end{center}
\caption{
    Illustration of the flow fields $\beta$ and its approximation $\bar\beta$ in a triangle and the in-circle $B(K)$ as well as the hyperplane $\Gamma_{K,\beta}$ orthogonal to $\bar \beta$ (left). After translation, rotation and rescaling the configuration on an arbitrary element $K$ is mapped to a reference configuration (right).}
\label{fig:prototype}
\end{figure}
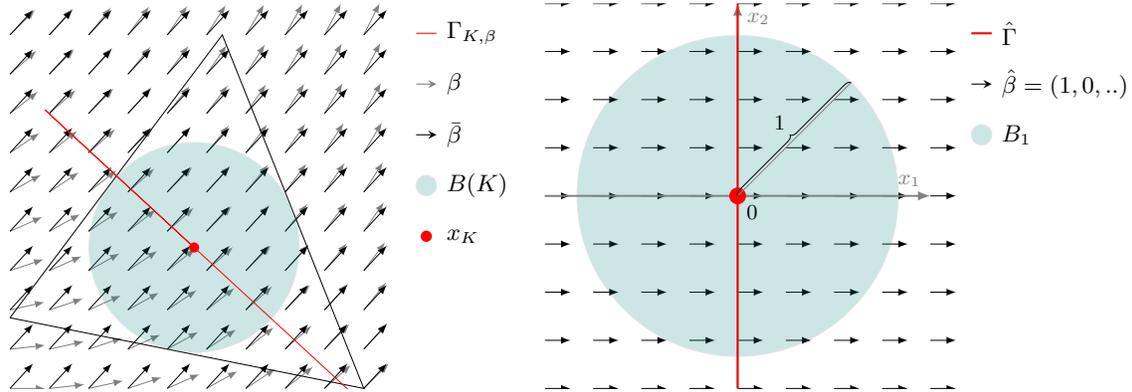
We now make the (arbitrary) choice of $x_K$ to be the center of the in-(hyper)circle $B(K)$ of the element $K$. 
In order to define a suitable space $\UKh$ we introduce on each element a hyperplane $\Gamma_{K,\beta}$, orthogonal to $\bar\beta$ passing through $x_K$.
The following lemma displays the crucial structure that we are going to exploit when 
defining suitable spaces $\UKh$, $\QKh$, i.e. that the $\Vh$-norm can be controlled by the sum of two parts: the $L^2$-norm of the directional derivative in the direction of $\bar \beta$ and the $L^2$-norm of the function on the hyperplane $\Gamma_{K,\beta}$.
\begin{lemma} \label{lem:loclemma}
    There holds 
\begin{align}
    \norm{u_h}_{\Vh}^2 \lesssim h_K^{-1} \norm{u_h}_{K}^2 \lesssim h_K \norm{\bar \beta \cdot \nabla u_h}_{K}^2 + \norm{u_h}_{\Gamma_{K,\beta}\cap B(K)}^2 \quad \forall u_h \in \IP^p(K), K \in \Th,
\end{align}
with $u_h$ extended by zero outside of $K$.
\end{lemma}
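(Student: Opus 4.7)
The stated inequality naturally splits into two parts, which I would treat independently.

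For the first inequality $\norm{u_h}_{V_h}^2 \lesssim h_K^{-1} \norm{u_h}_K^2$, my plan is to bound each summand of the $V_h$-seminorm restricted to $K$ using only classical inverse and discrete trace estimates for polynomials on $K$: the $L^2$-contribution is trivial since mesh sizes are bounded above, the facet jump contributions are controlled via the discrete trace inequality $\norm{u_h}_{\partial K}^2 \lesssim h_K^{-1}\norm{u_h}_K^2$, and the directional gradient term follows from the inverse estimate $\norm{\nabla u_h}_K \lesssim h_K^{-1}\norm{u_h}_K$ together with $|\bar\beta|\le C_\beta$. This is routine and I would dispatch it in a few lines.

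The second inequality $h_K^{-1}\norm{u_h}_K^2 \lesssim h_K\norm{\bar\beta\cdot\nabla u_h}_K^2 + \norm{u_h}_{\Gamma_{K,\beta}\cap B(K)}^2$ is the substantive one, and I plan to handle it by two successive reductions. First, using $u_h \in \IP^p(K)$ together with assumption \ref{ass:mesh1}, which provides $B(K)\subset K$ with $\diam B(K)\simeq h_K$ and a uniformly bounded ratio of circumscribed-to-inscribed radii, a standard equivalence-of-norms argument on the finite-dimensional space $\IP^p$ transported by an affine scaling will yield $\norm{u_h}_K^2 \lesssim \norm{u_h}_{B(K)}^2$, with a constant depending only on $p$ and the shape regularity. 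After this reduction, it suffices to dominate $h_K^{-1}\norm{u_h}_{B(K)}^2$ by the desired right-hand side.

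For the second reduction I would introduce the affine map $T(\hat x) = x_K + r_K R \hat x$ from the unit ball $B_1$ onto $B(K)$, where $r_K\simeq h_K$ is the radius of $B(K)$ and $R$ is a rotation aligning $e_1$ with $\bar\beta/|\bar\beta|$; this pulls the hyperplane $\Gamma_{K,\beta}$ back to $\hat\Gamma = \{\hat x_1=0\}\cap B_1$, and with $\hat u = u_h \circ T$ the usual scaling identities give $\norm{u_h}_{B(K)}^2 = r_K^d \norm{\hat u}_{B_1}^2$, $\norm{u_h}_{\Gamma_{K,\beta}\cap B(K)}^2 = r_K^{d-1}\norm{\hat u}_{\hat\Gamma}^2$ and $\norm{\bar\beta\cdot\nabla u_h}_{B(K)}^2 = |\bar\beta|^2 r_K^{d-2}\norm{\partial_1 \hat u}_{B_1}^2$. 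On the reference configuration it therefore suffices to establish the Poincar\'e--trace-type inequality $\norm{\hat u}_{B_1}^2 \lesssim \norm{\partial_1 \hat u}_{B_1}^2 + \norm{\hat u}_{\hat\Gamma}^2$, which holds for any $\hat u \in H^1(B_1)$: I would obtain it by writing $\hat u(\hat x_1,\hat x') = \hat u(0,\hat x') + \int_0^{\hat x_1}\partial_1 \hat u(s,\hat x')\,ds$, applying Cauchy--Schwarz to the integral and integrating in $\hat x$ over $B_1$. Combining the scaling relations with $r_K\simeq h_K$ and $c_\beta \le |\bar\beta| \le C_\beta$ then produces the stated estimate.

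The main obstacle, to the extent there is one, is the first reduction $\norm{u_h}_K^2 \lesssim \norm{u_h}_{B(K)}^2$, which genuinely uses polynomiality and introduces a $p$-dependent constant; the shape regularity in \ref{ass:mesh1} is precisely what keeps this constant uniform across the mesh sequence. The Poincar\'e step on $B_1$ is valid for all of $H^1(B_1)$ and thus introduces no further subtleties, and all remaining steps are routine bookkeeping of the affine change of variables.
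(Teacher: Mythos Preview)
Your proposal is correct and follows essentially the same route as the paper: inverse and trace inequalities for the first bound, then reduction from $K$ to the inscribed ball $B(K)$ via polynomial norm equivalence, an affine rotation--scaling map to the unit ball $B_1$ sending $\Gamma_{K,\beta}\cap B(K)$ to the equatorial disk $\hat\Gamma$, a reference estimate there, and transformation back. The one minor difference is that on $B_1$ the paper obtains $\norm{\hat u}_{B_1}^2 \lesssim \norm{\partial_1 \hat u}_{B_1}^2 + \norm{\hat u}_{\hat\Gamma}^2$ by invoking norm equivalence on the finite-dimensional space $\IP^p(B_1)$, whereas you prove it directly via the fundamental theorem of calculus and Cauchy--Schwarz; both work, and your version has the incidental bonus of being $p$-independent at that step.
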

\begin{proof}
    For $K\in\Th$ we denote by 
    $B(K)$ the in-(hyper)circle with center $x_K$ and radius $\rho_K$.
    Let further $Q \in \mathbb{R}^{d\times d}$ be a scaled (by $\norm{\bar\beta}_2$) orthogonal matrix that rotates 
    $\bar \beta$ to the unit vector $(1,0,\ldots)\in\mathbb{R}^d$. 
    With the invertible affine map $\Psi_K: B(K) \to B_1$, $\Psi_K(x) = \rho_K^{-1} Q \cdot (x-x_K)$ where $B_1$ is the unit ball around the origin, cf. \cref{fig:prototype} for a sketch, we define $\hat u_h = u_h \circ \Psi_K^{-1}$ for $u_h \in \IP^p(K)$ and estimate
    \begin{align*}
        \norm{u_h}_{\Vh}^2 & = \norm{u_h}_{K}^2 + \big\Vert |\beta_r \cdot n|^{1/2} u_h \big\Vert_{\partial K}^2 + h_K\norm{\beta_r\cdot\nabla u_h}_{K}^2 \lesssim 
        h_K^{-1} \norm{u_h}_{K}^2
        \lesssim h_K^{-1} \norm{u_h}_{B(K)}^2 = \dots
        \intertext{
            Here, we made use of standard inverse inequalities to go from the element $K$ to its in-(hyper)circle $B(K)$. Applying a transformation rule for $\Psi_K$ with $\operatorname{det}(\nabla\Psi_K) = \rho_K^{-d}$ we obtain
        }
        \dots & = h_K^{-1} \rho_K^{d} \norm{\hat u_h}_{B_1}^2
        \simeq h_K^{-1} \rho_K^{d} \big( \norm{ \partial_{x_1} \hat u_h}_{B_1}^2 + \norm{\hat u_h}_{\hat \Gamma}^2 \big) \lesssim \dots 
        \intertext{
            with $\hat \Gamma = \{ x \in B_1 \mid x_1 = 0 \} = \Psi_K (\Gamma_{K,\beta})$
            where we exploited norm equivalence of all norms on the full polynomial space with a constant independent of $K$. Transforming back we obtain
        }
        \dots  & \lesssim h_K \norm{ {\bar \beta} \cdot \nabla u_h}_{B(K)}^2 + \norm{u_h}_{\Gamma_{K,\beta}\cap B(K)}^2 
        \lesssim h_K \norm{ {\bar \beta} \cdot \nabla u_h}_{K}^2 + \norm{u_h}_{\Gamma_{K,\beta}\cap B(K)}^2,
    \end{align*}
    where we again made use of inverse inequalities.
\end{proof}

We can conclude that only the trivial function vanishes both on $\Gamma_{K,\beta}$ and under application of $A_{K,0}$, respectively the directional derivative $\bar \beta \cdot \nabla$.
Hence, we define $\UKh$ as
\begin{align*}
    \UKh=\{ v_h\in\IP^{p}(K) \mid \restr{v_h}{\Gamma_{K,\beta}}=0\}.
\end{align*}
Note again that the test space which matches the range of $\UKh$ is exactly
$\QKh=A_{K,0}\UKh=\IP^{p-1}(K)$.
To apply \cref{lem:abstractneumann} we now show that $A_{K,0}$ is bijective.
\begin{lemma}
    The operator $A_{K,0}:\UKh\rightarrow\QKh$ is invertible and has a bounded inverse.
Furthermore, for $u_h \in \UKh$ we have 
$$\norm{A_{K,0} u_h}_{\QThp} = \|h_K^{\frac{1}{2}} \bar \beta \cdot \nabla u_h\|_{\QThp} \gtrsim \norm{u_h}_{\Vh}^2.$$
\end{lemma}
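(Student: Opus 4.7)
The plan is to deduce both invertibility and the stated lower bound directly from \cref{lem:loclemma} together with the definition of $\UKh$. Concretely, the key observation is that the second term on the right-hand side of the estimate in \cref{lem:loclemma} vanishes for functions in $\UKh$, since by definition $u_h|_{\Gamma_{K,\beta}} = 0$. Hence, for any $u_h \in \UKh$,
\begin{equation*}
\norm{u_h}_{V_h}^2 \lesssim h_K \norm{\bar{\beta}\cdot\nabla u_h}_K^2 = \norm{A_{K,0} u_h}_{L^2(K)}^2.
\end{equation*}

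Next, I would upgrade this $L^2(K)$ bound to the dual norm bound. Since $A_{K,0} u_h = h_K^{1/2}\bar{\beta}\cdot\nabla u_h \in \IP^{p-1}(K) = \QKh$ (as $\QKh$ is equipped with the $L^2(K)$ inner product), the dual norm satisfies
\begin{equation*}
\norm{A_{K,0} u_h}_{\QKh'} = \sup_{q \in \QKh,\,\norm{q}_{L^2(K)}=1} (A_{K,0} u_h, q)_K = \norm{A_{K,0} u_h}_{L^2(K)},
\end{equation*}
where the supremum is attained by taking $q$ proportional to $A_{K,0} u_h$ itself. Combining with the previous display yields the claimed lower bound $\norm{A_{K,0} u_h}_{\QKh'} \gtrsim \norm{u_h}_{V_h}$.

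This lower bound immediately shows that $A_{K,0}:\UKh \to \QKh$ is injective. To conclude bijectivity (and hence boundedness of the inverse, as everything is finite-dimensional with $h_K$-tracked constants), I would verify that $\dim \UKh = \dim \QKh$. A direct count gives
\begin{equation*}
\dim \UKh = \dim \IP^p(K) - \dim \IP^p(\Gamma_{K,\beta}\cap K) = \binom{p+d}{d} - \binom{p+d-1}{d-1} = \binom{p+d-1}{d} = \dim \IP^{p-1}(K),
\end{equation*}
using Pascal's identity and noting that the trace on a hyperplane is exactly $\IP^p$ on that hyperplane. Together with injectivity, this yields bijectivity. I expect the main minor obstacle to be the careful bookkeeping of the dimension count and the identification of the dual norm on $\QKh$ with the $L^2(K)$ norm, rather than anything deep: the analytical heart has already been carried out in \cref{lem:loclemma}.
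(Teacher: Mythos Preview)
Your argument is correct and the analytical core---using \cref{lem:loclemma} together with $u_h|_{\Gamma_{K,\beta}}=0$ to obtain the lower bound, and identifying $\norm{A_{K,0}u_h}_{\QKh'}$ with $\norm{A_{K,0}u_h}_{L^2(K)}$ since $A_{K,0}u_h\in\QKh$---matches the paper exactly.

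The only difference is in establishing bijectivity. The paper constructs the inverse explicitly: given $w_h\in\IP^{p-1}(K)$, it integrates $w_h$ along the $\bar\beta$-direction starting from $\Gamma_{K,\beta}$ to produce $u_h\in\UKh$ with $A_{K,0}u_h=w_h$, proving surjectivity directly. You instead combine injectivity (from the lower bound) with the dimension count $\dim\UKh=\dim\IP^p(K)-\dim\IP^p(\Gamma_{K,\beta}\cap K)=\dim\IP^{p-1}(K)$. Both are perfectly valid; the paper's explicit inverse is perhaps more constructive, while your dimension argument is cleaner and avoids introducing the auxiliary $(\xi,\eta)$-coordinates.
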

\begin{proof}
    We first prove invertibility. Take $w_h\in \QKh=\IP^{p-1}(K)$.
    To $x \in K$, let $(\xi,\eta) \in \mathbb{R} \times \Gamma_{K,\beta}$ be the coordinates so that $\xi$ is the (signed) distance (oriented with $\bar\beta$) to the hyperplane $\Gamma_{K,\beta}$ and $x=\eta + \xi n_{\Gamma_{K,\beta}}$ so that $\bar \beta \cdot \nabla v_h(x) = |\bar \beta| \partial_\xi v_h(\eta +\xi n_{\Gamma_{K,\beta}})$ for any $v_h \in \IP^p(K)$.
    We can then define 
    $
    u_h(x) = u_h(\eta+\xi n_{\Gamma_{K,\beta}}) = \frac{h_K^{-1/2}}{|\bar\beta|} \int_{0}^{\xi} w_h(\eta+s n_{\Gamma_{K,\beta}}) \, ds \in \UKh
    $
    such that $w_h=A_{K,0} u_h = h_K^{1/2} \bar\beta\cdot\nabla u_h$. Further, we have for any $u_h \in \UKh$ with $w_h=A_{K,0} u_h$
\begin{align*}
    \Vert w_h \Vert_K^2 = \Vert A_{K,0} u_h \Vert_K^2 = \inner{A_{K,0} u_h,w_h}_K 
    &= \big\Vert h_K^{1/2}\bar\beta\cdot\nabla u_h \big\Vert_{K}^2 \gtrsim \norm{u_h}_{\Vh}^2,
\end{align*}
where we made use of \cref{lem:loclemma} in the last step.
\end{proof}

Since $A_{K,0}:\UKh\rightarrow\QKh$ is bijective we have that $\dim(\Uh) + \dim(\ITh) = \dim(\Vh)$ and therefore $\Vh = \Uh \oplus \ITh$ is a valid decomposition.

Next, we will show that for the choice $A_{K,0}$ for the prototype operator and the associated spaces the requirements in \cref{lem:abstractneumann} are satisfied.

\begin{lemma} \label{lem::ar_m_ar0} 
    We have for all $u_{\IL}\in \UKh$ that with $\omega = 1$ there holds
    \begin{align*}
        \| \omega \AK  u_{\IL} - A_{K,0} u_{\IL} \|_{K} \lesssim h_K \norm{u_{\IL}}_\mathrm{\Vh}.
    \end{align*}
\end{lemma}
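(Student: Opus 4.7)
The plan is to rewrite the difference of the two operators, use the Lipschitz regularity of $\beta$ and the boundedness of $\gamma$ to gain a factor of $h_K$, and then exploit the vanishing-trace condition built into $\UKh$ together with the scaling of the $V_h$-norm to close the estimate.

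First, a direct computation gives
\[
\omega \AK u_{\IL} - A_{K,0} u_{\IL} = h_K^{1/2}\big((\beta - \bar\beta)\cdot \nabla u_{\IL} + \gamma u_{\IL}\big),
\]
so by the triangle inequality
\[
\|\omega \AK u_{\IL} - A_{K,0} u_{\IL}\|_K \;\le\; h_K^{1/2}\|\beta - \bar\beta\|_{L^\infty(K)}\|\nabla u_{\IL}\|_K + h_K^{1/2}\|\gamma\|_{L^\infty(K)}\|u_{\IL}\|_K.
\]
Since $\beta \in [\mathrm{Lip}(\Omega)]^d$ and $\bar\beta$ is the mean value of $\beta$ on $K$, one has $\|\beta-\bar\beta\|_{L^\infty(K)}\lesssim h_K$, while $\|\gamma\|_{L^\infty(K)}$ is bounded by assumption. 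It therefore suffices to establish the two bounds
\begin{equation} \label{eq:proofprop1}
h_K^{1/2}\|\nabla u_{\IL}\|_K \;\lesssim\; \|u_{\IL}\|_{V_h},\qquad h_K^{1/2}\|u_{\IL}\|_K \;\lesssim\; h_K\|u_{\IL}\|_{V_h},
\end{equation}
as plugging these into the display above yields
$\|\omega \AK u_{\IL} - A_{K,0} u_{\IL}\|_K\lesssim (h_K + 1)\cdot h_K\|u_{\IL}\|_{V_h}\lesssim h_K\|u_{\IL}\|_{V_h}$.

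For both inequalities in \eqref{eq:proofprop1} the decisive point is that $u_{\IL}\in\UKh$ vanishes on the hyperplane $\Gamma_{K,\beta}$. Using the same change of variables $\Psi_K$ as in the proof of \cref{lem:loclemma} one reduces to the reference polynomial space $\IP^p(B_1)$ with vanishing trace on $\hat\Gamma$, where all norms are equivalent with constants depending only on $p$. Scaling back to $K$ yields the Poincaré-type estimate
\[
\|u_{\IL}\|_K \;\lesssim\; h_K\,\|\bar\beta\cdot\nabla u_{\IL}\|_K,\qquad \|\nabla u_{\IL}\|_K \;\simeq\; \|\bar\beta\cdot\nabla u_{\IL}\|_K.
\]
It remains to replace $\bar\beta\cdot\nabla u_{\IL}$ by $\beta_r\cdot\nabla u_{\IL}$ (which is directly controlled by $\|u_{\IL}\|_{V_h}$): writing $\bar\beta\cdot\nabla u_{\IL} = \|\beta\|_\infty\beta_r\cdot\nabla u_{\IL} + (\bar\beta-\beta)\cdot\nabla u_{\IL}$, the Lipschitz bound on $\beta$ and the equivalence above give
\[
\|\bar\beta\cdot\nabla u_{\IL}\|_K \;\lesssim\; \|\beta_r\cdot\nabla u_{\IL}\|_K + L\,h_K\|\nabla u_{\IL}\|_K \;\lesssim\; \|\beta_r\cdot\nabla u_{\IL}\|_K + Lh_K\|\bar\beta\cdot\nabla u_{\IL}\|_K,
\]
and for $h_K$ small enough the last term is absorbed into the left hand side. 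Combining with the definition $h_K\|\beta_r\cdot\nabla u_{\IL}\|_K^2\le \|u_{\IL}\|_{V_h}^2$ produces both inequalities in \eqref{eq:proofprop1} and finishes the proof.

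The main obstacle is the second bound in \eqref{eq:proofprop1}: a naive estimate $\|u_{\IL}\|_K\le \|u_{\IL}\|_{V_h}$ only gives $h_K^{1/2}\|u_{\IL}\|_{V_h}$ on the right and is therefore off by a factor of $h_K^{1/2}$. The gain of this missing factor is bought precisely by the boundary condition $u_{\IL}|_{\Gamma_{K,\beta}}=0$ built into $\UKh$, which is why the construction of $\UKh$ via a hyperplane orthogonal to $\bar\beta$ (cf. \cref{lem:loclemma}) is essential here and must be used explicitly.
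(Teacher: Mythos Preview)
Your proof is correct and follows essentially the same approach as the paper: both reduce to a Poincar\'e-type bound coming from the vanishing of $u_{\IL}$ on the hyperplane $\Gamma_{K,\beta}$ (i.e.\ the mechanism of \cref{lem:loclemma}), then relate the directional derivative back to the $V_h$-norm. The only organisational difference is that the paper first collapses the gradient term via the inverse inequality $\|u_{\IL}\|_{H^1(K)}\lesssim h_K^{-1}\|u_{\IL}\|_K$ and then bounds only $\|u_{\IL}\|_K$, whereas you keep the two terms separate and use the equivalence $\|\nabla u_{\IL}\|_K\simeq\|\bar\beta\cdot\nabla u_{\IL}\|_K$ plus an absorption step; this avoids nothing essential and the core argument is the same.
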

\begin{proof}
    With the Lipschitz bound on $\beta$ we have for all $u_{\IL} \in \UKh$
    \[ 
    \begin{aligned}
        h_K^{-1/2}\| \AK   u_{\IL} -  A_{K,0}u_{\IL} \|_{K}   
                                       &  \leq \norm{\beta - \bar\beta}_{L^\infty(K)} \norm{u_{\IL}}_{H^1(K)} + C_\gamma \norm{u_{\IL}}_{K} 
                                          \lesssim (L_\beta + C_\gamma) \norm{u_{\IL}}_{K} \\
                                       & \lesssim \norm{u_{\IL}}_{B(K)}
                                         \lesssim h_K \norm{ {\bar \beta} \cdot \nabla u_{\IL}}_{B(K)}
                                         \lesssim h_K^{1/2} \norm{ A_{K,0} u_{\IL}}_{K},
    \end{aligned}
    \]
    where $L_\beta$ is the Lipschitz constant of $\beta$ and $C_\gamma$ is the $L^\infty$-norm of $\gamma$ on $K$.
    For the last line of estimates we used a scaling argument, made use of \cref{lem:loclemma} and $u_{\IL}=0$ on $\Gamma_{K,\beta}$, and finally the definition of the $\Vh$-norm.
\end{proof}

Using \Cref{lem:abstractneumann} together with the last two lemmas we conclude that  \eqref{eq:quasiortho:c} holds for $A_{K}$ and hence \cref{ass:local} holds.

\subsubsection{Coercivity on the Trefftz space}

It only remains to check \cref{ass:assumption_Th} before we can conclude stability of the coupled problem.
We start with a preparatory lemma:
\begin{lemma}\label{lem:arnormeq}
    For all $u\in\ITh$ we have that 
    \begin{eqs}
        \sum_{K\in\Th}\norm{\beta \cdot\nabla u}_{K}^2 \leq C_{\beta,\gamma}\sum_{K\in\Th} \norm{u}_{K}^2,
    \end{eqs}
    with $C_{\beta,\gamma}=\abs{\beta}_{\mathrm{Lip}(\Omega)} + \norm{\gamma}_{L^\infty(\Omega)}$ and independent of $h_K$.
\end{lemma}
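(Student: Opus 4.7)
The plan is to exploit the Trefftz condition $\Pi^{p-1}(\beta\cdot\nabla u + \gamma u) = 0$ (which, after dividing by $h_K^{1/2}$, is what $u \in \ITh(K)$ means with the operator $\AK$ as chosen in \eqref{eq:arak}) together with an averaging/inverse-inequality argument element by element. The key observation is that freezing the coefficient $\beta$ at its elemental average $\bar\beta$ yields a directional derivative of degree $p-1$, so the $L^2$-projection acts trivially on it. This allows the Trefftz condition to turn into an identity for $\bar\beta\cdot\nabla u$ without incurring a factor $h_K^{-1}$ on the leading part.

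Concretely, I would first fix an element $K\in\Th$ and $u\in\ITh(K)$, write $\beta = \bar\beta + (\beta-\bar\beta)$, and note that $\bar\beta\cdot\nabla u \in \IP^{p-1}(K)$. Since $\Pi^{p-1}$ is the identity on $\IP^{p-1}(K)$, the Trefftz condition rearranges to
\begin{equation*}
    \bar\beta\cdot\nabla u \;=\; -\,\Pi^{p-1}\bigl((\beta-\bar\beta)\cdot\nabla u + \gamma u\bigr) \quad \text{in } K.
\end{equation*}
Using the $L^2$-stability of $\Pi^{p-1}$, the Lipschitz bound $\norm{\beta-\bar\beta}_{L^\infty(K)} \le |\beta|_{\mathrm{Lip}(\Omega)} h_K$, and the standard polynomial inverse inequality $\norm{\nabla u}_K \lesssim h_K^{-1}\norm{u}_K$, the $h_K$ and $h_K^{-1}$ factors cancel and I obtain
\begin{equation*}
    \norm{\bar\beta\cdot\nabla u}_K \;\lesssim\; \bigl(|\beta|_{\mathrm{Lip}(\Omega)} + \norm{\gamma}_{L^\infty(\Omega)}\bigr)\norm{u}_K.
\end{equation*}

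Then a triangle inequality $\norm{\beta\cdot\nabla u}_K \le \norm{\bar\beta\cdot\nabla u}_K + \norm{(\beta-\bar\beta)\cdot\nabla u}_K$ together with the same Lipschitz-and-inverse-inequality argument on the second term controls the full gradient contribution by $C_{\beta,\gamma}\norm{u}_K$. Summing the squares over $K\in\Th$ yields the claim. The only step that requires attention is tracking that the geometric constants hidden in $\lesssim$ (from the inverse inequality and shape regularity) are independent of $h_K$ under the mesh assumptions \ref{ass:mesh1}--\ref{ass:mesh2}; apart from that, no further obstacle is expected.
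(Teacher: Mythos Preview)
Your proposal is correct and follows essentially the same line as the paper: both freeze $\beta$ at its element average $\bar\beta$, exploit that $\bar\beta\cdot\nabla u\in\IP^{p-1}(K)$ so that the Trefftz condition yields $\bar\beta\cdot\nabla u=-\Pi^{p-1}\bigl((\beta-\bar\beta)\cdot\nabla u+\gamma u\bigr)$, and then combine the Lipschitz bound $\norm{\beta-\bar\beta}_{L^\infty(K)}\le h_K|\beta|_{\mathrm{Lip}}$ with the inverse inequality so that the $h_K$ and $h_K^{-1}$ cancel. The paper merely organizes the triangle inequality slightly differently, splitting $\norm{\beta\cdot\nabla u}_K$ directly into the three pieces $\norm{(\beta-\bar\beta)\cdot\nabla u}_K$, $\norm{\Pi^{p-1}(\bar\beta-\beta)\cdot\nabla u}_K$, and $\norm{\Pi^{p-1}\gamma u}_K$, but the ingredients and the resulting bound are identical.
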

\begin{proof}
    For $u\in\ITh$ we have that $\AK = \Pi^{p-1}\AK  u=\Pi^{p-1}\beta\cdot\nabla u + \Pi^{p-1}\gamma u = 0$ and therefore,
    for any $K\in\Th$
\begin{align*}
    \norm{\beta \cdot\nabla u}_{K} 
    &\leq \norm{(\beta - \bar\beta) \cdot\nabla u}_{K} + \norm{ \bar\beta\cdot\nabla u - \Pi^{p-1} \beta\cdot\nabla u}_{K} + \norm{\Pi^{p-1}\gamma u}_{K}\\
    &\leq \norm{(\beta - \bar\beta) \cdot\nabla u}_{K} + \norm{ \Pi^{p-1} \big(  \bar\beta - \beta \big) \cdot\nabla u}_{K} + \norm{\Pi^{p-1}\gamma u}_{K}\\
    &\lesssim \norm{(\beta - \bar\beta) \cdot\nabla u}_{K}+ \norm{\Pi^{p-1}\gamma u}_{K}\\
        &\leq \norm{\beta-\bar\beta}_{L^\infty(K)}\norm{\nabla u}_{K} + \norm{\gamma}_{L^\infty(K)}\norm{u}_{K}\\
                                        &\leq h_K\abs{\beta}_{\mathrm{Lip}(K)}\norm{\nabla u}_{K} + \norm{\gamma}_{L^\infty(K)}\norm{u}_{K}\\
                                    &\leq (\abs{\beta}_{\mathrm{Lip}(K)} + \norm{\gamma}_{L^\infty(K)})\norm{u}_{K},
\end{align*}
where we have used the assumed regularity of $\beta$ and the inverse inequality.
\end{proof}

\begin{corollary}\label{th:arwellposed}
    The bilinear form $a_h(\cdot,\cdot)$ given in \eqref{eq:arah} is coercive on $\ITh$ and continuous on $\Vsh\times\ITh$, 
and hence \cref{ass:assumption_Th,ass:cont_Th} hold.
\end{corollary}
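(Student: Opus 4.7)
The plan is to verify coercivity of $a_h(\cdot,\cdot)$ on $\ITh$ (which gives \cref{ass:assumption_Th} with $\tch$ a scaled identity, since $V_h=W_h$ and the situation is symmetric) and continuity on $\Vsh \times \ITh$ (which gives \cref{ass:cont_Th}, noting $\tch\ITh=\ITh$). Both estimates hinge on the Trefftz property through \cref{lem:arnormeq}, whose essential content is the element-wise inverse-type bound $\norm{\beta\cdot\nabla u}_K \lesssim \norm{u}_K$ for $u\in\ITh$, \emph{with a constant independent of $h$}.

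For coercivity, the standard DG analysis of $a_h(u_h,u_h)$---integration by parts on the volume advection term, regrouping facet contributions with the upwind penalty and the inflow boundary term---yields
\begin{equation*}
a_h(u_h,u_h) \;\ge\; \inner{(\gamma-\tfrac{1}{2}\div\beta)u_h, u_h}_\Th + \tfrac{1}{2}\!\!\sum_{F\in\Fh}\big\||\beta\cdot\nf|^{1/2}\jmp{u_h}\big\|_F^2 \;\gtrsim\; \norm{u_h}_\Omega^2 + \!\!\sum_{F\in\Fh}\big\||\beta_r\cdot\nf|^{1/2}\jmp{u_h}\big\|_F^2,
\end{equation*}
using assumption \eqref{assumptiondiv} together with the uniform lower and upper bounds on $|\beta|$. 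This controls two of the three contributions to $\norm{u_h}_{V_h}^2$. For the missing streamline-derivative piece $\sum_K h_K\norm{\beta_r\cdot\nabla u_h}_K^2$, I would restrict to $u_h\in\ITh$ and invoke \cref{lem:arnormeq} to obtain $\sum_K h_K\norm{\beta_r\cdot\nabla u_h}_K^2 \lesssim h\,\norm{u_h}_\Omega^2$, which (for $h$ uniformly bounded) is absorbed into $a_h(u_h,u_h)$.

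For continuity on $\Vsh\times\ITh$, the naïve estimate keeping the derivative on $u$ fails: $\norm{u}_{\Vsh}$ only gives $\norm{\beta\cdot\nabla u}_K\lesssim h_K^{-1/2}\norm{u}_{\Vsh}$, and the subsequent Cauchy--Schwarz would demand $(\sum_K h_K^{-1}\norm{v}_K^2)^{1/2}\lesssim\norm{v}_{V_h}$, which is not available. I would therefore integrate by parts elementwise to move the derivative onto $v\in\ITh$:
\begin{equation*}
\inner{\beta\cdot\nabla u, v}_\Th = -\inner{u, \beta\cdot\nabla v}_\Th - \inner{u, v\,\div\beta}_\Th + \!\!\sum_{F\in\Fhi}\!\!\int_F(\beta\cdot\nf)\big(\avg{u}\jmp{v}+\jmp{u}\avg{v}\big) + \int_{\partial\Omega}(\beta\cdot\nf)uv.
\end{equation*}
Substituting back into $a_h(u,v)$ produces two cancellations: the $\jmp{u}\avg{v}$ facet piece cancels with $-\inner{(\beta\cdot\nf)\jmp{u},\avg{v}}_\Fhi$, and the $\partial\Omega^-$ contribution cancels with $-\inner{(\beta\cdot\nf) u, v}_{\partial\Omega^-}$. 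Each of the remaining terms is then bounded by $\norm{u}_{\Vsh}\norm{v}_{V_h}$: the volume term $\inner{u,\beta\cdot\nabla v}_\Th$ by \cref{lem:arnormeq}, the mass-type term $\inner{(\gamma-\div\beta)u,v}_\Th$ by Cauchy--Schwarz, and the $(\avg{u},\jmp{v})$ facet term, the outflow boundary term and the upwind stabilization by weighted Cauchy--Schwarz, using that $\norm{u}_{\Vsh}$ controls the element traces $\norm{u}_{\partial K}$ and that $\norm{v}_{V_h}$ controls $\||\beta_r\cdot\nf|^{1/2}\jmp{v}\|_{\Fh}$.

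The main obstacle is exactly this asymmetric norm pairing in the continuity estimate: the test argument $v$ is allowed only in the weaker $\norm{\cdot}_{V_h}$, whereas the natural DG manipulations would otherwise produce $\beta\cdot\nabla v$ terms demanding the stronger $\norm{\cdot}_{\Vsh}$. The Trefftz inverse-type estimate of \cref{lem:arnormeq} is precisely what bridges this gap, trading a derivative on $v$ for an $L^2$-norm of $v$ with an $h$-independent constant; the very same lemma also supplies the streamline-derivative contribution in coercivity. The rest of the proof is routine DG bookkeeping.
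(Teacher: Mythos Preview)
Your proposal is correct, and your coercivity argument is essentially identical to the paper's. For continuity, however, you are working harder than necessary. After integrating by parts you bound $\inner{u,\beta\cdot\nabla v}_\Th$ via \cref{lem:arnormeq}, using the Trefftz property of $v\in\ITh$. The paper's one-line proof instead refers to the standard argument (as in \cite[Section~2]{DiPietroErn}): the same term is estimated by the weighted Cauchy--Schwarz inequality
\[
|\inner{u,\beta\cdot\nabla v}_\Th| \lesssim \Big(\sum_K h_K^{-1}\norm{u}_K^2\Big)^{1/2}\Big(\sum_K h_K\norm{\beta_r\cdot\nabla v}_K^2\Big)^{1/2} \le \norm{u}_{\Vsh}\,\norm{v}_{V_h},
\]
using the $h_K^{-1}$-weighted $L^2$ term in $\norm{\cdot}_{\Vsh}$ against the streamline-derivative term already present in $\norm{\cdot}_{V_h}$. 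This gives continuity on all of $\Vsh\times V_h$, not just $\Vsh\times\ITh$. So your framing of \cref{lem:arnormeq} as the essential bridge for continuity is slightly off: it is indispensable for coercivity (to recover the streamline contribution of $\norm{\cdot}_{V_h}$ from $a_h(v,v)$), but continuity goes through without it.
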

\begin{proof}
    Using integration by parts on the advection term we have for all $v_h\in\ITh$
\begin{align*} \label{eq:advection2}
    a_h(v_h,v_h) 
    =\ & \inner{(\gamma - \frac12\Div \beta)v_h,v_h}_{\Th} + \sum_{K\in\Th}\frac12\inner{(\beta\cdot\nf) v_h,v_h}_{\partial K}
      - \inner{(\beta\cdot\nf)\jmp{v_h},\avg{v_h}}_{\Fhi} \\
       &- \inner{(\beta\cdot\nf)v_h,v_h}_{\partial\Omega^-} + \frac12\sum_{F\in\Fhi}\int_F|\beta\cdot n_F|\jmp{v}^2.
\end{align*}
Now using that $\frac12 \jmp{v_h^2}=\avg{v_h}\jmp{v_h}$ the second and third term cancel out on inner facets and by summing the terms on the boundary facets we get
\begin{align*} 
    a_h(v_h,v_h) 
    =\ & \inner{(\gamma - \frac12\Div \beta)v_h,v_h}_{\Th} 
    + \frac12\int_{\partial\Omega}|\beta\cdot\nf|v_h^2 + \frac12\sum_{F\in\calF_h}\int_F|\beta\cdot n_F|\jmp{v}^2.
\end{align*}
With assumption \eqref{assumptiondiv} and \cref{lem:arnormeq} this proves coercivity.
Continuity follows from application of the Cauchy-Schwarz inequality and the definition of the norm $\Vert \cdot \Vert_{\Vsh}$.
\end{proof}

\subsubsection{Putting it all together}\label{sec:putar}
\begin{corollary}\label{cor:arput}
    Let $u\in H^{s+1}(\Omega)$ be the solution to the weak form of problem \eqref{eq:arpde} and $u_h$ the solution of the Trefftz DG problem \eqref{eq:Tarvar}.
Set $m = \min\{s,p\}$.
We have the following error estimate
\begin{equation}
    \Vert u - u_h \Vert_{\Vh} \lesssim \inf_{v_h \in \Vh} \Vert u - v_h \Vert_{\Vsh} \lesssim h^{m+1/2} |u|_{H^{m+1}(\Th)}.
\end{equation}
\end{corollary}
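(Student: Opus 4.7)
My plan is to apply the Strang-type result \cref{cor:cea} and then separately bound the approximation term on the right-hand side by standard polynomial approximation theory. The main work has actually already been done in \cref{sec:ar}: the local stability \cref{ass:local} follows from \cref{lem:loclemma}, \cref{lem::ar_m_ar0} and \cref{lem:abstractneumann} together with \cref{cor:locality}; the continuity \cref{ass:local_strong} holds by the scaling $h_K^{1/2}$ built into $A_K$ in \eqref{eq:arak} and the definition of $\norm{\cdot}_{\Vsh}$; the global stability \cref{ass:assumption_Th} and continuity \cref{ass:cont_Th} are exactly the content of \cref{th:arwellposed}; and the weak coupling \cref{ass:inexacttrefftz} holds trivially with $\rho=0$ since $\ITh(K)$ is defined as the kernel of $\Pi^{p-1}A_K$.

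The first step is to verify that both consistency terms in \eqref{eq:global-approx} vanish for the exact solution $u \in H^{s+1}(\Omega)$ with $s\geq 1$. For the local term, since $u$ satisfies the PDE strongly a.e. in each $K$, we have $A_K u = h_K^{1/2}(\beta\cdot\nabla u + \gamma u) = h_K^{1/2} f = \ell_K$, so $\norm{A_\Th u - \ell_\Th}_{\QThp}=0$. For the global term I integrate by parts in \eqref{eq:arah} elementwise (as in the derivation preceding \cref{th:arwellposed}), use that $\jmp{u}=0$ across interior faces and that $u=g_D$ on $\partial\Omega^-$, and exploit the PDE again to conclude $a_h(u,v_h) = \ell_h(v_h)$ for all $v_h \in \tch\ITh \subset V_h$; this is the classical consistency of the upwind DG scheme. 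Consequently \cref{cor:cea} yields the first inequality $\norm{u-u_h}_{V_h} \lesssim \inf_{v_h \in V_h} \TnormDG{u-v_h}$.

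The second inequality is a routine polynomial approximation estimate on the full space $V_h = \IP^p(\Th)$, which is the key feature highlighted after \cref{cor:cea}: one does \emph{not} need to approximate from the (weak) Trefftz subspace $\ITh$ but only from $V_h$. I pick $v_h$ as the elementwise $L^2$-orthogonal projection of $u$ onto $\IP^p(K)$ (or a Scott--Zhang--type interpolant). Under the mesh regularity \ref{ass:mesh1}--\ref{ass:mesh2}, the Bramble--Hilbert lemma and a standard multiplicative trace inequality give, for each $K \in \Th$ and $m=\min\{s,p\}$,
\[
    h_K^{-1}\norm{u-v_h}_K^2 + \norm{u-v_h}_{\partial K}^2 + h_K\norm{\nabla(u-v_h)}_K^2 \lesssim h_K^{2m+1}|u|_{H^{m+1}(K)}^2,
\]
and the face jump and boundary contributions in $\norm{\cdot}_{V_h}$ are controlled by the same quantity since $u \in H^{s+1}(\Omega)$ is single-valued across facets. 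Summing over $K \in \Th$ yields $\TnormDG{u-v_h}\lesssim h^{m+1/2}|u|_{H^{m+1}(\Th)}$, which combined with the Strang bound completes the proof.

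The only mildly delicate step is the second consistency statement $a_h(u,\cdot)=\ell_h(\cdot)$: one must check that when the (sufficiently regular) exact solution is inserted, the jump and upwind terms on interior faces and the inflow boundary term combine consistently with the volume integration by parts. This is standard for upwind DG for advection-reaction, so no genuine difficulty arises; everything else is either already established earlier in the section or is classical polynomial approximation.
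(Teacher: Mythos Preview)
Your proposal is correct and follows essentially the same route as the paper: verify \cref{ass:local,ass:local_strong,ass:assumption_Th,ass:cont_Th,ass:inexacttrefftz} by invoking the preparatory results of \cref{sec:ar}, check local and global consistency so that the two extra terms in \eqref{eq:global-approx} vanish, apply \cref{cor:cea}, and finish with standard polynomial approximation in $\TnormDG{\cdot}$. The only difference is cosmetic: the paper writes out the Cauchy--Schwarz bound for \cref{ass:local_strong} explicitly, whereas you appeal to the $h_K^{1/2}$-scaling and the definition of $\norm{\cdot}_{\Vsh}$ in words; both amount to the same one-line estimate.
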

\begin{proof}
    Due to \cref{th:arwellposed} coercivity and continuity hold for the problem on the Trefftz space $\ITh\subset \Vh$, and as a result \cref{ass:assumption_Th,ass:cont_Th} hold.
    As discussed in \cref{sec:embtrefftz} we have that \cref{ass:inexacttrefftz} is satisfied with $\rho = 0$. 
    Hence, our choice of local operator \eqref{eq:arak} satisfies \cref{ass:locality}. 
    We have shown \cref{eq:quasiortho:c} in \cref{sec:arproto}.
    We can now apply \cref{cor:locality}, showing that \cref{ass:local} is satisfied.

    We now prove \cref{ass:local_strong}: For any $u\in \Vsh$ we have that 
    \begin{align*}
        \norm{\ATh u }_{\QThp} = 
        \sup_{q\in \IP^{p-1}(\Th)} \frac{\inner{h_K^{1/2} (\beta \cdot \nabla v_h + \gamma v_h),q}_\Th}{\norm{q}_{L^2(\Th)}}  
\leq \norm{h_K^{1/2} \beta \cdot \nabla v_h + \gamma v_h}_{L^2(\Th)}        \lesssim \TnormDG{u}, 
    \end{align*}
    using Cauchy–Schwarz inequality, triangle inequality, and $h_K\lesssim 1$.

    Further we have global and local consistency, i.e. $a_h(u_h,v_h) = a_h(u,v_h),\ \forall v_h\in \ITh$ and $A_K u_h = A_K u$ (in $\QKh^\prime$). 
    Hence, we can apply \cref{cor:cea} to obtain the error estimate.
\end{proof}

\subsubsection{Numerical example}
We choose $\Omega=(0,1)^2$ with a sequence of uniform triangular meshes and 
\begin{align}\label{eq:exar}
\beta=(-x_1,x_2)^T,\ 
\gamma=x_1+x_2,\
u_{\mathrm{ex}} = \sin\big(\pi(x_1\!+\!x_2)\big).
\end{align}
The right-hand side $f$ is constructed in order to manufacture the solution $u_{\mathrm{ex}}$.
Dirichlet boundary conditions match the exact solution.

In \cref{fig:exar} we show the convergence rate of the Trefftz DG method for the problem \eqref{eq:arpde} and compare it to the standard DG method for different polynomial degrees $p=3,4,5$.
For both methods we observe the expected convergence rates for the $\Vh$-error of $h^{p+1/2}$.
The $L^2$-error converges with the rate $h^{p+1}$, 

\begin{figure}[ht!]\centering
\resizebox{.8\linewidth}{!}{
\begin{tikzpicture}
	\begin{groupplot}[
	group style={
	group name={my plots},
	group size=2 by 1,
	horizontal sep=2cm,
	},
	legend style={
	legend columns=8,
	at={(0.8,-0.2)},
	draw=none
	},
	ymajorgrids=true,
	grid style=dashed,
	cycle list name=paulcolors2,
	]      

	\nextgroupplot[ymode=log,xmode=log,x dir=reverse, ylabel={$L^2$-error},xlabel={$h$}]
    \foreach \k in {3,4,5}{
    \addplot+[discard if not={p}{\k},discard if not={method}{et}] table [x=h, y=l2error, col sep=comma] {ex/ar2d.csv};
    \addplot+[discard if not={p}{\k},discard if not={method}{dg}] table [x=h, y=l2error, col sep=comma] {ex/ar2d.csv};
    }
    \addlegendimage{solid}
	\addplot[domain=0.06:1.0] {exp(-4*ln(1/x)-3.2)};
	\addplot[domain=0.06:1.0] {exp(-5*ln(1/x)-4.6)};
	\addplot[domain=0.06:1.0] {exp(-6*ln(1/x)-6.3)};

	\nextgroupplot[ymode=log,xmode=log,x dir=reverse, ylabel={$\Vh$-error},xlabel={$h$}]
	\foreach \k in {3,4,5}{
	\addplot+[discard if not={p}{\k},discard if not={method}{et}] table [x=h, y=dgerror, col sep=comma] {ex/ar2d.csv};
    \addplot+[discard if not={p}{\k},discard if not={method}{dg}] table [x=h, y=dgerror, col sep=comma] {ex/ar2d.csv};
	}
	\addlegendimage{solid}
	\addplot[dashed,domain=0.06:1.0] {exp(-3.5*ln(1/x)-0.5)};
	\addplot[dashed,domain=0.06:1.0] {exp(-4.5*ln(1/x)-2.0)};
	\addplot[dashed,domain=0.06:1.0] {exp(-5.5*ln(1/x)-4.5)};
    \legend{$\IT^3$,$\IP^3$,$\IT^4$,$\IP^4$,$\IT^5$,$\IP^5$,$\mathcal O(h^{p+1})$,{$\mathcal O(h^{p+1/2})$ for $p=3,4,5$},}
	\end{groupplot}
\end{tikzpicture}}
\vspace{-0.5em}
\caption{
    Convergence of the Trefftz DG method for the problem \eqref{eq:arpde} with exact solution \eqref{eq:exar}.
    The left plot shows the $L^2$-error and the right plot the $\Vh$-error.
    We compare the Trefftz DG method with the standard DG method, plotted with dashed lines.
    The black lines indicate the expected convergence rates.
}
\label{fig:exar}
\end{figure}

\subsection{Embedded Trefftz DG for the diffusion-advection-reaction equation}\label{sec:dar}
Let $f\in L^2(\Omega)$, $g_D\in H^{\frac12}(\partial\Omega)$ and set $V=H^1(\Omega)\cap H^2(\Th)$.
We consider the following boundary value problem for the diffusion--advection--reaction equation:
\begin{eqs}\label{eq:darpde}
-\mathrm{div}(\alpha \nabla u) + (\beta\cdot\nabla) u +\gamma u &= f  && \quad \text{in  }\Omega,\\
u&=g_{\mathrm D} && \quad\text{on } \partial\Omega. 
\end{eqs}
We consider the following DG discretization of problem \eqref{eq:darpde}:  
\begin{equation}\label{eq:darvar}
\text{Find } u_h \in \Vh \text{ such that }
a_h(u_h,v_h)=\ell_h(v_h) \quad \forall v_h \in \Vh,
\end{equation}
with a standard DG bilinear form $a_h: V_{*h} \times \Vh \to \mathbb{R}$ that combines the interior penalty method for the treatment of the diffusion term and the upwind DG method for the advection term,
\begin{eqs}\label{eq:darah}
a_h(w,v_h):=&
\inner{\alpha \nabla w, \nabla v_h}_{\Th} + \inner{\beta \cdot \nabla w, v_h}_{\Th} + \inner{\gamma w, v_h}_{\Th}\\
    &- \inner{\avg{\alpha   \nabla w \cdot\nf},\jmp{v_h}}_{\Fhi} 
    - \inner{\jmp{w},\avg{\alpha   \nabla v_h\cdot\nf}}_{\Fhi} 
    + \inner{\sigma\frac{\alpha_F}{h_F}\jmp{w},\jmp{v_h}}_{\Fhi}\\
    &- \inner{\alpha   \nabla w \cdot \nf, v_h}_{\partial\Omega} 
    - \inner{w, \alpha   \nabla v_h \cdot \nf}_{\partial\Omega} 
    + \inner{\sigma\frac{\alpha_F}{h_F} w, v_h}_{\partial\Omega}\\
    &- \inner{(\beta\cdot\nf)\jmp{w},\avg{v_h}}_{\Fhi} 
    + \frac12 \inner{|\beta\cdot \nf| \jmp{w},\jmp{v_h}}_{\Fhi}
    -\inner{(\beta\cdot\nf) w,v_h}_{\partial\Omega^-},
\end{eqs}
and the linear form $\ell_h:\Vh\to\IR$, 
defined by 
\begin{equation}\label{eq:darlh}
\ell_h(v_h):=
\inner{f,v_h}_{\Th} - \inner{g_{\mathrm D},\alpha   \nabla v_h \cdot \nf}_{\partial\Omega} + \inner{g_{\mathrm D},\sigma\frac{\alpha_F}{h_F} v_h}_{\partial\Omega} - \inner{g_{\mathrm D}\beta\cdot\nf,v_h}_{\partial\Omega^-}.
\end{equation}
The bilinear form $a_h(\cdot,\cdot)$ depends on the parameters $\sigma,\alpha_F>0$ that penalize the jumps of the function values.
The quantity $\sigma>0$ is a dimensionless constant independent of the diffusion coefficient $\alpha$, while $\alpha_F$ is a possibly weighted average of the diffusion parameter defined on each facet such that $\alphamin \leq \alpha_F\leq \norm{\alpha}_{L^{\infty}(K_1\cup K_2)}$ for all $F\in \calF_h^{\mathrm I}$ with $F=\partial K_1 \cap \partial K_2$, and $\alphamin \leq \alpha_F\leq \norm{\alpha}_{L^{\infty}(K)}$ for all $F\in \calF_h^{\mathrm B}$ with $F\subset\partial K \cap \partial\Omega$.
For the diffusion coefficient we assume that $\alpha \in W^{1,\infty}(\Th)$ and that it is strictly positive. 
We further assume $\beta\in \left[W^{1,\infty}(\Omega)\right]^{d},$ and $ \gamma \in L^{\infty}(\Omega)$.
We will not track the dependence on $\norm{\beta}_{L^\infty(\Th)}$ and $\norm{\gamma}_{L^\infty(\Th)}$.

To introduce the embedded Trefftz DG for the problem \eqref{eq:darpde} we follow the steps outlined in \cref{sec:embtrefftz}.
To this end we define the local operators
\begin{eqs}\label{eq:darak}
    &\AK  v_h = h_K(-\Div(\alpha \nabla v_h) + \beta \cdot \nabla v_h + \gamma v_h)
    \text{ and }
    \ell_K = h_K f.
\end{eqs}
As local test space we choose $\QKh = \IP^{p-2}(K)$. 
As for the advection-reaction problem, this choice is influenced be the highest order operator of the PDE, and is made with a prototype operator in mind, thus with the chosen $\QKh$ and the scaling in $\AK $ \cref{ass:local_strong} holds. 
Now, as in \cref{sec:embtrefftz}, the Trefftz space is given by \eqref{eq:IThproto}, resulting in
\begin{align*}
    \ITh&= \{v_h \in \Vh \mid \inner{\AK  v_h, q_h}_K=0,\ \forall q_h\in \IP^{p-2}(K),\ K\in\Th \}.
\end{align*}
The embedded Trefftz DG method for the diffusion-advection-reaction problem then reads:
\begin{eqs}\label{eq:Tdarvar}
    \text{Find } u_h \in \Vh 
    &\text{ such that } &&\inner{\AK  u_h, q_h}_K=\inner{h_K f,q_h}_K \quad \forall q_h\in \IP^{p-2}(K), K\in\Th
 \\ &\text{ and } &&a_h(u_h,v_h)=\ell_h(v_h) \quad \forall v_h \in \ITh,
\end{eqs}
with $a_h(\cdot,\cdot)$ given in \eqref{eq:darah} and $\ell_h(\cdot)$ given in \eqref{eq:darlh}.

For all $v\in \Vsh$ we define the following mesh-dependent norms:
\begin{eqs}\label{eq:darnorms}
\normDG{v}^2:=&	
\inner{\alpha\nabla v, \nabla v}_\Th + 
\gamma_0\norm{v}^2_{L^{2}(\Omega)}+ 
\sum_{F\in\Fh}\sigma\frac{\alpha_F}{h_F}\int_F \jmp{v}^2 +
\frac12\sum_{F\in\calF_h}\int_F\abs{\beta\cdot n_F}\jmp{v}^2,\\
\TnormDG{v}^2:=&\normDG{v}^2+
\sum_{K\in \Th}h_K\norm{\alpha^{\frac12}\nabla v\cdot \nf}^2_{L^2(\partial K)}+
\sum_{K\in \Th}\norm{\beta}_{L^{\infty}(K)}\norm{v}^2_{L^2(\partial K)},
\end{eqs}
and as before from the definition of the norms \cref{ass:locality} is obvious. 

\subsubsection{Coercivity on the Trefftz space}
In contrast to the previous example, the DG bilinear form is coercive.
Different proofs of the well-posedness can be found in the literature, 
many rely on the ``boundedness on orthogonal subscales'', see \cite[Lemma~2.30]{DiPietroErn}, requiring that (piecewise) partial derivatives of elements of $\Vh$ belong to $\Vh$.
This can generally not be expected from the Trefftz-like spaces, we thus refer to \cite[Theorem~4.3]{2408.00392} for a proof that avoids the assumptions on $\Vh$. 
The price is paid in an unfavorable dependence of the continuity bound on the P\'eclet number, for more details we refer to \cite[Section~4.4]{2408.00392}.
We summarize the well-posedness in the following theorem.

\begin{theorem}\label{th:darwellposed}
The bilinear form $a_h(\cdot,\cdot)$ given in \eqref{eq:darah}
is coercive on $\Vh$
and continuous on $V_{*h}\times \Vh$.
Hence, DG variational problem \eqref{eq:darvar} admits a unique solution $u_h\in \Vh$, for any subspace 
$\Vh\subseteq \IP^p(\Th)$.
The weak solution $u$ of the BVP \eqref{eq:darpde}
solves the variational problem \eqref{eq:darvar}, i.e.\ \eqref{eq:darvar} is consistent.
\end{theorem}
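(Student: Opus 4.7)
The plan is to establish the three assertions (coercivity on $V_h$, continuity on $V_{*h}\times V_h$, and consistency) separately, following the standard template for interior-penalty / upwind DG analysis, but being careful that $V_h$ is permitted to be any subspace of $\IP^p(\Th)$. The latter restriction forbids the usual shortcut of applying a discrete trace inequality to $\nabla v_h|_K$, because $\nabla V_h$ need not lie in $V_h$. I therefore plan to structure the argument so that every estimate only uses inverse/trace inequalities on elements of the full polynomial space $\IP^p(K)$, and only the norms themselves are restricted to $V_h$.

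For coercivity, I would test $a_h(v_h,v_h)$ and split the contributions according to operator type. The reaction-advection terms are handled as in \cref{th:arwellposed}: integration by parts on $\inner{\beta\cdot\nabla v_h,v_h}_\Th$, together with the identity $\tfrac12\jmp{v_h^2}=\avg{v_h}\jmp{v_h}$ on interior facets, converts the skew-symmetric part into exactly the facet term $\tfrac12 |\beta\cdot\nf|\jmp{v_h}^2$ already present in $a_h$; assumption \eqref{assumptiondiv} then produces the $\gamma_0\|v_h\|_\Omega^2$ contribution. For the IP-diffusion block, a standard Young inequality absorbs the two symmetric consistency terms into $\inner{\alpha\nabla v_h,\nabla v_h}_\Th$ plus $\inner{\sigma \alpha_F/h_F\,\jmp{v_h},\jmp{v_h}}_{\calF_h}$, provided $\sigma$ is chosen larger than a fixed multiple of a discrete trace constant on $\IP^p(K)$; note this constant depends only on the shape regularity of $\Th$ and on $p$, not on $V_h$. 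Summing the contributions yields $a_h(v_h,v_h)\gtrsim \normDG{v_h}^2$.

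Continuity is by termwise Cauchy--Schwarz. The volumetric diffusion, advection and reaction pairings are immediate from the definition of $\normDG{\cdot}$. The delicate pairings are $\inner{\avg{\alpha\nabla w\cdot\nf},\jmp{v_h}}_{\Fhi}$ and $\inner{\jmp{w},\avg{\alpha\nabla v_h\cdot\nf}}_{\Fhi}$: for the first I bound $\avg{\alpha\nabla w\cdot\nf}$ by the $h_K\|\alpha^{1/2}\nabla w\cdot\bn\|_{\partial K}^2$ component of $\TnormDG{\cdot}^2$ and pair with the $\sigma\alpha_F/h_F$-weighted jump of $v_h$; for the second, since this factor sits on the test side, I use a discrete trace inequality on $\IP^p(K)$ to bound $\|\alpha^{1/2}\nabla v_h\cdot\nf\|_{\partial K}$ by $\|\alpha^{1/2}\nabla v_h\|_K$, which is legitimate because this inequality holds on the full polynomial space and we only use it to obtain an upper bound. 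The upwind facet pairings are treated analogously.

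Consistency is the cleanest step: for $u\in V=H^1(\Omega)\cap H^2(\Th)$ the jumps $\jmp{u}$ vanish on interior facets and equal $g_D$ on $\partial\Omega$, so all stabilization and symmetry terms either collapse or match the corresponding contributions in $\ell_h$. Elementwise integration by parts on the volumetric terms then yields $\inner{-\Div(\alpha\nabla u)+\beta\cdot\nabla u+\gamma u,v_h}_\Th=\inner{f,v_h}_\Th$. The main obstacle in the whole argument is the one the authors flag just before the statement: avoiding any step that tacitly assumes $\nabla V_h\subseteq V_h$ (the ``boundedness on orthogonal subscales'' trick). As sketched above, restricting trace inequalities on derivatives to the test side (where the full $\IP^p(K)$ is available as an ambient space) and keeping the normal-flux norm of the trial function inside the triple-bar norm $\TnormDG{\cdot}$ is precisely what bypasses this difficulty, at the price of a continuity constant whose dependence on the P\'eclet number is suboptimal, in line with the reference \cite{2408.00392}.
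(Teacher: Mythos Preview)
The paper does not prove this theorem in-text: the paragraph immediately preceding the statement explains that standard proofs (e.g.\ \cite[Lemma~2.30]{DiPietroErn}) rely on $\nabla V_h\subseteq V_h$, which fails for Trefftz-like subspaces, and then simply refers the reader to \cite[Theorem~4.3]{2408.00392} for a proof that avoids this assumption, noting that the price is a suboptimal dependence of the continuity constant on the P\'eclet number.

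Your sketch is consistent with that referenced approach and correctly isolates the one genuine obstruction: you keep every discrete trace inequality that involves a gradient on the \emph{test} side, where $v_h\in V_h\subseteq\IP^p(\Th)$ so $\nabla v_h\in\IP^{p-1}(\Th)$ and the inverse/trace estimates are available regardless of whether $\nabla V_h\subseteq V_h$; on the trial side you absorb the normal flux into the extra term $\sum_K h_K\|\alpha^{1/2}\nabla w\cdot\bn\|_{\partial K}^2$ of $\TnormDG{\cdot}$. This is exactly the mechanism the paper alludes to, and your remark about the resulting P\'eclet-number dependence matches the paper's caveat. In short, your proposal and the paper's (outsourced) argument coincide in substance; there is nothing to correct.
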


\subsubsection{The prototype operator for \texorpdfstring{\cref{lem:abstractneumann}}{}}\label{sec:darproto}
Similar to the advection-reaction case, we choose the prototype operator according to the highest order operator in $A_K$, setting
\begin{equation}\label{eq:darak0}
    A_{K,0} v_h = -h_K\Delta v_h.
\end{equation}
The kernel of the prototype operator $A_{K,0}$ are the harmonic polynomials.
From the theory on harmonic polynomials, cf. \cite[Prop. 5.5]{harmonicft}, we know that a suitable complementary space to the harmonic polynomials is given by 
\begin{equation}\label{eq:UKhdar}
    \UKh = |x|^2\IP^{p-2}(K).
\end{equation}

\begin{lemma}\label{lem:nonharmonicNE}
    For all $u\in \UKh$ we have
\begin{equation}\label{eq:nonharmominNE}
     \norm{\Lap u}^2_{K} \simeq  h_K^{-2}\norm{\nabla u}^2_{K} + h_K^{-4}\norm{u}^2_{K},   
\end{equation}
    where the constants depend only on the shape regularity of the mesh and the polynomial degree.
    And therefore
    \begin{equation*}
        \norm{A_{K,0}u}_{\QThp}^2 \gtrsim \sum_{K\in\Th} \norm{\nabla u}^2_{K} + h_K^{-2}\norm{u}^2_{K}     
        \gtrsim \normDG{u}^2\qquad \forall u\in \Uh.
    \end{equation*}
\end{lemma}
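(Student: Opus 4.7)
The plan is to first establish the local equivalence \eqref{eq:nonharmominNE} on a single element via a scaling argument, and then deduce the global statements on $\|A_{K,0}u\|_{\QThp}$ and on $\normDG{u}$ by combining the elementwise bound with standard DG inverse/trace estimates.

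For \eqref{eq:nonharmominNE}, I would transform to a reference element $\hat K$ of unit size via an affine map $\Psi_K$ that sends $x_K$ (the point used to define $|x|^2 = |x - x_K|^2$ in $\UKh$) to the origin. Pulling back, $\UKh$ becomes $|\hat x|^2 \IP^{p-2}(\hat K)$, and the three $L^2$-norms rescale as $\|\Delta u\|_K^2 \simeq h_K^{d-4}\|\hat\Delta \hat u\|_{\hat K}^2$, $\|\nabla u\|_K^2 \simeq h_K^{d-2}\|\hat\nabla \hat u\|_{\hat K}^2$, and $\|u\|_K^2 \simeq h_K^d \|\hat u\|_{\hat K}^2$, with constants depending only on shape regularity. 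The claim reduces to the $h$-independent statement
\[
\|\hat\Delta \hat u\|_{\hat K}^2 \simeq \|\hat\nabla \hat u\|_{\hat K}^2 + \|\hat u\|_{\hat K}^2 \qquad \forall \hat u \in |\hat x|^2\IP^{p-2}(\hat K).
\]
The $\lesssim$ direction is a routine polynomial inverse inequality on $\hat K$. For the $\gtrsim$ direction, which I expect to be the delicate step, I would invoke the classical polynomial decomposition $\IP^p(\hat K) = \calH^p(\hat K) \oplus |\hat x|^2 \IP^{p-2}(\hat K)$, where $\calH^p$ denotes harmonic polynomials up to degree $p$. This decomposition implies that $\hat\Delta$ restricted to $|\hat x|^2 \IP^{p-2}(\hat K)$ is injective (indeed bijective onto $\IP^{p-2}(\hat K)$), so $\hat u \mapsto \|\hat\Delta \hat u\|_{\hat K}$ defines a norm on this finite-dimensional space. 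Equivalence of norms on finite-dimensional spaces then yields $\|\hat u\|_{H^1(\hat K)} \lesssim \|\hat\Delta \hat u\|_{\hat K}$.

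Given \eqref{eq:nonharmominNE}, the first inequality follows since for $u \in \UKh$ we have $-h_K \Delta u \in \IP^{p-2}(K) = \QKh$; identifying $\QKh'$ with $\QKh$ through the $L^2(K)$ inner product gives $\|A_{K,0}u\|_{\QKh'} = h_K \|\Delta u\|_K$, so squaring \eqref{eq:nonharmominNE} and summing over $K$ provides $\|A_{K,0}u\|_{\QThp}^2 \gtrsim \sum_K (\|\nabla u\|_K^2 + h_K^{-2}\|u\|_K^2)$.

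For the final bound $\sum_K (\|\nabla u\|_K^2 + h_K^{-2}\|u\|_K^2) \gtrsim \normDG{u}^2$, I would handle each term in \eqref{eq:darnorms} separately. The bulk diffusion term is immediate via $\alpha \in L^\infty$, and the reaction term uses boundedness of $\gamma_0$ together with $h_K \lesssim 1$. The two facet jump terms are controlled via the standard polynomial trace inverse inequality $\|u\|_{\partial K}^2 \lesssim h_K^{-1}\|u\|_K^2 + h_K \|\nabla u\|_K^2$ applied on each element, where the jump on an interior facet $F \subset \partial K_1 \cap \partial K_2$ is bounded by the sum of the traces from either side, and shape regularity ensures bounded overlap between facets and elements. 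The main obstacle in the entire argument is the reference-element norm equivalence for the Laplacian on non-harmonic polynomials; once that is established, the remaining steps are standard scaling and trace-inverse estimates.
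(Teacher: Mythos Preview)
Your overall plan (scale to a unit-size configuration, invoke the classical decomposition $\IP^p = \calH^p \oplus |x|^2\IP^{p-2}$ to see that $\hat\Delta$ is injective on the non-harmonic part, and then use finite-dimensional norm equivalence) matches the paper's idea. However, your reference-element step hides the one point that actually needs work in the paper's polytopal setting (assumptions \ref{ass:mesh1}--\ref{ass:mesh2}): there is no single reference element $\hat K$, so the constant produced by ``equivalence of norms on finite-dimensional spaces'' on $\hat K$ a priori depends on the \emph{shape} of $K$, not just on $h_K$. Simply asserting that the constants depend only on shape regularity is exactly the thing to be shown.

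The paper's proof resolves this by avoiding a reference element altogether: it sandwiches $K$ between two concentric balls $b_K \subset K \subset B_K$ with $c := \diam(b_K)/\diam(B_K)\simeq 1$, pulls back via a dilation $\Phi_K:B_1\to B_K$ so that $b_K$ corresponds to $B_c$, and then uses the single fixed norm equivalence on the pair of balls $B_c\subset B_1$. The upper bound is obtained via $\|\Delta u\|_K \le \|\Delta u\|_{B_K}$, transform to $B_1$, pass to the equivalent $H^1$-norm on $B_c$, transform back to $b_K\subset K$; the lower bound reverses the roles of $B_1$ and $B_c$. This way the only shape-dependence enters through the ratio $c$, which is controlled by \ref{ass:mesh1}. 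Your argument becomes complete if you replace ``reference element $\hat K$'' by this ball-sandwich, or if you restrict to simplicial meshes where a genuine affine reference element exists. The remaining steps of your proposal (identifying $\|A_{K,0}u\|_{\QKh'}=h_K\|\Delta u\|_K$ and bounding the facet terms in $\normDG{\cdot}$ by the discrete trace inequality) are fine and in fact more explicit than the paper's write-up.
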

\begin{proof}
    Let $B_1$ be the unit ball and let $B_c$ be a ball of radius $c\simeq 1$. We have for any $u \in \IP^{k}(B_1\cup B_c)$  
    \begin{equation*}
         \norm{\Lap u}^2_{B_1} \simeq  \norm{\nabla u}^2_{B_1} + \norm{u}^2_{B_1} \simeq \norm{\nabla u}^2_{B_c} + \norm{u}^2_{B_c} \simeq \norm{\Lap u}^2_{B_c}.     
    \end{equation*}
    Let $b_k$ and $B_K$ be balls such that $b_K\subset K \subset B_K$ and $c = \diam(b_K) / \diam(B_K) \simeq 1$. Without loss of generality we may assume that $b_K$ and $B_K$ are centered at the same point. Using a linear mapping $\Phi_K: B_1 \to B_K$ we get by standard scaling arguments
    \begin{equation*}
        \norm{\Lap u}_{K} \lesssim \frac{1}{h_K^2} \norm{\Lap (u\circ\Phi_K) }_{B_1} 
                               \simeq  \frac{1}{h_K^2}(\norm{\nabla (u\circ\Phi_K) }_{B_c} + \norm{u\circ\Phi_K }_{B_c}) \lesssim \frac{1}{h_K}\!\norm{\nabla u}_{K} + \frac{1}{h_K^2}\!\norm{u}_{K}.
                           \end{equation*}
    Analogously we get
    \begin{equation*}
        \norm{\Lap u}_{K} \gtrsim \frac{1}{h_K^2} \norm{\Lap (u\circ\Phi_K) }_{B_c} 
                               \simeq  \frac{1}{h_K^2}(\norm{\nabla (u\circ\Phi_K) }_{B_1} + \norm{u\circ\Phi_K }_{B_1}) 
                               \gtrsim \frac{1}{h_K}\!\norm{\nabla u}_{K} + \frac{1}{h_K^2}\!\norm{u}_{K}.
   \end{equation*}

    The final estimate follows from the definition of the norm \eqref{eq:darnorms} and triangle- and trace inequality.
\end{proof}

Using \cref{lem:nonharmonicNE} we get the following corollary, allowing us to apply \cref{lem:abstractneumann}.
\begin{corollary}\label{cor:darneumann}
    The operators $\AK $ and $A_{K,0}$ for the advection-reaction-diffusion problem, defined in \eqref{eq:darak} and \eqref{eq:darak0}
    satisfy the assumptions of \cref{lem:abstractneumann}.
\end{corollary}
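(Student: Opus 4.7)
My plan is to invoke \cref{lem:abstractneumann} with the scaling $\omega = (\Pi^0 \alpha)^{-1}$ already used in \cref{lem::lapa_m_lapav}. First I would verify the standing hypothesis of \cref{lem:abstractneumann}, namely that $A_{K,0}: \UKh \to \QKh$ is invertible. This is a direct consequence of the Fischer/Almansi decomposition of polynomials: every $v \in \IP^p(K)$ splits uniquely as a harmonic polynomial of degree at most $p$ plus an element of $|x|^2\IP^{p-2}(K) = \UKh$. Since the harmonic polynomials in $\IP^p(K)$ are precisely the kernel of $\Delta|_{\IP^p(K)}$, the operator $-h_K\Delta$ restricts to a bijection from $\UKh$ onto $\IP^{p-2}(K) = \QKh$, so $A_{K,0}$ is bijective. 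This also underpins the fact that $V_h(K) = \UKh \oplus \ITh(K)$ is a valid splitting once invertibility of $\AK|_{\UKh}$ is established.

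Second, I would estimate the ratio in \eqref{eq:nearlycontant} for arbitrary $u \in \UKh$. Since $\QKh \subset L^2(K)$ carries the $L^2$-inner product, we have $\|\cdot\|_{\QKh'}\le\|\cdot\|_{L^2(K)}$, so \cref{lem::lapa_m_lapav} yields
\[
\|\omega\AK u - A_{K,0}u\|_{\QKh'} \lesssim h_K^2 \|\nabla\alpha\|_{L^\infty(K)} \|\Delta u\|_K + h_K C \|u\|_{H^1(K)},
\]
with $C = \|\nabla\alpha\|_{L^\infty(K)} + \|\beta\|_{L^\infty(K)} + \|\gamma\|_{L^\infty(K)}$. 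Using \cref{lem:nonharmonicNE} the $H^1$-norm can be absorbed: from $\|\nabla u\|_K \lesssim h_K\|\Delta u\|_K$ and $\|u\|_K \lesssim h_K^2\|\Delta u\|_K$ we get $\|u\|_{H^1(K)} \lesssim h_K \|\Delta u\|_K$, and the whole right-hand side collapses to $\lesssim h_K^2 C'\|\Delta u\|_K$ with $C'$ depending only on the coefficient norms. On the other hand $A_{K,0}u = -h_K\Delta u$ itself belongs to $\QKh$, so its dual norm is realised by pairing against itself, giving $\|A_{K,0}u\|_{\QKh'} = h_K\|\Delta u\|_K$. Dividing produces the key estimate
\[
\frac{\|\omega\AK u - A_{K,0}u\|_{\QKh'}}{\|A_{K,0}u\|_{\QKh'}} \lesssim h_K C'.
\]

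Taking the supremum over $u \in \UKh\setminus\{0\}$ and observing that all hidden constants depend only on the shape regularity \cref{ass:mesh1,ass:mesh2} together with the global $W^{1,\infty}/L^\infty$-bounds on the PDE coefficients, a standard smallness condition $h \le h_0$ — with $h_0$ determined solely by these coefficient norms and the mesh regularity — renders the supremum smaller than any fixed $\gamma<1$ uniformly in $K\in\Th$. Thus \eqref{eq:nearlycontant} is verified and \cref{lem:abstractneumann} applies, providing invertibility of $\AK|_{\UKh}: \UKh \to \QKh'$ with a uniform stability constant $c_{\text{\tiny\eqref{eq:quasiortho:c}}}$. The main subtle point is precisely this uniformity of the constants across the mesh family; it is guaranteed because the two preparatory lemmas are themselves proved with constants independent of $K$, so no element-wise degeneration can occur.
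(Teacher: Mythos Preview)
Your proposal is correct and follows essentially the same route as the paper: combine \cref{lem::lapa_m_lapav} with \cref{lem:nonharmonicNE} to absorb the $H^1$-term into $h_K^2\|\Delta u\|_K$, identify $\|A_{K,0}u\|_{\QKh'}=h_K\|\Delta u\|_K$ since $\Delta u\in\QKh$, and conclude that the ratio in \eqref{eq:nearlycontant} is $\lesssim h_K$ times a coefficient-dependent constant. Your explicit verification of the invertibility of $A_{K,0}$ via the Fischer/Almansi decomposition and your remarks on uniformity across the mesh family are useful additions that the paper leaves implicit.
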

\begin{proof}

    Let $\omega = \frac{1}{\Pi^0 \alpha}$.
    For $\alpha$ element-wise smooth, using the product rule and standard inequalities, 
    we have for all $u \in H^2(K)$
    \[ 
    \begin{aligned}
        &\frac{1}{\omega h_K} \|\omega \AK   u - A_{K,0}u \|_{K}   \le
        \norm{\Div(( \alpha\!-\! \Pi^0\alpha ) \nabla u)}_{K} + \norm{\beta}_{L^\infty(K)} \norm{u}_{H^1(K)} + \norm{\gamma}_{L^\infty(K)} \norm{u}_{K} \\
        &\quad\le \norm{\alpha\!-\! \Pi^0\alpha }_{L^\infty(K)}\norm{\Delta u}_{K} 
+ C\norm{u}_{H^1(K)}
        \lesssim h_K\norm{\nabla \alpha }_{L^\infty(K)} \norm{\Delta u}_{K} + C\norm{u}_{H^1(K)},
    \end{aligned}
    \]
    with $C=\| \nabla \alpha \|_{L^\infty(K)}\!+\!\norm{\beta}_{L^\infty(K)}\! +\! \norm{\gamma}_{L^\infty(K)}$.
    Now using \cref{lem:nonharmonicNE} we obtain that for any $u\in \UKh$ 
    \begin{align*}
        \norm{\AK  u - A_{K,0} u}_{K} &\lesssim C (h_K^2 \norm{\Delta u}_{K} + h_K \norm{u}_{H^1(K)})
                                       \\ &\lesssim C h_K^2 \norm{\Delta u}_{K} = C h_K \norm{A_{K,0} u}_{\QKh'},
    \end{align*}
    where we have used for the last equality that $\Lap u\in \QKh$.
    From this we obtain
    \begin{equation*}
    \frac{\norm{\AK  u - A_{K,0} u}_{\QKh'}}{\norm{A_{K,0}u}_{\QKh'}} \lesssim C h_K,
    \end{equation*}
    which for $h_K$ small enough implies \cref{eq:nearlycontant} as $C$ does not depend on $h_K$.
\end{proof}

\subsubsection{Putting it all together}\label{sec:darput}

\begin{corollary}\label{cor:darput}
    Let $u\in H^{s+1}(\Omega)$ be the solution to the weak form of problem \eqref{eq:darpde} and $u_h$ the solution of the Trefftz DG problem \eqref{eq:Tdarvar}.
Set $m = \min\{s,p\}$.
Under the assumptions of \cref{th:darwellposed} we have the following error estimate
\begin{equation}
    \Vert u - u_h \Vert_{\Vh} \lesssim \inf_{v_h \in \Vh} \Vert u - v_h \Vert_{\Vsh} \lesssim h^{m} |u|_{H^{m+1}(\Th)}. 
\end{equation}
\end{corollary}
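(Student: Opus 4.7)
The plan is to reduce the estimate to the abstract Strang-type result in \cref{cor:cea} and then apply standard DG approximation theory to the right-hand side. To do so, I would verify one by one the hypotheses of \cref{cor:cea} using the preparations made in \cref{sec:dar}.

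First I would handle the global block. \cref{th:darwellposed} provides coercivity of $a_h(\cdot,\cdot)$ on $V_h$ with respect to $\normDG{\cdot}$ and continuity on $V_{*h}\times V_h$, which in particular gives both \cref{ass:assumption_Th} (with $\tc_h=\Id$) and \cref{ass:cont_Th} when restricted to $\ITh\subset V_h$. Next, because $\ITh(K)=\ker(\Pi^{p-2}\AK)$ by construction, the upper right block of \eqref{eq:block} vanishes, so \cref{ass:inexacttrefftz} holds with $\rho=0$. Then I would invoke the DG setting: since $\AK$ from \eqref{eq:darak} acts only on $v_h|_K$, \cref{ass:locality} is immediate, and together with \eqref{eq:quasiortho:c} this yields \cref{ass:local} via \cref{cor:locality}. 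The condition \eqref{eq:quasiortho:c} has in fact already been established in \cref{cor:darneumann} through the prototype operator $A_{K,0}=-h_K\Delta$, using \cref{lem::lapa_m_lapav} and \cref{lem:nonharmonicNE}, provided $h$ is small enough that the Neumann criterion \eqref{eq:nearlycontant} holds.

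Third, I would check \cref{ass:local_strong}. For $u\in\Vsh$,
\begin{align*}
\AThnorm{u}{2}
=\sum_{K\in\Th}\sup_{q\in\IP^{p-2}(K)}\frac{\inner{h_K(-\Div(\alpha\nabla u)+\beta\cdot\nabla u+\gamma u),q}_K^2}{\norm{q}_K^2}
\lesssim \sum_{K\in\Th} h_K^2 \norm{\Div(\alpha\nabla u)}_K^2 + \TnormDG{u}^2,
\end{align*}
and a standard elementwise inverse estimate on the $-\Div(\alpha\nabla\cdot)$ term (splitting $\alpha$ into $\Pi^0\alpha$ and the residual and absorbing the $\nabla\alpha$-contribution into the $H^1(\Th)$-part of $\TnormDG{\cdot}$) bounds this by $\TnormDG{u}^2$, as desired. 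For functions in $V_h$ one uses the local inverse estimate $h_K\norm{\Delta v_h}_K\lesssim \norm{\nabla v_h}_K$; the contribution from the exact solution $u\in H^{s+1}$ is controlled directly by the $\Vsh$-norm which contains $\alpha^{1/2}\nabla u\cdot\bn$ on element boundaries.

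Finally I would verify consistency. Since $u\in V\cap H^2(\Th)$ solves the strong form, one has $A_\Th u=\ell_\Th$ pointwise and $a_h(u,v_h)=\ell_h(v_h)$ for all $v_h\in V_h$ by the standard DG consistency argument, so the last two residual terms in \eqref{eq:global-approx} vanish. Applying \cref{cor:cea} then delivers the first inequality $\norm{u-u_h}_{V_h}\lesssim \inf_{v_h\in V_h}\TnormDG{u-v_h}$. The second inequality follows from standard polynomial approximation on shape-regular meshes (the mesh assumptions \ref{ass:mesh1}, \ref{ass:mesh2} are enough to provide a Bramble--Hilbert-type estimate for the $L^2$, $H^1(\Th)$, jump and element-boundary contributions of $\TnormDG{\cdot}$), which for $u\in H^{s+1}(\Omega)$ and $m=\min\{s,p\}$ yields $\TnormDG{u-\Pi_h u}\lesssim h^m|u|_{H^{m+1}(\Th)}$. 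The main obstacle is not conceptual but bookkeeping: ensuring that the continuity \cref{ass:local_strong} for $\AK$ as a second-order operator is compatible with the $\Vsh$-norm chosen in \eqref{eq:darnorms}; everything else reuses building blocks already in place.
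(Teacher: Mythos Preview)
Your overall strategy matches the paper's proof: verify \cref{ass:assumption_Th,ass:cont_Th} via \cref{th:darwellposed}, get $\rho=0$ from the kernel construction, obtain \cref{ass:local} through \cref{cor:locality} and the prototype argument of \cref{sec:darproto}, check \cref{ass:local_strong}, note consistency, and apply \cref{cor:cea}. That skeleton is exactly right.

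The one genuine gap is your verification of \cref{ass:local_strong}. You bound
\[
\AThnorm{u}{2}\lesssim \sum_{K\in\Th} h_K^2\norm{\Div(\alpha\nabla u)}_K^2+\TnormDG{u}^2
\]
and then propose an inverse estimate $h_K\norm{\Delta v_h}_K\lesssim\norm{\nabla v_h}_K$ for the polynomial part, while asserting that ``the contribution from the exact solution $u\in H^{s+1}$ is controlled directly by the $\Vsh$-norm which contains $\alpha^{1/2}\nabla u\cdot\bn$ on element boundaries''. This last step is not correct as written: the $\Vsh$-norm in \eqref{eq:darnorms} contains no volume second-derivative term, so $h_K^2\norm{\Div(\alpha\nabla u)}_K^2$ is simply not dominated by $\TnormDG{u}^2$ for a generic $u\in V\subset\Vsh$. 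The boundary normal-derivative term does not control the interior divergence.

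The paper avoids this by integrating by parts \emph{before} taking the supremum: for $u\in\Vsh$ (which has elementwise $H^2$ regularity) and $q\in\IP^{p-2}(K)$,
\[
\inner{-\Div(\alpha\nabla u),q}_K=(\alpha\nabla u,\nabla q)_K-(\alpha\nabla u\cdot\bn,q)_{\partial K},
\]
and then applies inverse inequalities to $q$ (which \emph{is} polynomial) rather than to $u$. This produces factors $h_K^2\norm{\nabla q}_K^2+h_K\norm{q}_{\partial K}^2+\norm{q}_K^2\lesssim\norm{q}_K^2$ against $\norm{\alpha^{1/2}\nabla u}_K^2+h_K\norm{\alpha^{1/2}\nabla u\cdot\bn}_{\partial K}^2+\gamma_0\norm{u}_K^2\le\TnormDG{u}^2$, which is exactly the content of $\TnormDG{\cdot}$. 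Once you make this correction, the rest of your argument goes through unchanged.
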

\begin{proof}
    Due to \cref{th:darwellposed} coercivity and continuity hold for the problem on the Trefftz space $\ITh\subset \Vh$, and as a result \cref{ass:assumption_Th,ass:cont_Th} hold.
    As discussed in \cref{sec:embtrefftz} we have that \cref{ass:inexacttrefftz} is satisfied with $\rho = 0$. 
    We have shown \cref{eq:quasiortho:c} in \cref{sec:darproto}.
    Now applying \cref{cor:locality}, shows that \cref{ass:local} is satisfied.

    We now prove \cref{ass:local_strong}: For any $u\in \Vsh$ we have that 
    \begin{align*}
        \norm{\ATh u }_{\QThp}^2 &= 
        \sum_{K\in \Th} \sup_{q\in \IP^{p-2}(K)} \frac{\inner{h_K(-\Div(\alpha \nabla u) + \beta \cdot \nabla u + \gamma u) ,q}_K^2}{\norm{q}_{L^2(K)}^2}  \\
     &\lesssim \sum_{K\in\Th} \sup_{q\in \IP^{p-2}(K)}  
\frac{ (h_K^2\norm{\nabla q}_K^2 + h_K\norm{q}_{\partial K}^2 +\norm{q}_{L^2(K)}^2) }{\norm{q}_{L^2(K)}}\\
     &\qquad \qquad (\norm{\alpha^{1/2}\nabla u}_K^2 + h_K\norm{\nabla u\cdot n}_{\partial K}^2 + \gamma_0\norm{u}_{L^2(K)}^2  )  
                                   \\
       &\leq \TnormDG{u}^2, 
    \end{align*}
    using Cauchy–Schwarz inequality, triangle inequality, and $h_K\lesssim 1$.

    Further we have global and local consistency.
    Hence, we can apply \cref{cor:cea} to obtain the error estimate.
\end{proof}

\begin{corollary}

Let the assumptions of \cref{cor:darput} hold. Additionally, assume that 
the boundary of $\Omega$ is sufficiently smooth or that $\Omega$ is convex, such that $L^2-H^2$-regularity holds. There holds the error estimate
\begin{align*}
    \Vert u - u_h \Vert_{\Omega} \lesssim h^{m+1} |u|_{H^{m+1}(\Th)}. 
\end{align*}
\end{corollary}
\begin{proof}
We make use of \cref{th:aubinnitsche} with the choice $H = L^2$, $W = V$ and $\Wh = \Vh$ with $\norm{\cdot}_{\Wsh}= \norm{\cdot}_{\Vsh}$. With this choice we have that the continuity condition \eqref{eq:a-bit-cont} holds.
Since we considered a symmetric interior penalty formulation for the diffusion operator, adjoint consistency \eqref{eq:a-bit-consistent} is fulfilled.
Further, as can be shown by standard inverse estimates, the norms $\Vert \cdot \Vert_{\Vh}$ and $\Vert \cdot \Vert_{\Vsh}$ are equivalent on $\Vh$, and $\Vert \cdot \Vert_{\Vsh}$ is stronger than $\Vert \cdot \Vert_\V$ on $V$.
The bound \eqref{eq:H-reg-bound2} holds true due to standard approximation results and the assumed $L^2-H^2$-regularity. Finally, for any $z\in V \cap H^2(\Omega)$ we have $\AThnorm{z}{} \lesssim h \norm{z}_{H^2(\Omega)} \lesssim h \norm{a(\cdot,z)}_{L^2(\Omega)}$. This implies that \eqref{eq:H-reg-bound} holds, cf. \cref{lemma:H-reg-bound}.
\end{proof}

\subsubsection{Numerical example}\label{sec:darex}
We consider the problem \eqref{eq:darpde} on the unit square $\Omega = (0,1)^2$ with a sequence of uniform triangular meshes.
The PDE coefficients and the solution are chosen as
\begin{align}\label{eq:darex}
\alpha=1\!+\!x_1\!+\!x_2,\
\beta=\begin{pmatrix} \sin x_1\\\sin x_2\end{pmatrix},\
\gamma=\frac{4}{1\!+\!x_1\!+\!x_2\!+\!x_3},\ 
u_{\mathrm{ex}} = \sin(\pi(x_1\!+\!x_2\!)).
\end{align}
The right-hand side $f$ is constructed in order to manufacture the solution $u_{\mathrm{ex}}$ in \eqref{eq:darex}.
Dirichlet boundary conditions are imposed on the entire boundary of the domain.
The penalization parameters is chosen as $\sigma =50p^2$.

In \cref{fig:darex} we show the convergence of the Trefftz DG method for the problem \eqref{eq:darpde} with exact solution \eqref{eq:darex}.
We compare the Trefftz DG method with the standard DG method for different polynomial degrees $p=3,4,5$.
We observe the expected convergence rates for the $L^2$-error and the $\Vh$-error.

\begin{figure}[ht!]\centering
\resizebox{.8\linewidth}{!}{
\begin{tikzpicture}
	\begin{groupplot}[
	group style={
	group name={my plots},
	group size=2 by 1,
	horizontal sep=2cm,
	},
	legend style={
	legend columns=8,
	at={(0.8,-0.2)},
	draw=none
	},
	ymajorgrids=true,
	grid style=dashed,
	cycle list name=paulcolors2,
	]      
	\nextgroupplot[ymode=log,xmode=log,x dir=reverse, ylabel={$L^2$-error},xlabel={$h$}]
    \foreach \k in {3,4,5}{
    \addplot+[discard if not={p}{\k},discard if not={method}{et}] table [x=h, y=l2error, col sep=comma] {ex/dar2d.csv};
    \addplot+[discard if not={p}{\k},discard if not={method}{dg}] table [x=h, y=l2error, col sep=comma] {ex/dar2d.csv};
    }
    \addlegendimage{solid}
    \addplot[domain=0.06:1.0] {exp(-4*ln(1/x)-3.5)};
    \addplot[domain=0.06:1.0] {exp(-5*ln(1/x)-4.3)};
    \addplot[domain=0.06:1.0] {exp(-6*ln(1/x)-6.5)};

	\nextgroupplot[ymode=log,xmode=log,x dir=reverse, ylabel={$\Vh$-error},xlabel={$h$}]
	\foreach \k in {3,4,5}{
	\addplot+[discard if not={p}{\k},discard if not={method}{et}] table [x=h, y=dgerror, col sep=comma] {ex/dar2d.csv};
    \addplot+[discard if not={p}{\k},discard if not={method}{dg}] table [x=h, y=dgerror, col sep=comma] {ex/dar2d.csv};
	}
	\addlegendimage{solid}
	\addplot[dashed,domain=0.06:1.0] {exp(-3*ln(1/x)-0.2)};
	\addplot[dashed,domain=0.06:1.0] {exp(-4*ln(1/x)-1.5)};
	\addplot[dashed,domain=0.06:1.0] {exp(-5*ln(1/x)-2.9)};
	\legend{$\IT^3$,$\IP^3$,$\IT^4$,$\IP^4$,$\IT^5$,$\IP^5$,$\mathcal O(h^{p+1})$,{$\mathcal O(h^p)$ for $p=3,4,5$},}
	\end{groupplot}
\end{tikzpicture}}
\vspace{-0.5em}
\caption{
    Convergence of the Trefftz DG method for the problem \eqref{eq:darpde} with exact solution \eqref{eq:darex}.
    The left plot shows the $L^2$-error and the right plot the $\Vh$-error.
    We compare the Trefftz DG method with the standard DG method, plotted with dashed lines.
    The black lines indicate the expected convergence rates.
}
\label{fig:darex}
\end{figure}

\subsection{Quasi-Trefftz DG for the diffusion equation}\label{sec:qtdg}
In this section we consider the diffusion problem with the quasi-Trefftz DG method, briefly discussed in \cref{sec:qtrefftz}.
The quasi-Trefftz DG method for general elliptic problems was introduced and analyzed in \cite{2408.00392}.
We show that the quasi-Trefftz DG method can also be analyzed using the presented framework.
Furthermore, using the framework we extend the analysis by presenting the first optimal error bounds in the $L^2$-norm.

We consider the following boundary value problem 
\begin{eqs}\label{eq:dpde}
-\mathrm{div}(\alpha \nabla u) &= f  && \quad \text{in  }\Omega,\\
u&=g_{\mathrm D} && \quad\text{on } \partial\Omega,
\end{eqs}
with $\alpha\in W^{1,\infty}(\Th)$, $f\in L^2(\Omega)$ and $g_{\mathrm D}\in H^{1/2}(\partial\Omega)$.
The standard symmetric interior penalty DG discretization of problem \eqref{eq:dpde} is easily obtained by setting $\beta = \gamma = 0$ in the bilinear form $a_h(\cdot,\cdot)$ defined in \eqref{eq:darah} and the linear form $l_h(\cdot)$ defined in \eqref{eq:darlh}.
The analysis is carried out using the $\Vh$-norm defined in \eqref{eq:darnorms} with $\beta = \gamma_0 = 0$.
We note that the symmetric interior penalty DG discretization is well-posed.

The quasi-Trefftz space $\ITh$ is defined following \eqref{eq:qtspace}, giving
\begin{equation}
    \ITh=\big\{ v\in \Vh \mid D^{\bi} \Div(\alpha\nabla v) (x_K) = 0  \quad \forall \bi\in \IN^d_0,\ |\bi|\leq p-2,\ K\in\Th\big\}.
\end{equation}

Finally, the quasi-Trefftz DG method then reads as
\begin{eqs}\label{eq:qtdvar}
    \text{Find } u_h \in \Vh 
     &\text{ such that } && D^{\bi} \Div(\alpha \nabla u_h)(x_K)  = D^\bi f(x_K) \quad \forall \bi\in \IN^d_0, |\bi|\leq p\!-\!2,
K\in \Th,  \\ &\text{ and } &&a_h(u_h,v_h)=\ell_h(v_h) \quad \forall v_h \in \ITh.
\end{eqs}
We require the PDE coefficient $\alpha$ and right-hand side $f$ to be element-wise sufficiently smooth, such that the quasi-Trefftz method is well-defined.
To apply the framework from \cref{sec:framework} to quasi-Trefftz DG methods, we define 
\begin{equation}\label{eq:dakqt}
        A_K v = (h^{\frac{3}{2} + |\bi|}D^{\bi} \Div(\alpha \nabla v)(x_K) )_{\bi},
        \quad\text{ and } \quad
        \ell_K = (h^{\frac{3}{2} + |\bi|} D^{\bi}f_K(x_K))_\bi,
\end{equation}
with $\QKh = \IR^{|\{\bi:|\bi|\leq p-2\}|}$.
We recall that multi-indices are denoted by $\bi:=(i_{1},\ldots,i_{d}) \in \IN_0^{d}$, their length $|\bi|:=i_{1}+\cdots+i_{d}$ and here $(\cdot)_{\bi}$ denotes the vector over the multi-indices $\bi$ with $|\bi|\leq p-2$, with any (but fixed) ordering or entries.
We keep the $\normDG{\cdot}$-norm as defined in \eqref{eq:darnorms} and define the norm 
\begin{align*}
    \norm{v}_{\Vsh}^2 := \normDG{v}^2 + \sum_{K\in\Th} \sum_{|\bi|\leq p-2} h_K^{2|\bi|-1} \norm{D^\bi v}^2_{C^0(K)}, 
\end{align*}
where $\norm{v}_{C^0(K)}:=\sup_{x\in K}|v(x)|.$

\subsubsection{The prototype operator for \texorpdfstring{\cref{lem:abstractneumann}}{}}\label{sec:qtproto}
We follow along \cref{sec:darproto}, choosing in virtue a similar prototype operator, adapted to the $Q_h$ space of the quasi-Trefftz DG method, resulting in
\begin{equation}\label{eq:dak0qt}
    A_{K,0}: \UKh \to \IR^{|\{\bi:|\bi|\leq p-2\}|} \text{ with } A_{K,0} v_h = (-h^{\frac{3}{2} + |\bi|}D^{\bi} \Delta v_h(x_K) )_{\bi}.
\end{equation}
As in \cref{sec:darproto}, the kernel of the prototype operator $A_{K,0}$ are still the harmonic polynomials and a complementary space is constructed as in \eqref{eq:UKhdar}.

\begin{lemma} \label{lem::lapa_m_lapavqt} 
    For any $K\in\Th$ and $\alpha\in C^{p-1}(K)$ we have for all $u\in \IP^{p}(K)$ that
    \begin{align*}
        \| \omega \AK  u - A_{K,0} u \|_{\IR^{|\{\bi:|\bi|\leq p-2\}|}} 
       &\lesssim 
       h_K\norm{\alpha}_{C^{p-1}(K)}\left( h_K \norm{\Delta u}_K
       + \norm{\nabla u}_K \right), 
    \end{align*}
     where $\omega = \frac{1}{\Pi^0 \alpha}$.
\end{lemma}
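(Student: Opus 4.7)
The plan is to expand the difference $\omega A_K u - A_{K,0} u$ componentwise, apply the Leibniz rule to derivatives of $\Div(\alpha\nabla u)$, and then use polynomial inverse/norm-equivalence inequalities to convert point values of derivatives of $u$ at $\bx_K$ into $L^2$-norms over $K$.

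First I would write the $\bi$-th component of $\omega A_K u - A_{K,0} u$ as
\[
h_K^{3/2+|\bi|}\, D^{\bi}\!\bigl[(\omega\alpha - 1)\,\Delta u + \omega\,\nabla\alpha\cdot\nabla u\bigr](\bx_K),
\]
using the identity $\Div(\alpha\nabla u)=\alpha\Delta u+\nabla\alpha\cdot\nabla u$ and that $\omega$ is a constant on $K$. Applying Leibniz's rule then produces terms of the form $D^{\bi-\bj}(\omega\alpha-1)(\bx_K)\cdot D^{\bj}\Delta u(\bx_K)$ and $\omega\,D^{\bi-\bj}\nabla\alpha(\bx_K)\cdot D^{\bj}\nabla u(\bx_K)$. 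The key observation is that in the single term where $\bj=\bi$ the $\alpha$-factor is $(\omega\alpha-1)(\bx_K)=\omega\bigl(\alpha(\bx_K)-\Pi^0\alpha\bigr)$, which is of order $h_K\|\nabla\alpha\|_{L^\infty(K)}\lesssim h_K\|\alpha\|_{C^{p-1}(K)}$; in every other term $D^{\bi-\bj}(\omega\alpha-1)=\omega D^{\bi-\bj}\alpha$, bounded by $\|\alpha\|_{C^{p-1}(K)}$ since $|\bi-\bj|\leq|\bi|\leq p-2$ and similarly $|\bi-\bj|+1\leq p-1$ for the gradient contribution.

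Next I would bound point values of $u$-derivatives. Since $u\in\IP^p(K)$, both $\Delta u$ and $\nabla u$ are polynomials of degrees $p-2$ and $p-1$ respectively. By norm equivalence on the (reference) finite-dimensional space together with a standard inverse inequality one has, with a constant depending only on $p$ and the shape regularity in \ref{ass:mesh1}--\ref{ass:mesh2},
\[
|D^{\bj}\Delta u(\bx_K)|\lesssim h_K^{-|\bj|-d/2}\,\|\Delta u\|_{K},
\qquad
|D^{\bj}\nabla u(\bx_K)|\lesssim h_K^{-|\bj|-d/2}\,\|\nabla u\|_{K}.
\]
Multiplying each Leibniz term by its scaling factor $h_K^{3/2+|\bi|}$ and distinguishing the special case $\bj=\bi$ from the remaining indices gives, for every $\bi$ with $|\bi|\leq p-2$, a bound of the form $C\,h_K\|\alpha\|_{C^{p-1}(K)}\bigl(h_K\|\Delta u\|_K+\|\nabla u\|_K\bigr)$ times an appropriate weight coming from the norm on $Q_h(K)\cong\IR^N$. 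Summing over the finitely many multi-indices $|\bi|\leq p-2$ (a uniformly bounded number) then yields the claim.

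The main obstacle will be bookkeeping the powers of $h_K$ correctly, since three different sources of $h$-scaling interact: the prefactor $h_K^{3/2+|\bi|}$ in $A_K$ itself, the inverse-inequality factors $h_K^{-|\bj|-d/2}$ on $u$-derivatives, and the first-order Taylor gain of $h_K$ available only in the $\bj=\bi$ term of the $\Delta u$-contribution (whereas for lower-order Leibniz terms the saving instead comes from the lower order of differentiation on $u$). Once these balance in the way mirroring the corresponding computation in \cref{lem::lapa_m_lapav} of \cref{sec:darproto}, the asserted bound follows.
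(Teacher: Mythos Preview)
Your proposal is correct and follows essentially the same route as the paper's proof: expand $\Div((\alpha-\Pi^0\alpha)\nabla u)$ via the Leibniz rule, single out the undifferentiated $(\alpha-\Pi^0\alpha)$-term to extract the extra factor~$h_K$ by best approximation, and then convert point (resp.\ $C^0$) bounds on polynomial derivatives of $\Delta u$ and $\nabla u$ into $L^2(K)$-norms by standard scaling/inverse inequalities. The only cosmetic difference is that the paper carries the $C^0(K)$-norm throughout before scaling, whereas you evaluate at~$\bx_K$ directly; both lead to the same estimate.
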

\begin{proof}
    For $\alpha$ element-wise smooth, using the product rule and standard inequalities, 
    we have for all $u \in \IP^{p}(K)$ and all $|\bi|\leq p-2$ that by using the Leibniz product rule
    \begin{align*}
       &\norm{h_K^{\frac{3}{2}+|\bi|}D^\bi \Div(( \alpha\!-\! \Pi^0\alpha ) \nabla u)}_{C^0(K)} \\
       &\lesssim  \sum_{\bell\leq \bi}\Big(\norm{h_K^{\frac{3}{2}+|\bi|}D^{\bell} ( \alpha\!-\! \Pi^0\alpha ) D^{\bi-\bell}\Delta u))}_{C^0(K)} 
+\norm{h_K^{\frac{3}{2}+|\bi|}D^\bell \nabla  \alpha  D^{\bi-\bell}\nabla u))}_{C^0(K)} \Big) \\
       &\lesssim \sum_{\bell\leq \bi}\Big(\norm{h_K^{|\bell|} D^{\bell} ( \alpha\!-\! \Pi^0\alpha ) h_K^{\frac{3}{2}+|\bi|-|\bell|}D^{\bi-\bell}\Delta u))}_{C^0(K)}\!\!+\!
\norm{h_K^{1+|\bell|}D^\bell \nabla  \alpha  h_K^{\frac{1}{2}+|\bi|-|\bell|} D^{\bi-\bell}\nabla u))}_{C^0(K)} \Big) \\
       &\lesssim 
    \norm{( \alpha\!-\! \Pi^0\alpha ) h_K^{\frac{3}{2}+|\bi|}D^{\bi}\Delta u))}_{C^0(K)} 
       +\sum_{\mathbf{0} < \bell\leq \bi}\norm{h_K^{|\bell|} D^{\bell} \alpha  h_K^{\frac{3}{2}+|\bi|-|\bell|}D^{\bi-\bell}\Delta u))}_{C^0(K)} \\
       &\qquad+ \sum_{\bell\leq\bi} \norm{h_K^{1+|\bell|}D^\bell \nabla  \alpha  h_K^{\frac{1}{2}+|\bi|-|\bell|} D^{\bi-\bell}\nabla u))}_{C^0(K)}.
       \end{align*}
       Using best approximation properties and collecting the terms in $\alpha$ we can continue to estimate
       \begin{align*}
        &\norm{h_K^{\frac{3}{2}+|\bi|}D^\bi \Div(( \alpha\!-\! \Pi^0\alpha ) \nabla u)}_{C^0(K)} \\
       &\lesssim \ \norm{\alpha\!-\!\Pi^0\alpha}_{L^\infty(K)} h_K^{\frac{3}{2}+|\bi|} \norm{D^\bi \Lap u}_{C^0(K)} 
       + h_K\norm{\alpha}_{C^{p-2}(K)} \sum_{\mathbf{0}<\bell\leq\bi} h_K^{\frac{3}{2}+|\bi|-|\bell|} \norm{D^{\bi-\bell}\Delta u}_{C^0(K)} \\
       &\qquad 
       + h_K\norm{\alpha}_{C^{p-1}(K)} \sum_{\bell\leq\bi} h_K^{\frac{1}{2}+|\bi|-|\bell|} \norm{D^{\bi-\bell}\nabla u}_{C^0(K)} \\
       &\lesssim h_K\norm{\alpha}_{C^{p-1}(K)}\left( \sum_{\bell\leq\bi} h_K^{\frac{3}{2}+|\bi|-|\bell|} \norm{D^{\bi-\bell}\Delta u}_{C^0(K)} 
       + \sum_{\bell\leq\bi} h_K^{\frac{1}{2}+|\bi|-|\bell|} \norm{D^{\bi-\bell}\nabla u}_{C^0(K)} \right)
       .
    \end{align*}
    By standard scaling arguments we have 
    \[    \sum_{\bell\leq\bi} h_K^{\frac{3}{2}+|\bi|-|\bell|} \norm{D^{\bi-\bell}\Delta u}_{C^0(K)} 
       + \sum_{\bell\leq\bi} h_K^{\frac{1}{2}+|\bi|-|\bell|} \norm{D^{\bi-\bell}\nabla u}_{C^0(K)} \lesssim \norm{\Delta u}_K
       + \norm{\nabla u}_K,
    \] 
    for $u\in \IP^p(K)$, hence the statement follows. 
\end{proof}

\begin{lemma}\label{lem:nonharmonicNEqt}
    For all $u\in \UKh$ we have
    \begin{equation*}
        \norm{A_{K,0}u}_{\QThp}^2 \gtrsim \sum_{K\in\Th} \norm{\nabla u}^2_{K} + h_K^{-2}\norm{u}^2_{K}     
        \gtrsim \normDG{u}^2.
    \end{equation*}
\end{lemma}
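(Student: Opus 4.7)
The plan is to reduce this lemma to Lemma~\ref{lem:nonharmonicNE}, which is already at our disposal and supplies the desired lower bound in terms of $\norm{\Lap u}_K^2$ rather than $\norm{A_{K,0} u}_{\QKh'}$. The missing link is a polynomial norm equivalence that compares $\norm{A_{K,0}u}_{\QKh'}^2$---an $h_K$-weighted Euclidean norm of the Taylor coefficients of $\Lap u$ at $\bx_K$---to the $L^2$-norm $\norm{\Lap u}_K^2$, combined with a routine bound on the DG norm in terms of $\norm{\nabla u}_K^2$ and $h_K^{-2}\norm{u}_K^2$.

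For the key bridge I would first observe that, since $u\in\UKh=|x|^2\IP^{p-2}(K)$, the Laplacian $\Lap u$ lies in $\IP^{p-2}(K)$, so its Taylor expansion at $\bx_K$ terminates and the coefficients $(D^{\bi}\Lap u(\bx_K))_{|\bi|\leq p-2}$ parametrize $\IP^{p-2}(K)$ uniquely. A standard scaling argument---map to a reference element of unit diameter via $\hat x\mapsto \bx_K+h_K\hat x$, apply norm equivalence on the finite-dimensional polynomial space, and rescale using $D^{\bi}\hat p(\hat x_K)=h_K^{|\bi|}D^{\bi}p(\bx_K)$ together with $\norm{p}_K^2=h_K^d\norm{\hat p}_{\hat K}^2$---delivers
\[
\norm{\Lap u}_K^2 \;\simeq\; \sum_{|\bi|\leq p-2}h_K^{d+2|\bi|}\,|D^{\bi}\Lap u(\bx_K)|^2,
\]
with constants depending only on $p$ and shape-regularity. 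Comparing this with the definition \eqref{eq:dak0qt} of $A_{K,0}$ gives $\norm{A_{K,0}u}_{\QKh'}^2 \simeq h_K^{3-d}\norm{\Lap u}_K^2$, and invoking Lemma~\ref{lem:nonharmonicNE} then yields $\norm{A_{K,0}u}_{\QKh'}^2 \gtrsim h_K^{1-d}\norm{\nabla u}_K^2 + h_K^{-1-d}\norm{u}_K^2$. Summing over $K$ and using $h_K\lesssim 1$ (so that $h_K^{1-d}\gtrsim 1$ and $h_K^{-1-d}\gtrsim h_K^{-2}$ in both $d=2$ and $d=3$) produces the first inequality.

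For the second inequality, with $\beta=\gamma_0=0$ the norm in \eqref{eq:darnorms} reduces to $\normDG{u}^2 = \inner{\alpha\nabla u,\nabla u}_\Th + \sum_{F\in\Fh}\sigma\,\alpha_F h_F^{-1}\,\norm{\jmp{u}}_F^2$. The volume term is bounded elementwise by $\norm{\alpha}_{L^\infty}\norm{\nabla u}_K^2$, and the facet contributions are controlled using the polynomial trace inequality $\norm{u}_{\partial K}^2\lesssim h_K^{-1}\norm{u}_K^2$ together with $\alpha_F\lesssim 1$, giving $\sum_{F}\alpha_F h_F^{-1}\norm{\jmp{u}}_F^2\lesssim \sum_K h_K^{-2}\norm{u}_K^2$. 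The main subtlety is not conceptual but arithmetic: one has to keep track of the $d$-dependent exponent $h_K^{3-d}$ that surfaces when identifying the Euclidean norm of Taylor coefficients with the $L^2(K)$-norm, and verify it absorbs correctly into both the gradient and zero-order terms; the standing mesh assumption $h_K\lesssim 1$ is precisely what makes this bookkeeping go through for $d\in\{2,3\}$.
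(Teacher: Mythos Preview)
Your proposal is correct and follows essentially the same route as the paper: relate $\norm{A_{K,0}u}_{\QKh'}$ to $\norm{\Lap u}_K$ via a scaling/norm-equivalence argument on $\IP^{p-2}(K)$, then invoke \cref{lem:nonharmonicNE}. The paper compresses the first step into the single line $\norm{A_{K,0}u}_{\QThp}^2 \simeq \sum_{K}h_K^2\norm{\Lap u}_K^2$ and leaves the second inequality implicit, whereas you track the dimension-dependent factor $h_K^{3-d}$ explicitly and spell out the trace-inequality bound for the jump terms; your more careful bookkeeping is a genuine improvement in precision but not a different argument.
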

\begin{proof}
    By standard scaling arguments we have
    \begin{align*}
        \norm{A_{K,0}u}_{\QThp}^2 &  \simeq \sum_{K\in\Th}h_K^2\norm{\Lap u}_K^2.
    \end{align*}
    Using \cref{lem:nonharmonicNE} we obtain the result.
\end{proof}

\begin{corollary}\label{cor:darneumannqt}
    The operators $\AK $ and $A_{K,0}$ for the diffusion problem, defined in \eqref{eq:dakqt} and \eqref{eq:dak0qt}
    satisfy the assumptions of \cref{lem:abstractneumann}.
\end{corollary}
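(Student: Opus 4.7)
The plan is to mirror the proof of \cref{cor:darneumann} almost verbatim, substituting the two freshly established lemmas \cref{lem::lapa_m_lapavqt} and \cref{lem:nonharmonicNEqt} for their diffusion--advection--reaction counterparts. Setting $C := \|\alpha\|_{C^{p-1}(\Omega)}$ (which is independent of $h_K$), I would first invoke \cref{lem::lapa_m_lapavqt} on each element to obtain, for $u \in \UKh$,
\begin{equation*}
\|\omega \AK u - A_{K,0}u\|_{\QKh'} \lesssim h_K\, C \,\big(h_K \|\Delta u\|_K + \|\nabla u\|_K\big),
\end{equation*}
with $\omega = 1/\Pi^0\alpha$, noting that $\QKh' = \IR^{|\{\bi:|\bi|\leq p-2\}|}$ so that the norms match identically.

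The key reduction is to absorb the gradient term. Since $\UKh = |x|^2 \IP^{p-2}(K)$, the equivalence \eqref{eq:nonharmominNE} established in \cref{lem:nonharmonicNE} applies and yields $\|\nabla u\|_K \lesssim h_K \|\Lap u\|_K$ for all $u \in \UKh$. Plugging this in collapses the right-hand side to $h_K^2\, C\, \|\Delta u\|_K$. Then I would appeal to the scaling equivalence $\|A_{K,0}u\|_{\QKh'} \simeq h_K\|\Lap u\|_K$ recorded inside the proof of \cref{lem:nonharmonicNEqt}, to divide and obtain
\begin{equation*}
\sup_{u\in \UKh}\frac{\|\omega \AK u - A_{K,0}u\|_{\QKh'}}{\|A_{K,0}u\|_{\QKh'}} \lesssim C\, h_K.
\end{equation*}
Since $C$ is a fixed constant independent of the mesh size, this is bounded by a $\gamma < 1$ as soon as $h_K$ is sufficiently small, which is exactly the hypothesis \eqref{eq:nearlycontant} of \cref{lem:abstractneumann}.

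The argument is structurally identical to \cref{cor:darneumann}, and I do not anticipate a genuine obstacle. The one subtlety worth double-checking is that the point-evaluation norm $\|A_{K,0}u\|_{\QKh'}^2 = \sum_{|\bi|\leq p-2} h_K^{3+2|\bi|}|D^\bi \Lap u(\bx_K)|^2$, after applying inverse inequalities on polynomials, actually recovers $h_K^2\|\Lap u\|_K^2$ (and not merely bounds it from above or below); this is precisely the scaling computation carried out inside the proof of \cref{lem:nonharmonicNEqt}, so invoking that proof --- rather than only its statement --- is what makes the absorption step tight.
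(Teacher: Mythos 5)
Your proposal is correct and follows essentially the same route as the paper's proof: bound the difference via \cref{lem::lapa_m_lapavqt}, absorb the gradient term using the equivalence \eqref{eq:nonharmominNE} on $\UKh=|x|^2\IP^{p-2}(K)$, and conclude via the scaling identity $\norm{A_{K,0}u}_{\QKh'}\simeq h_K\norm{\Lap u}_K$ (valid since $\Lap u\in\IP^{p-2}(K)$), so that the ratio is $\lesssim C h_K<\gamma$ for $h_K$ small. Your remark about needing the two-sided scaling computation from the proof of \cref{lem:nonharmonicNEqt} (not just its stated lower bound) is exactly the point the paper also uses implicitly.
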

\begin{proof}
    Let $C=\| \alpha \|_{C^{p-1}(\Omega)}$.
    From \cref{lem::lapa_m_lapavqt} and \cref{lem:nonharmonicNE} we obtain that for any $u\in \UKh$ 
    \begin{align*}
        \norm{\AK  u - A_{K,0} u}_{K} &\lesssim C (h_K^2 \norm{\Delta u}_{K} + h_K \norm{u}_{H^1(K)})
                                   \\ &\lesssim C h_K^2 \norm{\Delta u}_{K}  \simeq 
                                       C h_K\norm{A_{K,0} u}_{\QKh'},
    \end{align*}
    where we have used for the last equality that $\Lap u\in\IP^{p-2}(K)$.
    For $h_K$ small enough this implies \cref{eq:nearlycontant} as $C$ does not depend on $h_K$.
\end{proof}

\subsubsection{Putting it all together}\label{sec:qtput}

\begin{corollary}\label{cor:qtput}
    Let $u\in C^{p+1}(\Th)\cap V$ be the solution to the weak form of problem \eqref{eq:dpde} and $u_h$ the solution of the Trefftz DG problem \eqref{eq:qtdvar}.
Under the assumptions of \cref{th:darwellposed} we have the following error estimate
\begin{equation}
    \Vert u - u_h \Vert_{\Vh} \lesssim \inf_{v_h \in \Vh} \Vert u - v_h \Vert_{V_{*h}} \lesssim h^{p} |u|_{C^{p+1}(\Th)} .
\end{equation}
\end{corollary}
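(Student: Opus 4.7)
The strategy is to apply the Strang-type result \cref{cor:cea}, which yields the first inequality, and then bound the resulting best-approximation error by an elementwise Taylor polynomial of $u$ to obtain the second inequality.

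I would verify the framework assumptions as follows. \cref{ass:assumption_Th,ass:cont_Th} follow immediately from \cref{th:darwellposed} by restricting the coercivity and continuity of $a_h(\cdot,\cdot)$ on $V_h$ to the subspace $\ITh\subset V_h$. Since the quasi-Trefftz space is constructed as the kernel of $\AK$ acting on $V_h(K)$, \cref{ass:inexacttrefftz} holds trivially with $\rho=0$, and the locality condition \cref{ass:locality} is immediate from the definition \eqref{eq:dakqt}. For \cref{ass:local} I would invoke \cref{cor:locality}: the missing bound \eqref{eq:quasiortho:c} is supplied by combining the prototype estimate \cref{cor:darneumannqt} with \cref{lem:abstractneumann}, which is valid for $h$ sufficiently small since the governing constant depends only on $\|\alpha\|_{C^{p-1}(\Th)}$. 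For \cref{ass:local_strong} one expands $D^{\bi}\Div(\alpha\nabla v)(\bx_K)$ by the Leibniz rule and bounds the resulting pointwise products of derivatives of $\alpha$ and $v$ against the $C^{0}$-seminorms in $\TnormDG{\cdot}$, with inverse estimates providing the needed higher-order derivative bounds on the polynomial part; the scaling $h_K^{3/2+|\bi|}$ in $\AK$ is designed precisely to align with the weights $h_K^{2|\bi|-1}$ appearing in $\norm{\cdot}_{\Vsh}$. Finally, both consistency contributions in \eqref{eq:global-approx} vanish: global consistency of the SIPDG form is standard for $u\in V\cap H^{2}(\Omega)$, and local consistency $\ATh u=\ell_\Th$ is the pointwise PDE $\Div(\alpha\nabla u)=-f$ differentiated $|\bi|$ times at $\bx_K$, which is well defined by the assumed regularity $u\in C^{p+1}(\Th)$.

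For the second inequality I would take $v_h$ to be the elementwise Taylor polynomial of $u$ about $\bx_K$ of degree $p$. The standard Taylor remainder estimate $\|D^{\bi}(u-v_h)\|_{C^{0}(K)}\lesssim h_K^{p+1-|\bi|}|u|_{C^{p+1}(K)}$ for $|\bi|\leq p$ shows that the bulk, jump, penalty, and normal-flux contributions to $\normDG{u-v_h}^2$ are each bounded by $h_K^{2p}|u|_{C^{p+1}(K)}^{2}$ via $L^\infty$–$L^2$ and trace inequalities, while the additional pointwise seminorms satisfy $h_K^{2|\bi|-1}\|D^{\bi}(u-v_h)\|_{C^{0}(K)}^{2}\lesssim h_K^{2p+1}|u|_{C^{p+1}(K)}^{2}$ for every $|\bi|\leq p-2$. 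Summing over $K\in\Th$ and taking square roots gives the claimed $h^{p}$ rate. The main obstacle I anticipate is the careful bookkeeping in \cref{ass:local_strong}, where the chosen $h$-scalings of $\AK$ and of the $\Vsh$-norm must jointly absorb the two derivatives lost to $\Div(\alpha\nabla\cdot\,)$ uniformly in $h$, and where the mixed polynomial/smooth nature of functions in $\Vsh$ forces one to combine inverse estimates with the pointwise $C^{0}$-seminorms; all remaining assumptions reduce cleanly to results already established earlier in \cref{sec:qtdg}.
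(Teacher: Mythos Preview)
Your proposal is correct and follows essentially the same route as the paper: verify \cref{ass:assumption_Th,ass:cont_Th} via \cref{th:darwellposed}, note $\rho=0$ for \cref{ass:inexacttrefftz}, obtain \cref{ass:local} from \cref{cor:locality} together with the prototype-operator results in \cref{sec:qtproto}, argue \cref{ass:local_strong} from the design of $\TnormDG{\cdot}$, observe global and local consistency, and then invoke \cref{cor:cea}. The paper is terser---it simply asserts that $\TnormDG{\cdot}$ is chosen so that the local operator is continuous and does not spell out the Taylor-polynomial argument for the second inequality---whereas you supply the Leibniz-rule and Taylor-remainder details; but the underlying argument is the same.
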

\begin{proof}
    Due to \cref{th:darwellposed} coercivity and continuity hold for the problem and as a result \cref{ass:assumption_Th,ass:cont_Th} hold.
    We have that \cref{ass:inexacttrefftz} is satisfied with $\rho = 0$. 
    The norm $\TnormDG{\cdot}$ is chosen such that our local operator is continuous. 
    We have shown \cref{eq:quasiortho:c} in \cref{sec:qtproto}.
    Now applying \cref{cor:locality}, shows that \cref{ass:local,ass:local_strong} are satisfied.
    Further we have global and local consistency.
    Hence, we can apply \cref{cor:cea} to obtain the error estimate.
\end{proof}

\begin{corollary}
Let the assumptions of \cref{cor:qtput} hold. Additionally, assume that 
the boundary of $\Omega$ is sufficiently smooth or that $\Omega$ is convex, such that $L^2-H^2$-regularity holds. There holds the error estimate
\begin{align*}
    \Vert u - u_h \Vert_{\Omega} \lesssim h^{p+1} |u|_{C^{p+1}(\Th)}. 
\end{align*}
\end{corollary}
\begin{proof}

We make use of \cref{th:aubinnitsche} with the choice $H = L^2$, $W = V$ and $\Wh = \Vh$ with $$\norm{\cdot}_{\Wsh}^2= \norm{\cdot}_{\Vh}^2
+\sum_{K\in \Th}h_K\norm{\alpha^{\frac12}\nabla v\cdot \nf}^2_{L^2(\partial K)}.
$$ With this choice we have that the continuity condition \eqref{eq:a-bit-cont} holds.
Since we considered a symmetric interior penalty formulation for the diffusion operator, adjoint consistency \eqref{eq:a-bit-consistent} is fulfilled.
Further, as can be shown by standard inverse estimates, the norms $\Vert \cdot \Vert_{\Vh}$ and $\Vert \cdot \Vert_{\Vsh}$ are equivalent on $\Vh$, and $\Vert \cdot \Vert_{\Vsh}$ is stronger than $\Vert \cdot \Vert_\V$ on $\V$.
The bound \eqref{eq:H-reg-bound2} holds true due to standard approximation results and the assumed $L^2-H^2$-regularity.

It remains to show \eqref{eq:H-reg-bound}. Let $P:H^2(\Th)\to \Vh$ be the $H^2$-orthogonal projection onto $\Vh$. Let $\widetilde{ \ATh} = \ATh P$. Let $z_h = z_\IL + z_\IT \in \Uh \oplus \ITh$ be the solution of 
\[
    \left(
    \begin{array}{c@{\quad\quad}c}
         \langle \widetilde\ATh  \cdot, q_h \rangle &  0  \\
        a_h(\cdot,v_h) & a_h(\cdot,v_h) 
    \end{array}
    \right) 
    \left(
     \begin{array}{c}
        z_\IL  \\
        z_{\IT}
     \end{array}
     \right)
    = \left(
    \begin{array}{c}
        0 \\
        a(z,v_h)
    \end{array}
    \right)
    , \quad \forall q_h\in \QKh, \forall v_h\in  \ITh.
\]

The operator $\widetilde\ATh$ is coercive as $P$ is the identity for polynomials, hence \cref{ass:local} holds for $\widetilde\ATh$.
The continuity of $\widetilde\ATh$ is measured with respect to the norm 
\begin{align*}
    \norm{v}_{\widetilde{V_{*h}}}^2 = \norm{v}_{\Wsh}^2 
    + \norm{h_K v}_{H^2(\Th)}^2.
\end{align*}
For any polynomial $w\in\IP^p(\Th)$ we have by standard scaling arguments that 
$$\norm{\ATh w}_{\IR^{|\{\bi:|\bi|\leq p-2\}|}} \leq \norm{h_K w}_{H^2(\Th)}.$$
For any $v\in V\cap H^2(\Th)$ we have that $w=Pv\in \IP^p(\Th)$ satisfies 
$\norm{w}_{H^2(\Th)} \leq \norm{v}_{H^2(\Th)}$, hence
\begin{equation}\label{eq:qth2bound}
    \norm{\widetilde\ATh v }_{\IR^{|\{\bi:|\bi|\leq p-2\}|}} = \norm{\ATh Pv}_{\IR^{|\{\bi:|\bi|\leq p-2\}|}} \lesssim  \norm{h_KPv}_{H^2(\Th)} \leq \norm{h_K v}_{H^2(\Th)}, 
\end{equation}
which shows \cref{ass:local_strong}.

Hence, by the reasoning of \cref{cor:qtput} we can apply \cref{cor:cea} for $\widetilde\ATh$. 
Keeping in mind the consistency error committed by $\widetilde\ATh$ we obtain
\begin{equation*}
    \norm{z-z_h}_{\Vh} \!\lesssim\! \inf_{v_h \in \Vh}\!\norm{z - v_h}_{\widetilde{V_{*h}}} + \norm{\widetilde\ATh z}_{\QThp}.
\end{equation*}
By standard best approximation estimates and \eqref{eq:qth2bound} we can bound 
\begin{equation*}
    \inf_{v_h \in \Vh}\!\norm{z - v_h}_{\widetilde{V_{*h}}} + \norm{\widetilde\ATh z}_{\QThp} \lesssim h \norm{z}_{H^2(\Th)}.
\end{equation*}
which shows \eqref{eq:H-reg-bound}.
Thus, we can apply \cref{th:aubinnitsche}.
\end{proof}
These results for the quasi-Trefftz method have been numerically verified in \cite{2408.00392}.

\section*{Acknowledgements}
This research was funded in part by the Austrian Science Fund (FWF) 
\href{https://doi.org/10.55776/F65}{10.55776/F65} and 
\href{https://doi.org/10.55776/ESP4389824}{10.55776/ESP4389824}.
For open access purposes, the authors have applied a CC BY public copyright license to any author-accepted manuscript version arising from this submission.

\bibliographystyle{abbrvurl}
\bibliography{bib.bib}
\end{document}